\numberwithin{equation}{section}
\theoremstyle{plain}
\newtheorem{C}{}[section] 
\newtheorem{lemma}[C]{Lemma}
\newtheorem{theorem}[C]{Theorem}
\newtheorem{corollary}[C]{Corollary}
\theoremstyle{definition}
\newtheorem{definition}[C]{Definition}
\theoremstyle{remark}
\newtheorem{remark}[C]{Remark}
\newtheorem{example}[C]{Example}
\newcommand{\bfk}{\mathbb{k}}           
\newcommand{\id}{\mathrm{id}}
\newcommand{\op}{\mathrm{op}}
\newcommand{\Hom}{\mathrm{Hom}}
\newcommand{\Nat}{\mathrm{Nat}}
\newcommand{\Fun}{\mathrm{Fun}}
\newcommand{\Rex}{\mathrm{Rex}}
\newcommand{\Proj}{\mathrm{Proj}}
\newcommand{\radj}{\mathrm{r.a.}}     
\newcommand{\rradj}{\mathrm{r.r.a.}}  
\newcommand{\lmod}[1]{{#1}\mathrm{\text{-}mod}}
\newcommand{\rmod}[1]{\mathrm{mod\text{-}}#1}
\newcommand{\bimod}[2]{#1\mathrm{\text{-}mod\text{-}}#2}
\newcommand{\lproj}[1]{{#1}\mathrm{\text{-}proj}}
\newcommand{\unitobj}{\mathbb{1}}
\newcommand{\eval}{\mathrm{ev}}
\newcommand{\coev}{\mathrm{coev}}
\newcommand{\rev}{\mathrm{rev}}
\newcommand{\piv}{\mathrm{piv}}
\newcommand{\DLD}{\mathbb{D}}     
\newcommand{\DRD}{\overline{\DLD}} 
\newcommand{\catactl}{\ogreaterthan} 
\newcommand{\catactr}{\olessthan}    
\newcommand{\Vect}{\mathbf{Vect}}
\newcommand{\vectactl}{\otimes}       
\newcommand{\trace}{\mathtt{t}}
\newcommand{\close}{\mathtt{cl}}
\newcommand{\TwTr}{\mathrm{Tr}}
\newcommand{\Nak}{\mathbb{N}}
\newcommand{\nakac}[1]{\widetilde{#1}}
\newcommand{\HH}{\mathrm{HH}}
\begin{document}

\title{Modified traces and the Nakayama functor}

\author[T. Shibata]{Taiki Shibata}
\address[T. Shibata]{Department of Applied Mathematics,
  Okayama University of Science \\
  1-1 Ridai-cho, Kita-ku Okayama-shi, Okayama 700-0005, Japan.}
\email{shibata@xmath.ous.ac.jp}
\author[K. Shimizu]{Kenichi Shimizu}
\address[K. Shimizu]{Department of Mathematical Sciences,
  Shibaura Institute of Technology \\
  307 Fukasaku, Minuma-ku, Saitama-shi, Saitama 337-8570, Japan.}
\email{kshimizu@shibaura-it.ac.jp}

\date{}

\keywords{finite tensor category \and modified trace \and Nakayama functor \and Hopf algebra \and comodule algebra}
\subjclass{18D10 \and 16T05}

\begin{abstract}
  We organize the modified trace theory with the use of the Nakayama functor of finite abelian categories. For a linear right exact functor $\Sigma$ on a finite abelian category $\mathcal{M}$, we introduce the notion of a $\Sigma$-twisted trace on the class $\mathrm{Proj}(\mathcal{M})$ of projective objects of $\mathcal{M}$.
  In our framework, there is a one-to-one correspondence between the set of $\Sigma$-twisted traces on $\mathrm{Proj}(\mathcal{M})$ and the set of natural transformations from $\Sigma$ to the Nakayama functor of $\mathcal{M}$. Non-degeneracy and compatibility with the module structure (when $\mathcal{M}$ is a module category over a finite tensor category) of a $\Sigma$-twisted trace can be written down in terms of the corresponding natural transformation.
  As an application of this principal, we give existence and uniqueness criteria for modified traces. In particular, a unimodular pivotal finite tensor category admits a non-zero two-sided modified trace if and only if it is spherical. Also, a ribbon finite tensor category admits such a trace if and only if it is unimodular.

\end{abstract}

\maketitle

\tableofcontents

\section{Introduction}

The linear algebraic notion of the trace was extended to pivotal tensor categories and have been used in some constructions of topological invariants in low-dimensional topology. Recently, to explore finer invariants from non-semisimple categories, {\em modified traces} are introduced and investigated in, {\it e.g.}, \cite{2018arXiv180100321B,2018arXiv180901122F,2018arXiv180900499G,MR2803849,MR3068949,MR2480500,MR2998839,MR4079628}. They are not 
defined all of endomorphisms in a tensor category, but only for a morphisms in a specified tensor ideal.

In this paper, we initiate a new approach to the modified trace theory.
We restrict ourselves to the case of {\em finite abelian categories}, {\it i.e.}, $\bfk$-linear categories that are equivalent to the category $\lmod{A}$ of finite-dimensional modules over a finite-dimensional algebra $A$. We establish our framework by using the (right exact) Nakayama functor introduced by Fuchs, Schaumann and Schweigert \cite{MR4042867}. For a finite abelian category $\mathcal{M}$, it is defined by
\begin{equation}
  \label{eq:intro-Nakayama-def}
  \Nak_{\mathcal{M}}: \mathcal{M} \to \mathcal{M}, \quad
  M \mapsto \int^{X \in \mathcal{C}} \Hom_{\mathcal{M}}(M, X)^* \vectactl X
  \quad (M \in \mathcal{M}).
\end{equation}

First of all, we explain where our main idea comes from.
For full generality, we rather consider {\em twisted} traces.
Let $\mathcal{M}$ be a finite abelian category over a field $\bfk$, and let $\Sigma$ be a $\bfk$-linear right exact endofunctor on $\mathcal{M}$.
By a {\em $\Sigma$-twisted trace} on a full subcategory $\mathcal{P} \subset \mathcal{M}$, we mean a family of $\bfk$-linear maps
\begin{equation*}
  \trace_{\bullet} = \{ \trace_{P} : \Hom_{\mathcal{M}}(P, \Sigma(P)) \to \bfk \}_{P \in \mathcal{P}}
\end{equation*}
satisfying the $\Sigma$-cyclicity condition:
\begin{equation*}
  \trace_{P}(g f) = \trace_{Q}(\Sigma(f) g)
  \quad (f : P \to Q, g : Q \to \Sigma(P), P, Q \in \mathcal{P}). 
\end{equation*}
A $\Sigma$-twisted trace $\trace_{\bullet}$ on $\mathcal{P}$ is said to be {\em non-degenerate} if the pairing
\begin{equation*}
  \Hom_{\mathcal{M}}(M, \Sigma(P)) \times \Hom_{\mathcal{M}}(P, M) \to \bfk,
  \quad (f, g) \mapsto \trace_{P}(f g)
\end{equation*}
is non-degenerate for all objects $M \in \mathcal{M}$ and $P \in \mathcal{P}$.

Now we consider the case where $\mathcal{M} = \lmod{A}$ for some finite-dimensional algebra $A$ and $\mathcal{P}$ is the full subcategory of projective objects of $\lmod{A}$. Then, by the Eilenberg-Watts equivalence, we may view $\Sigma$ as a finite-dimensional $A$-bimodule. With the use of the idea of universal traces \cite[Subsection 2.4.6]{2018arXiv180100321B}, we find that the class of $\Sigma$-twisted traces on $\mathcal{P}$ is in bijection with the space
\begin{equation}
  \label{eq:intro-HH0-dual}
  \HH_0(\Sigma)^* := \{ \lambda \in \Hom_{\bfk}(\Sigma, \bfk)
  \mid \text{$\lambda(s a) = \lambda(a s)$ for all $a \in A$ and $s \in \Sigma$} \}.
\end{equation}
The point is that $\HH_0(\Sigma)^*$ is naturally identified with the space of $A$-bimodule homomorphisms from $\Sigma$ to $A^*$ (see Subsection~\ref{subsec:non-degen-twist}). Since $\Nak_{\lmod{A}} \cong A^{*} \otimes_A(-)$ \cite{MR4042867}, this result is interpreted category-theoretically as follows: For a finite abelian category $\mathcal{M}$ and a $\bfk$-linear right exact endofunctor $\Sigma$ on $\mathcal{M}$, there is a bijection
\begin{equation}
  \label{eq:intro-trace-bij}
  \Nat(\Sigma, \Nak_{\mathcal{M}})
  \cong \{ \text{$\Sigma$-twisted traces on $\Proj(\mathcal{M})$} \},
\end{equation}
where $\Proj(\mathcal{M})$ is the full subcategory of projective objects of $\mathcal{M}$.

This observation suggests that a property of a $\Sigma$-twisted trace on $\Proj(\mathcal{M})$ can be described by properties of the corresponding natural transformation.
For instance, we show that a $\Sigma$-twisted trace is non-degenerate if and only if the corresponding natural transformation is invertible (Theorem~\ref{thm:tw-tr-A-mod-non-deg}).

Modified traces are required to be compatible with the tensor product or the action of a tensor category. In our theory, such a compatibility of a trace can be described in terms of the corresponding natural transformation (Theorem \ref{thm:tw-tr-module-compati}).
Unlike the existing theory of modified traces, we work in the {\em non-pivotal} setting.
Let $\mathcal{C}$ be a rigid monoidal category (which is not necessarily pivotal), and let $\mathcal{M}$ be a finite abelian category on which $\mathcal{C}$ acts linearly from the right (see Section~\ref{sec:compati-module} for the precise setting).
Suppose that $\Sigma : \mathcal{M} \to \mathcal{M}$ is a $\bfk$-linear right exact functor equipped with a structure
\begin{equation*}
  \Psi^{\Sigma}_{M,X} : \Sigma(M) \catactr X^{**} \to \Sigma(M \catactr X)
  \quad (M \in \mathcal{M}, X \in \mathcal{C})
\end{equation*}
of a `twisted' right $\mathcal{C}$-module functor (which is defined in a similar way as an ordinary module functor).
We say that a $\Sigma$-twisted trace $\trace_{\bullet}$ on $\Proj(\mathcal{M})$ is {\em compatible with the right $\mathcal{C}$-module structure} if the equation
\begin{equation*}
  \trace_{P}(\close_{P, \Sigma(P)|X}(f)) = \trace_{P \catactr X}(\Psi^{\Sigma}_{P,X} \circ f)
\end{equation*}
holds for all $P \in \Proj(\mathcal{M})$, $X \in \mathcal{C}$ and $f \in \Hom_{\mathcal{M}}(P \catactr X, \Sigma(P) \catactr X^{**})$, where
\begin{equation*}
  \close_{M,N | X} : \Hom_{\mathcal{M}}(M \catactr X, N \catactr X^{**}) \to \Hom_{\mathcal{M}}(M, N)
  \quad (M, N \in \mathcal{M}, X \in \mathcal{C})
\end{equation*}
is the operator defined by `closing' the $X$-strand in string diagrams.
An important fact we shall recall here is that the Nakayama functor $\Nak_{\mathcal{M}}$ also has a structure of a twisted right $\mathcal{C}$-module functor \cite{MR4042867}. Now we can characterize the module compatibility of a twisted trace in terms of the corresponding natural transformation as follows:

\begin{theorem}[{\it cf}. Theorem \ref{thm:tw-tr-module-compati}]
  \label{thm:intro-1}
  Suppose that a $\Sigma$-twisted modified trace $\trace_{\bullet}$ corresponds to a natural transformation $\xi: \Sigma \to \Nak_{\mathcal{M}}$ via the bijection \eqref{eq:intro-trace-bij}. Then $\trace_{\bullet}$ is compatible with the right $\mathcal{C}$-module structure if and only if $\xi$ is a morphism of twisted $\mathcal{C}$-module functors.
\end{theorem}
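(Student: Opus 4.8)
The plan is to make the bijection \eqref{eq:intro-trace-bij} fully explicit, isolate a \emph{universal} $\Nak_{\mathcal M}$-twisted trace, and then reduce the module-compatibility of $\trace_{\bullet}$ to the module-morphism condition on $\xi$ by a diagram chase whose only nonformal input is a property of that universal trace. Under \eqref{eq:intro-trace-bij} the identity natural transformation $\id_{\Nak_{\mathcal M}}$ corresponds to a distinguished $\Nak_{\mathcal M}$-twisted trace $\trace^{\Nak}_{\bullet}$ on $\Proj(\mathcal M)$ --- the ``universal trace'' assembled from the defining coend of $\Nak_{\mathcal M}$ --- and for a general $\xi\colon\Sigma\to\Nak_{\mathcal M}$ the associated trace is recovered by post-composition, $\trace_{P}(f)=\trace^{\Nak}_{P}(\xi_{P}\circ f)$ for $f\in\Hom_{\mathcal M}(P,\Sigma(P))$. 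Two facts about $\trace^{\Nak}_{\bullet}$ are needed. First, since $\id_{\Nak_{\mathcal M}}$ is invertible, Theorem~\ref{thm:tw-tr-A-mod-non-deg} shows $\trace^{\Nak}_{\bullet}$ is non-degenerate: for every object $N$ and every projective $Q$, if $u\in\Hom_{\mathcal M}(N,\Nak_{\mathcal M}(Q))$ satisfies $\trace^{\Nak}_{Q}(u\circ v)=0$ for all $v\in\Hom_{\mathcal M}(Q,N)$, then $u=0$. Second, $\trace^{\Nak}_{\bullet}$ is itself compatible with the right $\mathcal C$-module structure, i.e.
\[
  \trace^{\Nak}_{P}(\close_{P,\Nak_{\mathcal M}(P)|X}(g))=\trace^{\Nak}_{P\catactr X}(\Psi^{\Nak}_{P,X}\circ g)
\]
for all $P\in\Proj(\mathcal M)$, $X\in\mathcal C$, $g\in\Hom_{\mathcal M}(P\catactr X,\Nak_{\mathcal M}(P)\catactr X^{**})$. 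This second point is where essentially all the work sits, and I expect it to be the main obstacle: one must check that the module-functor structure $\Psi^{\Nak}$ of Fuchs--Schaumann--Schweigert \cite{MR4042867} and the universal trace $\trace^{\Nak}$, both built from the coend $\int^{X}\Hom(-,X)^{*}\vectactl X$, interact correctly. I would prove it either by a direct string-diagram manipulation on the coend, or --- following the comodule-algebra approach developed later in the paper --- by transporting everything to $\mathcal M=\lmod{A}$ for a comodule algebra $A$ over a Hopf algebra and carrying out the resulting finite Hopf-algebraic computation.

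Granting that, fix $P\in\Proj(\mathcal M)$ and $X\in\mathcal C$; note $P\catactr X\in\Proj(\mathcal M)$ since $-\catactr X$ is exact with exact right adjoint in the rigid setting. The ``closing'' operator $\close$ is natural in its target object, which yields $\xi_{P}{}_{*}\circ\close_{P,\Sigma(P)|X}=\close_{P,\Nak_{\mathcal M}(P)|X}\circ(\xi_{P}\catactr\id_{X^{**}})_{*}$. Combining this with the two facts above and the formula $\trace_{Q}=\trace^{\Nak}_{Q}\circ(\xi_{Q})_{*}$, the defining equation of module-compatibility of $\trace_{\bullet}$,
\[
  \trace_{P}(\close_{P,\Sigma(P)|X}(f))=\trace_{P\catactr X}(\Psi^{\Sigma}_{P,X}\circ f),
\]
rewrites, for all $f\in\Hom_{\mathcal M}(P\catactr X,\Sigma(P)\catactr X^{**})$, as
\[
  \trace^{\Nak}_{P\catactr X}\bigl((\Psi^{\Nak}_{P,X}\circ(\xi_{P}\catactr\id_{X^{**}}))\circ f\bigr)
  =\trace^{\Nak}_{P\catactr X}\bigl((\xi_{P\catactr X}\circ\Psi^{\Sigma}_{P,X})\circ f\bigr).
\]
Applying the non-degeneracy statement with $N=\Sigma(P)\catactr X^{**}$ and $Q=P\catactr X$, this holds for all $f$ if and only if $\Psi^{\Nak}_{P,X}\circ(\xi_{P}\catactr\id_{X^{**}})=\xi_{P\catactr X}\circ\Psi^{\Sigma}_{P,X}$. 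Since every step of this rewriting is reversible, module-compatibility of $\trace_{\bullet}$ is \emph{equivalent} to the validity of this last identity for all $P\in\Proj(\mathcal M)$ and all $X\in\mathcal C$.

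It remains to upgrade this identity from projectives to arbitrary objects of $\mathcal M$, for the defining condition that $\xi$ be a morphism of twisted $\mathcal C$-module functors is precisely commutativity of $\Psi^{\Nak}_{M,X}\circ(\xi_{M}\catactr\id_{X^{**}})=\xi_{M\catactr X}\circ\Psi^{\Sigma}_{M,X}$ for \emph{all} $M\in\mathcal M$. Here I would use that $\Sigma$ and $\Nak_{\mathcal M}$ are right exact while $-\catactr X$ and $-\catactr X^{**}$ are exact, so that both $M\mapsto\Psi^{\Nak}_{M,X}\circ(\xi_{M}\catactr\id_{X^{**}})$ and $M\mapsto\xi_{M\catactr X}\circ\Psi^{\Sigma}_{M,X}$ are natural transformations between right exact endofunctors of $\mathcal M$; two such transformations agreeing on $\Proj(\mathcal M)$ agree everywhere, via the standard argument applied to a projective presentation $P_{1}\to P_{0}\to M\to 0$. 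Putting the three steps together gives both implications of the theorem. To summarize the difficulty: Steps two and three are formal once the explicit shape of \eqref{eq:intro-trace-bij} and the naturality of $\close$ are recorded, and the genuine content is the compatibility of the universal Nakayama trace with $\Psi^{\Nak}$ asserted in the first paragraph.
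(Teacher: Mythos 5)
Your reduction has the same architecture as the paper's proof of Theorem~\ref{thm:tw-tr-module-compati}: the universal trace $\trace^{\Nak}_{\bullet}$ you isolate is exactly the canonical Nakayama-twisted trace $\nakac{\trace}_{\bullet}$ of Subsection~\ref{subsec:cano-Naka-tw-tr}, the formula $\trace_P(f)=\nakac{\trace}_P(\xi_P\circ f)$ is the paper's map $\Phi_1$, and your final passage from projectives to all of $\mathcal{M}$ (two natural transformations between right exact functors agreeing on $\Proj(\mathcal{M})$ agree everywhere, by Lemma~\ref{lem:restriction-to-proj}) is exactly the paper's last step. Your use of the non-degeneracy of $\nakac{\trace}_{\bullet}$ to convert the equality of traces for all $f$ into the identity $\Psi^{\Nak}_{P,X}\circ(\xi_P\catactr\id_{X^{**}})=\xi_{P\catactr X}\circ\Psi^{\Sigma}_{P,X}$ on projectives is a legitimate shortcut; the paper instead cycles through the auxiliary natural transformations $\mathbb{h}_{\Sigma}\to\mathbb{h}^*$ (the maps $\Phi_2$, $\Phi_3$ and the diagram of Figure~\ref{fig:tw-tr-module-compati-3}), but the content is the same, since $\nakac{\alpha}$ there encodes precisely this non-degeneracy.

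The genuine gap is the statement you yourself flag as carrying all the weight: the module-compatibility of the canonical Nakayama-twisted trace (Lemma~\ref{lem:Naka-tw-mod-lem-2} in the paper). You assert it and offer two strategies, but carry out neither. The second strategy---transporting everything to $\lmod{A}$ for a comodule algebra over a Hopf algebra---cannot work at the stated level of generality: Theorem~\ref{thm:tw-tr-module-compati} is proved for an arbitrary rigid monoidal category $\mathcal{C}$ acting linearly on a finite abelian category $\mathcal{M}$, and with no fiber functor available $\mathcal{C}$ need not be of the form $\lmod{H}$; the Hopf-algebraic picture of Section~\ref{sec:comod-algebras} is a special case of the theorem, not a reduction of it. The first strategy (a direct computation on the coend) is viable and is what the paper does, but it is not a routine manipulation: one must first unwind the definition of $\Psi^{\Nak}$ through the chain of isomorphisms \eqref{eq:Naka-adj-1}--\eqref{eq:Naka-adj-4} and express its composite with the universal dinatural transformation in terms of the closing operator (Lemma~\ref{lem:Naka-tw-mod-lem-1}), and then one needs the fact that $\close_{P,-\,|V}$ is a morphism of canonical $\Vect$-module functors (via the 2-functoriality of Lemma~\ref{lem:cano-Vect}) to make the key cell of the resulting diagram commute. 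Without this lemma your chain of equivalences collapses, so as written the proposal is a correct reduction of the theorem to an unproved---and nontrivial---statement rather than a complete proof.
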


Now we suppose that $\mathcal{C}$ has a pivotal structure.
Then we can regard $\id_{\mathcal{M}}$ as a twisted right $\mathcal{C}$-module functor through the pivotal structure. By a {\em right modified trace} on $\Proj(\mathcal{M})$, we mean a $\Sigma$-twisted trace on $\Proj(\mathcal{M})$ with $\Sigma = \id_{\mathcal{M}}$ that is compatible with the right $\mathcal{C}$-module structure (this definition agrees with those considered in \cite{2018arXiv180100321B,MR2803849,MR3068949,MR2998839} in the case where $\mathcal{M} = \mathcal{C}$).
By this theorem and known results on the Nakayama functor, we obtain some existence and uniqueness results on right modified traces. In particular, we have:

\begin{theorem}[{\it cf}. Theorem~\ref{thm:FTC-mod-tr-1}]
  For a pivotal finite tensor category $\mathcal{C}$,
  a non-zero right modified trace on $\Proj(\mathcal{C})$ exists if and only if $\mathcal{C}$ is unimodular in the sense of \cite{MR2097289}.
  Furthermore, such a trace is unique up to scalar multiple if it exists.
\end{theorem}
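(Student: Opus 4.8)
The plan is to apply the general results above to $\mathcal{M} = \mathcal{C}$ with its regular right $\mathcal{C}$-module structure $M \catactr X = M \otimes X$, and to $\Sigma = \id_{\mathcal{C}}$ with the twisted right $\mathcal{C}$-module functor structure $\id(M) \otimes X^{**} \xrightarrow{\id_M \otimes \piv_X^{-1}} M \otimes X = \id(M \otimes X)$ coming from the pivotal structure $\piv$. By the bijection \eqref{eq:intro-trace-bij} together with Theorem~\ref{thm:tw-tr-module-compati}, the right modified traces on $\Proj(\mathcal{C})$ form a vector space in linear bijection with the space of morphisms $\xi : \id_{\mathcal{C}} \to \Nak_{\mathcal{C}}$ of twisted right $\mathcal{C}$-module functors. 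So it suffices to show that this latter space is nonzero precisely when $\mathcal{C}$ is unimodular, and is then one-dimensional.

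To compute it I would invoke the known description of the Nakayama functor of a finite tensor category (cf. \cite{MR4042867}): there is an isomorphism $\Nak_{\mathcal{C}} \cong \DLD \otimes (-)^{**}$ of twisted right $\mathcal{C}$-module functors, where $\DLD$ is the distinguished invertible object of $\mathcal{C}$ and the pivotal twist is carried entirely by $(-)^{**}$, so that $\DLD \otimes (-)$ is an ordinary $\mathcal{C}$-module endofunctor. Since the pivotal structure is a monoidal isomorphism $\piv : \id_{\mathcal{C}} \to (-)^{**}$, it is in particular an isomorphism of twisted $\mathcal{C}$-module functors; precomposing with $\piv^{-1}$ and then ``cancelling'' the autoequivalence $(-)^{**}$ identifies $\Nat(\id_{\mathcal{C}}, \Nak_{\mathcal{C}})$ in the twisted-module-functor sense with $\Nat(\id_{\mathcal{C}}, \DLD \otimes (-))$ in the ordinary $\mathcal{C}$-module-functor sense.

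The last space is then computed by the usual rigidity of module natural transformations out of the identity functor: evaluation at the unit $\unitobj$ sends $\eta$ to $\eta_{\unitobj} \in \Hom_{\mathcal{C}}(\unitobj, \DLD)$, and the module-functor compatibility forces $\eta_{X}$ to equal, up to the relevant unit and associativity isomorphisms, $\eta_{\unitobj} \otimes \id_{X}$; hence evaluation at $\unitobj$ is a linear isomorphism onto $\Hom_{\mathcal{C}}(\unitobj, \DLD)$. Since $\DLD$ is invertible, hence simple, and $\mathrm{End}_{\mathcal{C}}(\unitobj) = \bfk$, this Hom space is one-dimensional when $\DLD \cong \unitobj$ and zero otherwise. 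As $\mathcal{C}$ is unimodular (in the sense of \cite{MR2097289}) exactly when $\DLD \cong \unitobj$, this yields both the existence criterion and uniqueness up to a scalar. Moreover, when $\DLD \cong \unitobj$ the nonzero $\xi$ is, under the above identifications, a nonzero scalar multiple of $\piv$ and hence invertible, so by Theorem~\ref{thm:tw-tr-A-mod-non-deg} the corresponding modified trace is automatically non-degenerate.

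I expect the main obstacle to be the bookkeeping in the second step: one has to pin down the twisted $\mathcal{C}$-module functor structure on $\Nak_{\mathcal{C}}$ exactly in the form entering Theorem~\ref{thm:tw-tr-module-compati} (including how it interacts with the closing operators $\close_{M,N|X}$), and carefully match it, pivotal twist and all, with the isomorphism $\Nak_{\mathcal{C}} \cong \DLD \otimes (-)^{**}$ from the cited references. Once the two twisted-module structures are aligned, the remaining steps --- transporting along $\piv$, evaluating at $\unitobj$, and using simplicity of $\DLD$ --- are routine.
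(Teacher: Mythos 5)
Your argument is correct and follows essentially the same route as the paper: identify module-compatible traces with morphisms of twisted module functors $\id_{\mathcal{C}} \to \Nak_{\mathcal{C}}$ (Theorem~\ref{thm:tw-tr-module-compati}), use $\Nak_{\mathcal{C}} \cong \alpha_{\mathcal{C}} \otimes (-)^{**}$ together with evaluation at $\unitobj$ (the equivalence $\Rex_{\mathcal{C}}(\mathcal{C},\mathcal{C}_{\DLD}) \approx \mathcal{C}$) to reduce to $\Hom_{\mathcal{C}}(\unitobj, \alpha_{\mathcal{C}})$, and conclude by Schur's lemma, which is exactly the content of the paper's Theorems~\ref{thm:FTC-tw-tr-1} and~\ref{thm:FTC-mod-tr-1}. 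The only caveat is notational: the object $\alpha_{\mathcal{C}}$ is the \emph{dual} of the distinguished invertible object of \cite{MR2097289}, which does not affect the unimodularity criterion or the dimension count.
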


The `if' part of this theorem follows from \cite[Corollary 5.6]{2018arXiv180900499G}.

One can define a left modified trace and a two-sided modified trace for a left $\mathcal{C}$-module category and a $\mathcal{C}$-bimodule category, respectively, in an analogous way as right modified traces. The relation between the Nakayama functor and the categorical Radford $S^4$-formula \cite{MR2097289,MR4042867} yields:

\begin{theorem}
  \label{thm:intro-2}
  For a pivotal finite tensor category $\mathcal{C}$, a non-zero two-sided modified trace on $\Proj(\mathcal{C})$ exists if and only if the pivotal structure of $\mathcal{C}$ is spherical in the sense of \cite{2013arXiv1312.7188D}.
\end{theorem}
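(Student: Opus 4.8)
The plan is to translate the statement into one about natural transformations to the Nakayama functor, and then to resolve it with the categorical Radford $S^{4}$-formula. Throughout, view $\mathcal{C}$ as a $\mathcal{C}$-bimodule category over itself and regard $\id_{\mathcal{C}}$ as a twisted $\mathcal{C}$-bimodule functor by inserting the pivotal structure $\piv$ of $\mathcal{C}$ on each side. Applying the bijection \eqref{eq:intro-trace-bij} to $\Sigma = \id_{\mathcal{C}}$, together with two applications of Theorem~\ref{thm:intro-1} (one for the right action, and the analogous statement for the left action), one identifies the set of two-sided modified traces on $\Proj(\mathcal{C})$ with the set of natural transformations $\xi \colon \id_{\mathcal{C}} \to \Nak_{\mathcal{C}}$ that are simultaneously morphisms of twisted right and of twisted left $\mathcal{C}$-module functors.

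Next I would forget the left structure. By Theorem~\ref{thm:intro-1} and Theorem~\ref{thm:FTC-mod-tr-1}, the space of morphisms of twisted right $\mathcal{C}$-module functors $\id_{\mathcal{C}} \to \Nak_{\mathcal{C}}$ is zero unless $\mathcal{C}$ is unimodular, and is one-dimensional when $\mathcal{C}$ is unimodular; in the unimodular case fix a generator $\xi^{r}$. Hence a non-zero two-sided modified trace on $\Proj(\mathcal{C})$ exists if and only if $\mathcal{C}$ is unimodular and $\xi^{r}$ is in addition a morphism of twisted left $\mathcal{C}$-module functors. The theorem is thereby reduced to the assertion that, for a pivotal finite tensor category $\mathcal{C}$, the category $\mathcal{C}$ is unimodular and $\xi^{r}$ is a morphism of twisted left $\mathcal{C}$-module functors if and only if the pivotal structure of $\mathcal{C}$ is spherical in the sense of \cite{2013arXiv1312.7188D}. (Recall that sphericality in that sense forces unimodularity, so the unimodularity clause is not a genuine extra hypothesis.)

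To establish this assertion I would use the explicit twisted $\mathcal{C}$-bimodule structure of $\Nak_{\mathcal{C}}$ coming from the categorical Radford $S^{4}$-formula \cite{MR2097289,MR4042867}: up to a canonical isomorphism of twisted $\mathcal{C}$-bimodule functors, $\Nak_{\mathcal{C}}$ is the functor $\DLD \otimes (-)$, where $\DLD$ is the distinguished invertible object of $\mathcal{C}$, equipped with the twisted bimodule structure assembled from the canonical monoidal isomorphism $X^{****} \cong \DLD \otimes X \otimes \DLD^{-1}$. When $\mathcal{C}$ is unimodular one has $\DLD \cong \unitobj$, so $\Nak_{\mathcal{C}}$ and $\id_{\mathcal{C}}$ have isomorphic underlying linear functors; their twisted bimodule structures, however, differ in general, since the one transported to $\id_{\mathcal{C}}$ from $\Nak_{\mathcal{C}}$ still records the quadruple dual through the Radford isomorphism, whereas the one built from $\piv$ does not. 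By construction $\xi^{r}$ is the isomorphism intertwining the two \emph{right} module structures; unwinding the condition that $\xi^{r}$ also intertwine the two \emph{left} module structures --- a string-diagram computation with evaluations, coevaluations and $\piv$ --- should produce a single coherence identity, and this identity is precisely the one that \cite{2013arXiv1312.7188D} take as the definition of a spherical pivotal structure.

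I expect this last step --- tracking the several twisted module structures through the Radford isomorphism and matching the resulting coherence condition with the Douglas--Schommer-Pries--Snyder definition --- to be the main obstacle; the remainder is formal. Granting it, combining with the reduction above yields: a non-zero two-sided modified trace on $\Proj(\mathcal{C})$ exists if and only if $\mathcal{C}$ is unimodular and $\xi^{r}$ is a morphism of twisted left $\mathcal{C}$-module functors, which in turn holds if and only if the pivotal structure of $\mathcal{C}$ is spherical; this is the theorem.
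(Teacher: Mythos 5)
Your proposal follows essentially the same route as the paper's proof (Corollary~\ref{cor:FTC-mod-tr-2}): identify two-sided modified traces with natural transformations $\id_{\mathcal{C}}\to\Nak_{\mathcal{C}}$ that are morphisms for both twisted module structures (Theorem~\ref{thm:tw-tr-module-compati} and its left-handed analogue), use unimodularity plus Schur's lemma (Theorems~\ref{thm:tw-tr-indec-exact-mod-cat} and~\ref{thm:FTC-mod-tr-1}) to reduce to the question of whether the essentially unique right-module morphism $\xi^{r}$ is also left-compatible, and then settle that question via the Radford isomorphism. The step you defer as ``the main obstacle'' is exactly the computation the paper carries out, and it is shorter than you fear: the sphericality condition of \cite{2013arXiv1312.7188D}, as used in the paper, is precisely the commutativity of the diagram \eqref{eq:def-sphericity}, i.e.\ $(f\otimes\id_{X^{**}})\circ\mathfrak{p}_X\circ\mathfrak{p}_{{}^{**}\!X}=g_X\circ(\id_{{}^{**}\!X}\otimes f)$ with $g$ the Radford isomorphism; taking the generator in the explicit form $\xi^{r}_X=\Psi^{\Nak}_{\unitobj,X}\circ(f\otimes\mathfrak{p}_X)$ and using that the left twisted structure transported to $\alpha_{\mathcal{C}}\otimes(-)^{**}$ from $\Nak_{\mathcal{C}}$ is $g_X\otimes\id_{Y^{**}}$, the left-intertwining condition specialized at $Y=\unitobj$ is literally that diagram, and conversely the diagram gives intertwining for all $Y$. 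Two small slips worth correcting: $\Nak_{\mathcal{C}}$ is $\alpha_{\mathcal{C}}\otimes(-)^{**}$, not ``$D\otimes(-)$'' (a double dual must appear), and you recycle the symbol $\DLD$, which in the paper denotes the double left dual functor, for the distinguished invertible object; neither affects the substance of the argument.
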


By \cite[Lemma 5.9]{2017arXiv170709691S}, a ribbon finite tensor category is spherical if and only if it is unimodular. For applications in topological field theories, the following result could be important:

\begin{theorem}
  \label{thm:intro-3}
  For a ribbon finite tensor category $\mathcal{C}$, a non-zero two-sided modified trace on $\Proj(\mathcal{C})$ exists if and only if $\mathcal{C}$ is unimodular.
\end{theorem}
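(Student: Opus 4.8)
The plan is to reduce Theorem~\ref{thm:intro-3} to Theorem~\ref{thm:intro-2} together with the cited equivalence from \cite{2017arXiv170709691S}. Let $\mathcal{C}$ be a ribbon finite tensor category. A ribbon structure consists in particular of a pivotal structure on $\mathcal{C}$, so Theorem~\ref{thm:intro-2} applies verbatim: a non-zero two-sided modified trace on $\Proj(\mathcal{C})$ exists if and only if that pivotal structure is spherical in the sense of \cite{2013arXiv1312.7188D}. Thus it suffices to prove that, for the pivotal structure underlying a ribbon structure, sphericity is equivalent to unimodularity.

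First I would recall that one direction is already quoted: by \cite[Lemma 5.9]{2017arXiv170709691S}, a ribbon finite tensor category is spherical if and only if it is unimodular. Strictly speaking that lemma is phrased for the canonical pivotal structure coming from the ribbon element, so a preliminary remark is needed to confirm that the pivotal structure we feed into Theorem~\ref{thm:intro-2} is precisely this one; this is immediate from the definition of a ribbon category (the pivotal structure is determined by the braiding and the twist). Granting this, the chain of equivalences reads: a non-zero two-sided modified trace on $\Proj(\mathcal{C})$ exists $\iff$ the (ribbon) pivotal structure of $\mathcal{C}$ is spherical $\iff$ $\mathcal{C}$ is unimodular, where the first equivalence is Theorem~\ref{thm:intro-2} and the second is \cite[Lemma 5.9]{2017arXiv170709691S}.

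The only real content beyond bookkeeping is the second equivalence, which we are entitled to cite. For completeness one may indicate its mechanism: in a ribbon category the left and right categorical dimensions automatically agree on simple objects because the twist intertwines the two duals, so the failure of sphericity is governed entirely by the distinguished invertible object (equivalently, by the comparison of the pivotal element with the Radford element); this obstruction vanishes exactly when the distinguished invertible object is trivial, i.e.\ when $\mathcal{C}$ is unimodular. No new computation with the Nakayama functor is required here, since all of that work has been absorbed into Theorem~\ref{thm:intro-2}.

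The main obstacle is therefore not mathematical depth but a compatibility check: one must make sure that the notion of \emph{spherical} used in Theorem~\ref{thm:intro-2} (following \cite{2013arXiv1312.7188D}) coincides with the notion used in \cite[Lemma 5.9]{2017arXiv170709691S}, and that both are being applied to the same pivotal structure on $\mathcal{C}$. Once these identifications are made explicit, the proof is a one-line concatenation of the two cited results.
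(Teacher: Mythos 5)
Your proposal is correct and follows essentially the same route as the paper: the paper proves this statement (Corollary~\ref{cor:ribbon-two-sided-trace}) by combining the pivotal criterion of Corollary~\ref{cor:FTC-mod-tr-2} (existence of a two-sided modified trace $\iff$ sphericity) with \cite[Lemma 5.9]{2017arXiv170709691S} (for a ribbon finite tensor category, spherical $\iff$ unimodular), exactly as you do, noting as you did that the pivotal structure in question is the ribbon one and that sphericity here is in the sense of \cite{2013arXiv1312.7188D} (which builds unimodularity into the definition). Your heuristic aside about dimensions is looser than the actual content of the cited lemma, but since you defer to the citation this causes no gap.
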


We have mainly considered the case where $\mathcal{M} = \mathcal{C}$ in the above.
In this paper, we also examine the case where $\mathcal{C} = \lmod{H}$ and $\mathcal{M} = \lmod{A}$ for some finite-dimensional Hopf algebra $H$ and finite-dimensional right $H$-comodule algebra $A$.
In this case, every module-compatible modified trace on $\Proj(\lmod{A})$ comes from a {\em grouplike-cointegral} on $A$ recently introduced in \cite{2018arXiv181007114K} and investigated in \cite{2018arXiv181007114K,2019arXiv190400376S}.
This correspondence extends a relation between modified traces on $\lproj{H}$ and cointegrals on $H$ established in \cite{2018arXiv180100321B}.
We also exhibit some concrete examples.

\begin{remark}
  Schweigert and Woike give a result similar to our main theorem and establish some applications of modified traces in the context of topological field theories in \cite{arXiv:2103.15772}.
\end{remark}

\subsection*{Organization of the paper}

Section~\ref{sec:preliminaries} fixes our convention and provides some basic materials on finite abelian categories. The Nakayama functor will also be introduced.

In Section~\ref{sec:twist-trace-HH},
after introducing fundamental notions in this paper,
we consider the case where $\mathcal{M} = \lmod{A}$ for some finite-dimensional algebra $A$.
Let $\Sigma$ be a $\bfk$-linear right exact endofunctor on $\lmod{A}$ and regard it as an $A$-bimodule through the Eilenberg-Watts equivalence.
By the way of \cite[Subsection 2.4.6]{2018arXiv180100321B}, we construct a family of $\bfk$-linear maps
\begin{equation*}
  \mathtt{T}_{\bullet}
  = \{ \mathtt{T}_{P}: \Hom_{A}(P, \Sigma(P)) \to \HH_0(\Sigma) \}_{P \in \Proj(\lmod{A})}
\end{equation*}
that generalizes the Hattori-Stallings trace. Here, $\HH_0(\Sigma)$ is the 0-th Hochschild homology of the bimodule $\Sigma$.
It is shown that $\mathtt{T}_{\bullet}$ has a certain universal property (Theorem~\ref{thm:initial-twisted-trace}). This gives a bijection between the class of $\Sigma$-twisted traces on $\Proj(\lmod{A})$ and the dual space of $\HH_0(\Sigma)$, which may be identified with the right-hand side of \eqref{eq:intro-HH0-dual} (Corollary \ref{cor:tw-tr-A-mod}). We show that a $\Sigma$-twisted trace $\trace_{\bullet}$ on $\Proj(\lmod{A})$ is non-degenerate if and only if the corresponding linear form $\lambda$ on $\HH_0(\Sigma)$ is non-degenerate in a certain sense (Theorem~\ref{thm:tw-tr-A-mod-non-deg}).

By the discussion of Section~\ref{sec:twist-trace-HH}, we see that there is a bijection \eqref{eq:intro-trace-bij} for every finite abelian category $\mathcal{M}$. The aim of Section~\ref{sec:cat-theo-refo} is to reformulate this bijection in purely category-theoretical terms for the use of later sections.
According to \cite{MR4042867}, the Nakayama functor $\Nak_{\mathcal{M}}$ for $\mathcal{M} = \lmod{A}$ is isomorphic to the functor $A^{*} \otimes_A (-)$. This means that $A^{*} \otimes_A M$ for $M \in \lmod{A}$ has a universal property as a coend as in \eqref{eq:intro-Nakayama-def}. To accomplish the aim of this section, we first write down the universal dinatural transformation for $A^{*} \otimes_A M$ explicitly. We also introduce the {\em canonical Nakayama-twisted trace} $\nakac{\trace}_{\bullet}$ by using the universal property of the Nakayama functor. It turns out that the bijection \eqref{eq:intro-trace-bij} is written in terms of $\nakac{\trace}_{\bullet}$. The inverse of the bijection \eqref{eq:intro-trace-bij} is also described without referencing an algebra $A$ such that $\mathcal{M} \approx \lmod{A}$.

In Section~\ref{sec:compati-module}, we consider the case where $\mathcal{M}$ is a finite abelian category on which a rigid monoidal category $\mathcal{C}$ acts linearly from the right. Suppose that $\Sigma : \mathcal{M} \to \mathcal{M}$ is a $\bfk$-linear right exact right $\mathcal{C}$-module functor from $\mathcal{M}$ to $\mathcal{M}_{\DLD}$, where $(-)_{\DLD}$ means the twist of the action by the double dual functor $\DLD = (-)^{**}$. Then, as in the above, the {\em compatibility} of a $\Sigma$-twisted trace with the right $\mathcal{C}$-module structure is defined. The main result of this section is that a $\Sigma$-twisted trace is module-compatible if and only if the corresponding natural transformation $\Sigma \to \Nak_{\mathcal{M}}$ is a morphism of right $\mathcal{C}$-module functors from $\mathcal{M}$ to $\mathcal{M}_{\DLD}$ (Theorem~\ref{thm:tw-tr-module-compati}).
By the computation using the universal property of the Nakayama functor, 
we see that the canonical Nakayama-twisted trace is module-compatible.
The general case follows from this result.

In Section~\ref{sec:applications-to-ftc}, we apply our results to finite tensor categories and their modules (in this section, the base field $\bfk$ is assumed to be algebraically closed for technical reasons).
Let $\mathcal{C}$ be a finite tensor category, and let $\mathcal{M}$ be a finite right $\mathcal{C}$-module category.
An easy, but important observation is that if $\mathcal{M}$ is indecomposable, then the Nakayama functor $\Nak_{\mathcal{M}}$ is a simple object of the category $\Rex_{\mathcal{C}}(\mathcal{M}, \mathcal{M}_{\DLD})$ of $\bfk$-linear right exact right $\mathcal{C}$-module functors from $\mathcal{M}$ to $\mathcal{M}_{\DLD}$.
Most of `uniqueness' results follow from this observation and Schur's lemma.
According to \cite{MR4042867}, the Nakayama functor of $\mathcal{C}$ is given by $\Nak_{\mathcal{C}}(X) = \alpha_{\mathcal{C}} \otimes X^{**}$ ($X \in \mathcal{C}$), where $\alpha_{\mathcal{C}}$ is the dual of the distinguished invertible object \cite{MR2097289}.
This fact relates the existence of a non-zero modified trace on $\Proj(\mathcal{C})$ and the condition that $\alpha_{\mathcal{C}} \cong \unitobj$ (unimodularity).
Based on these observations, we obtain some results on modified traces as summarized into Theorems~\ref{thm:intro-1}--\ref{thm:intro-3} in the above.

In Section~\ref{sec:comod-algebras}, we examine the case where $\mathcal{C} = \lmod{H}$ and $\mathcal{M} = \lmod{A}$ for some finite-dimensional Hopf algebra $H$ and finite-dimensional right $H$-comodule algebra $A$. If this is the case, the category $\Rex_{\mathcal{C}}(\mathcal{M}, \mathcal{M}_{\DLD})$, to which $\Nak_{\mathcal{M}}$ belongs, can be identified with the category of finite-dimensional $H$-equivariant $A^{(S^2)}$-$A$-bimodules in the sense of \cite{MR2331768}, where $A^{(S^2)}$ is the $H$-comodule algebra obtained from $A$ by twisting its coaction by the square of the antipode of $H$ (Lemma~\ref{lem:H-eq-EW-2}). Under this identification, the Nakayama functor corresponds to the $A$-bimodule $A^*$ with the right $H$-comodule structure given as in \cite{MR3537815} (Theorem~\ref{thm:Nakayama-as-equivariant-bimodule}). From this result, it turns out that twisted module-compatible traces (and in particular modified traces) are given in terms of linear forms on $A$ satisfying an equation similar to the axiom for cointegrals on Hopf algebras (Theorems~\ref{thm:mod-tr-cointegral-1} and~\ref{thm:mod-tr-cointegral-2}).
Finally, as concrete examples, we check the existence of non-zero modified traces for some comodule algebras discussed in Mombelli \cite{MR2678630}.

\subsection*{Acknowledgements}
We are grateful to the referees for careful reading of the manuscript.
The first author (T.S.) is supported by JSPS KAKENHI Grant Number JP19K14517.
The second author (K.S.) is supported by JSPS KAKENHI Grant Number JP20K03520.

\section{Preliminaries}
\label{sec:preliminaries}

\subsection{Monoidal categories}

A {\em monoidal category} \cite[VII.1]{MR1712872} is a category $\mathcal{C}$ equipped with a functor $\otimes : \mathcal{C} \times \mathcal{C} \to \mathcal{C}$ (called the tensor product), an object $\unitobj \in \mathcal{C}$ (called the unit object) and natural isomorphisms
$(X \otimes Y) \otimes Z \cong X \otimes (Y \otimes Z)$ and
$\unitobj \otimes X \cong X \cong X \otimes \unitobj$
($X,Y,Z \in \mathcal{C}$)
satisfying the pentagon and the triangle axioms. By the Mac Lane coherence theorem, we may, and do, assume that every monoidal category is strict.

Let $\mathcal{C}$ and $\mathcal{D}$ be monoidal categories.
A {\em monoidal functor} \cite[XI.2]{MR1712872} from $\mathcal{C}$ to $\mathcal{D}$ is a triple $(F, \varphi, \psi)$ consisting of a functor $F : \mathcal{C} \to \mathcal{D}$, a natural transformation $\varphi_{X,Y} : F(X) \otimes F(Y) \to F(X \otimes Y)$ ($X, Y \in \mathcal{C}$) and a morphism $\psi : \unitobj \to F(\unitobj)$ subject to certain axioms. A monoidal functor $(F, \varphi, \psi)$ is said to {\em strong} ({\it resp}. {\em strict}) if $\varphi$ and $\psi$ are isomorphisms ({\it resp}. identities).

A {\em left dual object} of an object $X$ of a monoidal category $\mathcal{C}$ is a triple $(L, \varepsilon, \eta)$ consisting of an object $L \in \mathcal{C}$ and morphisms $\varepsilon : L \otimes X \to \unitobj$ and $\eta : \unitobj \to X \otimes L$ in $\mathcal{C}$ such that the equations
$(\varepsilon \otimes \id_L) (\id_L \otimes \eta) = \id_L$ and
$(\id_X \otimes \varepsilon) (\eta \otimes \id_X) = \id_X$
hold ({\it cf}. \cite{MR3242743}). A {\em right dual object} of $X$ is a triple $(R, \varepsilon, \eta)$ such that $(X, \varepsilon, \eta)$ is a left dual object of $R$.

A monoidal category is said to be {\em rigid} if every its object has a left dual and a right dual object. Now suppose that $\mathcal{C}$ is a rigid monoidal category. Given an object $X \in \mathcal{C}$, we usually denote by
\begin{equation*}
  (X^*, \eval_X : X^* \otimes X \to \unitobj, \coev_X : \unitobj \to X \otimes X^*)
\end{equation*}
a (fixed) left dual object of $X$. The assignment $X \mapsto X^*$ extends to a strong monoidal functor from $\mathcal{C}^{\op}$ to $\mathcal{C}^{\rev}$, which we call the {\em left duality functor}. Here, $\mathcal{C}^{\rev}$ is the category $\mathcal{C}$ equipped with the reversed tensor product $X \otimes^{\rev} Y = Y \otimes X$.

A right dual object of $X \in \mathcal{C}$ will be denoted by $({}^* \! X, \eval_X', \coev_X')$.
The assignment $X \mapsto {}^*\!X$ also extends to a strong monoidal functor from $\mathcal{C}^{\op}$ to $\mathcal{C}^{\rev}$, which we call the {\em right duality functor}. By replacing $\mathcal{C}$ with an equivalent one, we may assume that the left and the right duality are strict monoidal functors and mutually inverse to each other (see, {\it e.g.}, \cite[Lemma 5.4]{MR3314297}). Thus we have $(X \otimes Y)^* = Y^* \otimes X^*$,
${}^*(X^*) = X = ({}^*\!X)^*$, etc.

\subsection{Module categories}

Let $\mathcal{C}$ be a monoidal category. A {\em right $\mathcal{C}$-module category} \cite{MR3934626,MR3242743,MR2794862} is a category $\mathcal{M}$ equipped with a functor $\catactr : \mathcal{M} \times \mathcal{C} \to \mathcal{M}$, called the action, and natural transformations
\begin{equation}
  \label{eq:mod-cat-associator}
  (M \catactr X) \catactr Y \cong M \catactr (X \otimes Y)
  \quad \text{and} \quad M \catactr \unitobj \cong M
  \quad (M \in \mathcal{M}, X, Y \in \mathcal{C})
\end{equation}
satisfying certain axioms similar to those for a monoidal category.
A right $\mathcal{C}$-module category is said to be {\em strict} if the natural isomorphisms \eqref{eq:mod-cat-associator} are the identity. A {\em left module category} and a {\em bimodule category} are defined analogously.
It is known that an analogue of Mac Lane's strictness theorem for monoidal categories holds for module categories (see \cite[Theorem 1.3.8]{MR2794862}). Thus we may, and do, assume that all module categories are strict in this paper.

Below, following \cite{MR3934626}, we introduce technical terms for action-preserving functors between module categories. We only mention the case of right module categories; see \cite{MR3934626} for details.
Given two right $\mathcal{C}$-module categories $\mathcal{M}$ and $\mathcal{N}$, a {\em lax right $\mathcal{C}$-module functor} from $\mathcal{M}$ to $\mathcal{N}$ is a functor $F : \mathcal{M} \to \mathcal{N}$ equipped with a natural transformation
\begin{equation*}
  \Psi^{F}_{M,X} : F(M) \catactr X \to F(M \catactr X)
  \quad (M \in \mathcal{M}, X \in \mathcal{C})
\end{equation*}
such that the equations
\begin{equation}
  \label{eq:lax-right-C-mod}
  \Psi^{F}_{M, \unitobj} = \id_{F(M)}
  \quad \text{and} \quad
  \Psi^{F}_{M, X \otimes Y} = \Psi^{F}_{M \catactr X, Y} \circ (\Psi^{F}_{M,X} \catactr \id_Y)
\end{equation}
hold for all objects $M \in \mathcal{M}$ and $X, Y \in \mathcal{C}$.
An {\em oplax right $\mathcal{C}$-module functor} from $\mathcal{M}$ to $\mathcal{N}$ is a functor $F : \mathcal{M} \to \mathcal{N}$ equipped with a natural transformation
\begin{equation*}
  F(M \catactr X) \to F(M) \catactr X
  \quad (M \in \mathcal{M}, X \in \mathcal{C})
\end{equation*}
satisfying similar equations as \eqref{eq:lax-right-C-mod}.
We omit the definition of morphisms of (op)lax $\mathcal{C}$-module functors.

A lax or oplax right $\mathcal{C}$-module functor is said to be {\em strong} if its structure morphism is invertible. In this paper, we only consider the case where $\mathcal{C}$ is rigid. If this is the case, then every (op)lax right $\mathcal{C}$-module functor is strong \cite[Lemma 2.10]{MR3934626} and hence we may call them simply a right $\mathcal{C}$-module functor.
Nevertheless, the adjective `op(lax)' is sometimes used to specify the direction of the structure morphism.

\subsection{Finite abelian categories}

Throughout this paper, we work over a fixed field $\bfk$ (this field is arbitrary except in Section~\ref{sec:applications-to-ftc}).
By an algebra, we mean an associative unital algebra over the field $\bfk$.
Given algebras $A$ and $B$, we denote by $\lmod{A}$, $\rmod{B}$ and $\bimod{A}{B}$
the category of finite-dimensional left $A$-modules, 
the category of finite-dimensional right $B$-modules
and the category of finite-dimensional $A$-$B$-bimodules, respectively.

Given a vector space $V$ over $\bfk$, we denote by $V^* := \Hom_{\bfk}(V, \bfk)$ the dual space of $V$.
An arbitrary element of $V^*$ is often written like $v^*$ (this symbol does not mean an element of $V^*$ obtained from an element $v$ by applying some operation $*$).
If $M$ and $N$ are a left and a right module over an algebra $A$, then $M^*$ and $N^*$ are a right and a left $A$-module by the actions defined by
\begin{equation}
  \label{eq:hit-actions}
  \langle m^* \leftharpoonup a, m \rangle = \langle m^*, a m \rangle
  \quad \text{and} \quad
  \langle a \rightharpoonup n^*, n \rangle = \langle n^*, n a \rangle,
\end{equation}
respectively, for $m^* \in M^*$, $m \in M$, $n^* \in N^*$, $n \in N$ and $a \in A$. The assignments $M \mapsto M^*$ and $N \mapsto N^*$ extend to anti-equivalences between $\lmod{A}$ and $\rmod{A}$.

A {\em finite abelian category} \cite{MR3242743,MR3934626} is a $\bfk$-linear category that is equivalent to $\lmod{A}$ for some finite-dimensional algebra $A$. The class of finite abelian categories is closed under taking the opposite category. Indeed, if $\mathcal{M} \approx \lmod{A}$ for some finite-dimensional algebra $A$, then we have $\mathcal{M}^{\op} \approx \rmod{A} \approx \lmod{A^{\op}}$.

Given two finite abelian categories $\mathcal{M}$ and $\mathcal{N}$, we denote by $\Rex(\mathcal{M}, \mathcal{N})$ the category of $\bfk$-linear right exact functors from $\mathcal{M}$ to $\mathcal{N}$. For finite-dimensional algebras $A$ and $B$, there is an equivalence
\begin{equation}
  \label{eq:EW-equivalence}
  \bimod{B}{A} \xrightarrow{\quad \approx \quad} \Rex(\lmod{A}, \lmod{B}),
  \quad M \mapsto M \otimes_{A} (-)
\end{equation}
of $\bfk$-linear categories, which we call {\em the Eilenberg-Watts equivalence}. By using this equivalence, we see that $\Rex(\mathcal{M}, \mathcal{N})$ is also a finite abelian category.

For $a \in A$, we define $r_a : A \to A$ by $r_a(b) = b a$ ($b \in A$).
A quasi-inverse of the equivalence \eqref{eq:EW-equivalence} is given by $F \mapsto F(A)$, where the right $A$-module structure of $F(A) \in \lmod{B}$ is given by $m \cdot a = F(r_a)(m)$ for $m \in F(A)$ and $a \in A$.
This means that an object $F \in \Rex(\lmod{A}, \lmod{B})$ is determined by its restriction to the full subcategory $\{ A \}$ of $\lmod{A}$.
Since $A$ is a projective object of $\lmod{A}$, we may say that $F$ is determined by its restriction to the full subcategory of projective objects of $\lmod{A}$.
For later use, we note this consequence as the following lemma:

\begin{lemma}
  \label{lem:restriction-to-proj}
  For finite abelian categories $\mathcal{M}$ and $\mathcal{N}$, the functor
  \begin{equation*}
    \Rex(\mathcal{M}, \mathcal{N}) \to \Fun_{\bfk}(\Proj(\mathcal{M}), \mathcal{N}),
    \quad F \mapsto F|_{\Proj(\mathcal{M})}
  \end{equation*}
  is fully faithful. Here, $\Proj(\mathcal{M})$ is the full subcategory projective objects of $\mathcal{M}$ and $\Fun_{\bfk}(\mathcal{A}, \mathcal{B})$ means the category of $\bfk$-linear functors from $\mathcal{A}$ to $\mathcal{B}$.
\end{lemma}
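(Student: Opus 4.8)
The plan is to reduce the general statement to the special case $\mathcal{M} = \lmod{A}$, $\mathcal{N} = \lmod{B}$, where it follows from the explicit description of the Eilenberg--Watts equivalence already recorded in the excerpt. First I would fix equivalences $\mathcal{M} \approx \lmod{A}$ and $\mathcal{N} \approx \lmod{B}$ for finite-dimensional algebras $A$ and $B$; since both the source and the target of the functor in question are defined in purely categorical terms and transported along equivalences, it suffices to prove full faithfulness of
\begin{equation*}
  \Rex(\lmod{A}, \lmod{B}) \to \Fun_{\bfk}(\Proj(\lmod{A}), \lmod{B}),
  \quad F \mapsto F|_{\Proj(\lmod{A})}.
\end{equation*}

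Next I would use the discussion preceding the lemma: the Eilenberg--Watts equivalence \eqref{eq:EW-equivalence} has quasi-inverse $F \mapsto F(A)$, so evaluation at the single object $A$ already determines $F$ up to isomorphism, and in fact the functor $\Rex(\lmod{A},\lmod{B}) \to \lmod{B}$, $F \mapsto F(A)$ (with its right $A$-module structure $m \cdot a = F(r_a)(m)$) is itself fully faithful onto $\bimod{B}{A}$, hence in particular faithful and full as a functor into the evaluation data at $A$. I would then observe that $\{A\}$ is a full subcategory of $\Proj(\lmod{A})$, so restriction to $\Proj(\lmod{A})$ factors the evaluation-at-$A$ functor: a natural transformation $\eta : F|_{\Proj} \Rightarrow G|_{\Proj}$ restricts to a morphism $\eta_A : F(A) \to G(A)$ of right $A$-modules (naturality against the maps $r_a$ gives exactly the $A$-linearity), and conversely any $A$-bimodule map $F(A) \to G(A)$ extends (uniquely) to a right exact natural transformation $F \Rightarrow G$ by the Eilenberg--Watts correspondence. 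Faithfulness is then immediate because $\eta$ is already determined by $\eta_A$ (a natural transformation between right exact functors on $\lmod A$ is determined by its component at $A$, as $A$ generates under cokernels), and fullness follows because every $A$-bimodule map lifts.

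The main obstacle, and the one point needing genuine care, is the passage from "a natural transformation defined on $\Proj(\lmod A)$" to "a morphism of $A$-bimodules $F(A)\to G(A)$", i.e. checking that restricting to the full subcategory $\Proj(\lmod A) \supseteq \{A\}$ does not lose the bimodule structure: the left $B$-action on $F(A)$ is automatic from $\bfk$-linearity of $\eta$, while the right $A$-action is encoded precisely in the endomorphisms $r_a \in \End_A(A)$, which lie in $\Proj(\lmod A)$, so naturality of $\eta$ over $\Proj(\lmod A)$ suffices. One should also confirm that an arbitrary $\bfk$-linear functor $\Proj(\lmod A)\to\lmod B$ in the image of the restriction functor, together with a candidate natural transformation, indeed comes from a right exact functor; but this is exactly the content of the remark in the excerpt that $F$ is determined by its restriction to the projectives, combined with \eqref{eq:EW-equivalence}. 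I expect the whole argument to be short once these bookkeeping points are laid out; no hard computation is involved, only careful use of the already-established quasi-inverse $F \mapsto F(A)$.
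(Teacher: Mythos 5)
Your proposal is correct and follows essentially the same route as the paper: the lemma is recorded there as an immediate consequence of the Eilenberg--Watts quasi-inverse $F \mapsto F(A)$ together with the fact that $A \in \Proj(\lmod{A})$, exactly as you argue (including reading off the right $A$-module structure from naturality against the maps $r_a$). The one detail you leave tacit --- that a natural transformation $F|_{\Proj(\lmod{A})} \to G|_{\Proj(\lmod{A})}$ is already determined by its component at $A$, because every projective is a direct summand of some $A^{n}$ and the functors are $\bfk$-linear, which is what guarantees the extended transformation restricts back to the given one --- is routine and no less explicit than the paper's own justification.
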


The functor $M \otimes_{A} (-)$ for $M \in \bimod{B}{A}$ has a right adjoint $\Hom_{B}(M, -)$. By the Eilenberg-Watts equivalence~\eqref{eq:EW-equivalence}, we see that a $\bfk$-linear functor $F : \mathcal{M} \to \mathcal{N}$ has a right adjoint if and only if $F$ is right exact \cite[Corollary 1.9]{MR3934626}. By applying this result to $F^{\op} : \mathcal{M}^{\op} \to \mathcal{N}^{\op}$, we also see that $F$ has a left adjoint if and only if $F$ is left exact.

\subsection{Canonical $\Vect$-action}
\label{subsec:cano-vect-act}

We denote by $\Vect := \lmod{\bfk}$ the category of finite-dimensional vector spaces over $\bfk$. Let $\mathcal{M}$ be a finite abelian category, and let $M$ be an object of $\mathcal{M}$. Since the functor $y_M := \Hom_{\mathcal{M}}(M, -) : \mathcal{M} \to \Vect$ is $\bfk$-linear and left exact, $y_M$ has a left adjoint. We denote the left adjoint of $y_M$ by $V \mapsto V \vectactl M$ ($V \in \Vect$). In other words, $V \vectactl M$ is defined so that there is a natural isomorphism
\begin{equation}
  \label{eq:cano-Vec-act}
  \Hom_{\mathcal{M}}(V \vectactl M, N) \cong \Hom_{\bfk}(V, \Hom_{\mathcal{M}}(M, N))
\end{equation}
for $V \in \Vect$ and $N \in \mathcal{M}$. The assignment $(V, M) \mapsto V \vectactl M$ extends to a functor $\Vect \times \mathcal{M} \to \mathcal{M}$ in such a way that the isomorphism \eqref{eq:cano-Vec-act} is also natural in $M \in \mathcal{M}$. We call this functor the {\em canonical action} of $\Vect$ on $\mathcal{M}$ as it makes $\mathcal{M}$ a left module category over $\Vect$.

Let $F : \mathcal{M} \to \mathcal{N}$ be a $\bfk$-linear functor between finite abelian categories $\mathcal{M}$ and $\mathcal{N}$. By the definition of a $\bfk$-linear functor, for each pair $(M, M')$ of objects of $\mathcal{M}$, there is a $\bfk$-linear map
\begin{equation*}
  F|_{M,M'} : \Hom_{\mathcal{M}}(M, M') \to \Hom_{\mathcal{N}}(F(M), F(M')),
  \quad f \mapsto F(f).
\end{equation*}
For $V \in \Vect$ and $M \in \mathcal{M}$, we define the morphism
\begin{equation}
  \label{eq:cano-Vec-mod-str}
  \Psi^{F}_{V,M} : V \vectactl F(M) \to F(V \vectactl M)
\end{equation}
in $\mathcal{N}$ to be the image of the identity morphism on $V \vectactl M$ under
\begin{equation*}
  \newcommand{\xarr}[1]{\xrightarrow{\makebox[7.3em]{$\scriptstyle{#1}$}}}
  \begin{aligned}
    \Hom_{\mathcal{M}}(V \vectactl M, V \vectactl M)
    & \xarr{\eqref{eq:cano-Vec-act}}
    \Hom_{\bfk}(V, \Hom_{\mathcal{M}}(M, V \vectactl M)) \\
    & \xarr{\Hom_{\bfk}(V, F|_{M, V \vectactl M})}
    \Hom_{\bfk}(V, \Hom_{\mathcal{N}}(F(M), F(V \vectactl M)) \\
    & \xarr{\eqref{eq:cano-Vec-act}}
    \Hom_{\mathcal{M}}(V \vectactl F(M), F(V \vectactl M)).
  \end{aligned}
\end{equation*}
In this way, $F$ becomes a left $\Vect$-module functor by $\Psi^F = \{ \Psi^{F}_{V,M} \}$. This construction is functorial in the following sense:

\begin{lemma}
  \label{lem:cano-Vect}
  The above construction establishes a 2-functor from the 2-category of finite abelian categories, $\bfk$-linear functors and natural transformations to the 2-category of $\Vect$-module categories, $\Vect$-module functors and their morphisms.
\end{lemma}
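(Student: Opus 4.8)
The plan is to verify the data and axioms of a 2-functor directly, reusing the 2-categorical bookkeeping already implicit in the construction of $\Psi^F$. On objects, a finite abelian category $\mathcal{M}$ is sent to $\mathcal{M}$ equipped with the canonical $\Vect$-action of Subsection~\ref{subsec:cano-vect-act}; the fact that this is a genuine left $\Vect$-module category is recorded there. On 1-morphisms, a $\bfk$-linear functor $F : \mathcal{M} \to \mathcal{N}$ is sent to $(F, \Psi^F)$ with $\Psi^F$ as in \eqref{eq:cano-Vec-mod-str}; one must check that $\Psi^F$ satisfies the two module-functor axioms, the unit axiom $\Psi^F_{\bfk, M} = \id_{F(M)}$ (after identifying $\bfk \vectactl M \cong M$) and the associativity/hexagon axiom relating $\Psi^F_{V \otimes W, M}$, $\Psi^F_{V, W \vectactl M}$ and $\Psi^F_{W,M}$. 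On 2-morphisms, a natural transformation $\theta : F \Rightarrow G$ is sent to itself; one must check that $\theta$ is a morphism of $\Vect$-module functors, i.e.\ that $\theta_{V \vectactl M} \circ \Psi^F_{V,M} = \Psi^G_{V,M} \circ (\id_V \vectactl \theta_M)$. Finally one checks that the assignment preserves identities ($\Psi^{\id_{\mathcal{M}}}$ is the canonical identification) and both compositions ($\Psi^{GF}$ agrees with the composite module structure built from $\Psi^G$ and $\Psi^F$, and vertical/horizontal composition of 2-morphisms is preserved, the latter being immediate since 2-morphisms are sent to themselves).

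The uniform tool for all of these verifications is the defining adjunction \eqref{eq:cano-Vec-act}: since $V \vectactl M$ is characterized by a natural isomorphism $\Hom_{\mathcal{M}}(V \vectactl M, N) \cong \Hom_{\bfk}(V, \Hom_{\mathcal{M}}(M,N))$, and $\Psi^F_{V,M}$ is defined as the mate of $\id_{V \vectactl M}$ under the evident composite of such isomorphisms and the action of $F$ on Hom-spaces, each axiom can be translated, via Yoneda, into an identity of $\bfk$-linear maps between Hom-spaces. For instance, to prove the associativity axiom I would apply $\Hom_{\mathcal{N}}(-, N)$ to both composites, use \eqref{eq:cano-Vec-act} twice to rewrite everything on the level of $\Hom_{\bfk}(V \otimes W, \Hom_{\mathcal{M}}(M, N))$, and observe that both sides reduce to the same expression built from the functoriality of $F$ on Hom-spaces together with the (strict) associativity of the canonical $\Vect$-action. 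The unit axiom is similar but shorter, and the statement that $\theta$ is module is obtained by chasing $\id_{V \vectactl M}$ through the two definitions of $\Psi^F$ and $\Psi^G$ and using naturality of $\theta$. Preservation of composition of 1-morphisms is the one place where a small diagram chase is genuinely needed: one writes out $\Psi^{GF}_{V,M}$ as the mate of $\id_{V \vectactl M}$, inserts $\Psi^F_{V,M}$ and then $G(\Psi^F_{V,M})$ appropriately, and compares with $G(\Psi^F_{V,M}) \circ \Psi^G_{V, F(M)}$; this follows once more by the naturality of the adjunction isomorphism \eqref{eq:cano-Vec-act} in the second variable.

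I expect the main (though still routine) obstacle to be organizing the bookkeeping for preservation of 1-morphism composition and for the associativity axiom of $\Psi^F$ cleanly, since each involves composing three or four instances of the adjunction isomorphism \eqref{eq:cano-Vec-act} and keeping track of where $F$ (or $G$) is applied to a Hom-space versus to an object. A conceptually cleaner alternative, which I would mention, is to note that the canonical $\Vect$-action on $\mathcal{M}$ is nothing but the copowering of $\mathcal{M}$ over $\Vect$ coming from its enrichment, and that $\Psi^F$ is exactly the canonical comparison morphism witnessing that every $\bfk$-linear functor is automatically a (co)lax morphism of copowered categories; from this point of view the 2-functoriality is a general fact about enriched categories and copowers, and the only thing specific to the finite abelian setting is the existence of the copower $V \vectactl M$, which is guaranteed by the adjoint functor theorem argument given before the statement. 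Either way, no step requires an idea beyond unwinding definitions and applying \eqref{eq:cano-Vec-act} together with the Yoneda lemma.
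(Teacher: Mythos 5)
Your proposal is correct, but it takes a different route from the paper: the paper gives no hands-on verification at all and simply observes that Lemma~\ref{lem:cano-Vect} is a special case of the general correspondence between categories enriched over a bicategory and categories acted on by that bicategory, citing Gordon--Power's \emph{Enrichment through variation} \cite[Section 3]{MR1466618}. Your main argument -- checking the 2-functor data and axioms directly by repeatedly translating everything through the defining adjunction \eqref{eq:cano-Vec-act} and the Yoneda lemma, with $\Psi^F$ in \eqref{eq:cano-Vec-mod-str} as the mate of the identity -- is sound and would constitute a self-contained proof; the bookkeeping points you flag (associativity of $\Psi^F$ and compatibility with composition of 1-morphisms) are indeed where the work lies, and your closing remark that the canonical $\Vect$-action is the copower coming from the $\bfk$-linear enrichment, so that 2-functoriality is a general fact about copowers, is essentially the paper's argument. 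What the two approaches buy is the usual trade-off: your explicit check keeps the paper self-contained at the cost of several pages of routine mate/adjunction chases, while the citation is one line and places the construction in its natural enriched-category context. One small caveat in your write-up: the canonical $\Vect$-action is defined via adjoints, so its associativity constraint $(V\otimes W)\vectactl M \cong V\vectactl(W\vectactl M)$ is a canonical isomorphism rather than literally strict; either carry this associator through your Hom-space computation or invoke the strictification of module categories that the paper assumes throughout -- neither affects the substance of the argument. Also note that since every object of $\Vect$ is finite-dimensional, $\Psi^F$ is automatically invertible ($V\vectactl M$ is a finite direct sum of copies of $M$ and $F$ is additive), so the module functors produced are strong, not merely lax.
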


This lemma is only a part of the general result on the relation between categories enriched over a bicategory and categories being acted by the bicategory \cite[Section 3]{MR1466618}. For our applications, the above special case is enough.

\begin{example}
  Let $A$ be a finite-dimensional algebra. The canonical $\Vect$-action on $\lmod{A}$ is given by
  $V \vectactl M = V \otimes_{\bfk} M$,
  $a \cdot (v \otimes_{\bfk} m) = v \otimes_{\bfk} a m$
  for $v \in V \in \Vect$, $m \in M \in \lmod{A}$ and $a \in A$.
  For $P \in \lmod{A}$, the canonical $\Vect$-module structure of $\Hom_{A}(P, -) : \lmod{A} \to \Vect$ is given by
  \begin{equation}
    \label{eq:cano-Vect-act-Hom}
    V \otimes_{\bfk} \Hom_{A}(P, M) \to \Hom_{A}(P, V \vectactl M),
    \quad v \otimes f \mapsto
    \Big( p \mapsto v \otimes f(p) \Big).
  \end{equation}
\end{example}

\begin{example}
  Let $A$ and $B$ be finite-dimensional algebras. For $M \in \bimod{B}{A}$, the canonical $\Vect$-module structure of $M \otimes_{A} (-) : \lmod{A} \to \lmod{B}$ is given by
  \begin{equation}
    \label{eq:cano-Vect-act-tensor}
    V \otimes_{\bfk} (M \otimes_A X)
    \to M \otimes_A (V \otimes_{\bfk} X),
    \quad v \otimes_{\bfk} (m \otimes_A x)
    \mapsto m \otimes_A (v \otimes x).
  \end{equation}
\end{example}

\subsection{The Nakayama functor}
\label{subsec:Naka-fun}

We assume that the reader is familiar with dealing with ends and coends \cite[IX]{MR1712872}.
For a finite abelian category $\mathcal{M}$, it can be shown that the coend
\begin{equation}
  \label{eq:Naka-def}
  \Nak_{\mathcal{M}}(M) := \int^{X \in \mathcal{M}} \Hom_{\mathcal{M}}(M, X)^* \vectactl X
\end{equation}
exists for all objects $M \in \mathcal{M}$. The {\em Nakayama functor} \cite[Definition 3.14]{MR4042867} of $\mathcal{M}$ is the $\bfk$-linear right exact endofunctor on $\mathcal{M}$ defined by $M \mapsto \Nak_{\mathcal{M}}(M)$.

\begin{lemma}[{\cite[Lemma 3.15]{MR4042867}}]
  \label{lem:Nakayama-A-mod}
  For a finite-dimensional algebra $A$, we have
  \begin{equation*}
    \Nak_{\lmod{A}} = A^* \otimes_A (-),
  \end{equation*}
  where $A^*$ is made into an $A$-bimodule by the actions $\rightharpoonup$ and $\leftharpoonup$ defined by \eqref{eq:hit-actions}.
\end{lemma}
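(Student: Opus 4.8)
The plan is to evaluate the defining coend \eqref{eq:Naka-def} on the regular module $A$ and to identify the resulting $A$-bimodule. Both $\Nak_{\lmod{A}}$ and $A^* \otimes_A (-)$ are $\bfk$-linear right exact endofunctors of $\lmod{A}$ (right exactness of the former being built into its definition, of the latter being evident), so by the Eilenberg--Watts equivalence \eqref{eq:EW-equivalence} it suffices to exhibit an isomorphism $\Nak_{\lmod{A}}(A) \cong A^*$ of $A$-bimodules, where the right $A$-action on the left-hand side is the one transported along the quasi-inverse $F \mapsto F(A)$, i.e.\ $m \cdot a = \Nak_{\lmod{A}}(r_{a})(m)$, and $A^*$ carries the bimodule structure given by $\rightharpoonup$ and $\leftharpoonup$ as in \eqref{eq:hit-actions}.

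First I would compute $\Nak_{\lmod{A}}(A) = \int^{X} \Hom_{A}(A,X)^* \vectactl X$. Using the natural isomorphism $\Hom_{A}(A,X) \cong X$ (evaluation at $1$) and the fact that $V \vectactl X = V \otimes_{\bfk} X$ in $\lmod{A}$, the integrand is identified with $X^* \otimes_{\bfk} X$, the left $A$-action being carried by the second tensor factor. I claim that $A^*$, together with the family of maps
\begin{equation*}
  i_{X} \colon X^* \otimes_{\bfk} X \to A^*, \qquad i_{X}(x^* \otimes x) = \bigl( a \mapsto \langle x^*, a x \rangle \bigr) \quad (X \in \lmod{A}),
\end{equation*}
is a coend for this integrand. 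A short computation shows that each $i_{X}$ is a morphism in $\lmod{A}$ when $A^*$ carries the $\rightharpoonup$-action, and that $\{ i_{X} \}_{X}$ is dinatural. For the universal property, let $N$ be a left $A$-module and $\{ t_{X} \colon X^* \otimes_{\bfk} X \to N \}_{X}$ a dinatural family in $\lmod{A}$; I would set $h \colon A^* \to N$, $h(a^*) = t_{A}(a^* \otimes 1)$. Since $i_{A}(a^* \otimes 1) = a^*$, the equation $h \circ i_{A} = t_{A}$ forces this definition, giving uniqueness; the $A$-linearity of $h$ follows by applying the dinaturality of $\{ t_{X} \}$ to the morphism $r_{a} \colon A \to A$ together with the identity $r_{a}^* = (a \rightharpoonup -)$ read off from \eqref{eq:hit-actions}; and the relation $h \circ i_{X} = t_{X}$ for every $X$ follows by applying the dinaturality of $\{ i_{X} \}$ and of $\{ t_{X} \}$ to the $A$-module maps $\widehat{x} \colon A \to X$, $b \mapsto b x$, which also yields the identity $i_{X}(x^* \otimes x) = x^* \circ \widehat{x}$. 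This establishes $\Nak_{\lmod{A}}(A) \cong A^*$ as left $A$-modules.

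It remains to recover the right $A$-module structure. By functoriality of the coend, $\Nak_{\lmod{A}}(f)$ for $f \colon M \to M'$ is characterized by $\Nak_{\lmod{A}}(f) \circ i^{M}_{X} = i^{M'}_{X} \circ (\Hom_{A}(f, X)^* \otimes \id_{X})$; taking $M = M' = A$ and $f = r_{a}$, and noting that under $\Hom_{A}(A,X) \cong X$ the operator $\Hom_{A}(r_{a}, X)$ becomes the ($\bfk$-linear, generally not $A$-linear) map $x \mapsto a x$ on $X$, one evaluates at $X = A$ on the elements $a^* \otimes 1$ and uses $i_{A}(a^* \otimes 1) = a^*$ to obtain $\Nak_{\lmod{A}}(r_{a})(a^*) = a^* \leftharpoonup a$. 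Thus the bimodule transported onto $A^*$ is exactly the one described in the statement, and \eqref{eq:EW-equivalence} gives $\Nak_{\lmod{A}} \cong A^* \otimes_A (-)$.

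The argument is essentially a bookkeeping exercise; the only points that require some care are keeping track of the variances in the coend and of which tensor factor of $X^* \otimes_{\bfk} X$ carries the $A$-action, and the observation that it is enough to test the dinatural families on the generator $A$ and on the morphisms $\widehat{x} \colon A \to X$, which is what makes both the universal property and the formula for the right action computable.
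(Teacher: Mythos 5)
Your proof is correct, but it takes a genuinely different route from the one the paper leans on. The paper itself only cites \cite[Lemma 3.15]{MR4042867} for this statement; the closest it comes to a proof is Subsection~\ref{subsec:Naka-revisited}, where the identification is obtained for \emph{all} $M$ at once: one exhibits $A$ as the end $\int_{X} \Hom_{\bfk}(X,X)$ in $\bimod{A}{A}$ via the action maps $\rho_X$, dualizes to get $A^* = \int^X \Hom_{\bfk}(X,X)^*$, applies the colimit-preserving functor $(-)\otimes_A M$, and then rewrites the integrand through the isomorphisms \eqref{eq:Nak-A-mod-lem-2} and \eqref{eq:Nak-A-mod-lem-1}; this simultaneously produces the explicit universal dinatural transformation $\mathbb{i}_{X,M}$ and the identity \eqref{eq:Nak-A-mod-lem-3}, which the paper needs later. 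You instead reduce via the Eilenberg--Watts equivalence \eqref{eq:EW-equivalence} to computing the coend \eqref{eq:Naka-def} only at the regular module $A$, verify the universal property there by hand (your maps $i_X$ are, up to the identification $\Hom_A(A,X)\cong X$, exactly $\mathbb{i}_{X,A}$ as in \eqref{eq:Nak-A-mod-lem-3}), and read off the right action from $\Nak(r_a)$; all the verifications (dinaturality, $A$-linearity of the mediating map via $r_a$, and the reduction to the test morphisms $\widehat{x}\colon A\to X$) are carried out correctly, and the transported bimodule structure is indeed $(\rightharpoonup,\leftharpoonup)$. What your argument buys is economy: one only has to test the coend at the single object $A$. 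What it costs is that it takes as input the existence of the coends and the right exactness of $\Nak_{\lmod{A}}$ (asserted in the paper's preliminaries following \cite{MR4042867}, and legitimate to use, though "built into its definition" glosses over the fact that right exactness is a property rather than a stipulation), and it does not by itself deliver the explicit presentation of $A^*\otimes_A M$ as a coend for general $M$, which is the content Subsection~\ref{subsec:Naka-revisited} extracts and uses in the later computations.
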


This lemma implies that there is a universal dinatural transformation
\begin{equation*}
  \mathbb{i}_{X,M} : \Hom_{A}(M, X)^* \otimes_{\bfk} X \to A^* \otimes_A M
  \quad (X, M \in \lmod{A})
\end{equation*}
making $A^* \otimes_A M$ into a coend of the form~\eqref{eq:Naka-def}. In \cite{MR4042867}, the above lemma is proved rather an indirect way and an explicit form of $\mathbb{i}_{X,M}$ is not mentioned. We will discuss it in Subsection~\ref{subsec:Naka-revisited}.

A {\em Frobenius form} on $A$ is a linear map $\lambda : A \to \bfk$ such that the associated pairing $\langle a, b \rangle := \lambda(a b)$ on $A$ is non-degenerate. Now we suppose that a Frobenius form $\lambda$ on $A$ is given. Then the {\em Nakayama automorphism} (associated to $\lambda$) is defined to be the unique algebra automorphism $\nu$ such that $\langle b, a \rangle = \langle \nu(a), b \rangle$ for all $a, b \in A$. Given a left $A$-module $M$, we denote by ${}_{\nu}M$ the left $A$-module obtained from $M$ by twisting the action of $A$ by $\nu$. There is an isomorphism $\theta : {}_{\nu}A \to A^*$ of $A$-bimodules given by $\theta(a) = \lambda \leftharpoonup a$ ($a \in A$) and thus we have natural isomorphisms
\begin{equation*}
  \Nak_{\lmod{A}}(M) = A^* \otimes_A M \cong {}_{\nu} A \otimes_A M \cong {}_{\nu} M
\end{equation*}
for $M \in \lmod{A}$. This is the reason why the Nakayama functor is called by such a name.

A Frobenius form on $A$ is said to be {\em symmetric} if the associated pairing is. It is well-known that $A$ admits a symmetric Frobenius form if and only if the Nakayama automorphism of $A$ is an inner automorphism. In terms of the Nakayama functor, this is also equivalent to that $\Nak_{\lmod{A}}$ is isomorphic to the identity functor \cite[Proposition 3.24]{MR4042867}.

The Nakayama functor will play a central role in this paper.
An important fact we will use is that the Nakayama functor between module categories has a structure of a `twisted' module functor \cite[Theorem 4.4]{MR4042867}.
This fact will be reviewed in Subsection~\ref{subsec:Naka-tw-module} in detail.

\section{Twisted traces and the Hochschild homology}
\label{sec:twist-trace-HH}

\subsection{Vector-valued twisted traces}

We first introduce the main subject of this paper:

\begin{definition}
  Let $\mathcal{M}$ be a $\bfk$-linear category, let $\mathcal{P}$ be a full subcategory of $\mathcal{M}$, and let $\Sigma : \mathcal{M} \to \mathcal{M}$ be a $\bfk$-linear endofunctor. Given a vector space $V$ over $\bfk$, a {\em $V$-valued $\Sigma$-twisted trace on $\mathcal{P}$} is a family
  \begin{equation*}
    \trace_{\bullet} = \{ \trace_P : \Hom_{\mathcal{M}}(P, \Sigma(P)) \to V \}_{P \in \mathcal{P}}
  \end{equation*}
  of $\bfk$-linear maps parametrized by the objects of $\mathcal{P}$ such that the following $\Sigma$-cyclicity condition is satisfied:
  \begin{description}
  \item [$\Sigma$-cyclicity] For all objects $P, Q \in \mathcal{P}$ and morphisms $f : P \to Q$ and $g : Q \to \Sigma(P)$ in $\mathcal{M}$, the following equation holds:
    \begin{equation}
      \label{eq:Sigma-cyclicity}
      \trace_{P}\Big(
        P \xrightarrow{\ f \ } Q \xrightarrow{\ g \ } \Sigma(P)
      \Big)
      = \trace_{Q}\Big(
        Q \xrightarrow{\ g \ } \Sigma(P) \xrightarrow{\ \Sigma(f) \ } \Sigma(Q)
      \Big).
    \end{equation}
  \end{description}
  If $V = \bfk$, then a $V$-valued $\Sigma$-twisted trace on $\mathcal{P}$ is simply called a {\em $\Sigma$-twisted trace on $\mathcal{P}$}. We say that a $\Sigma$-twisted trace $\trace_{\bullet}$ on $\mathcal{P}$ is {\em non-degenerate} if the pairing
  \begin{equation*}
    \Hom_{\mathcal{M}}(M, \Sigma(P)) \times \Hom_{\mathcal{M}}(P, M) \to \bfk,
    \quad (f, g) \mapsto \trace_{P}(f g)
  \end{equation*}
  is non-degenerate for all objects $M \in \mathcal{M}$ and $P \in \mathcal{P}$.
\end{definition}


Similar notions have been considered in, {\it e.g.}, \cite{2018arXiv180901122F,MR3910068}. Unlike these papers, we do not require that $\Sigma$ is an endofunctor on $\mathcal{P}$. The reason is that the case where $\mathcal{M}$ is a finite abelian category, $\mathcal{P} = \Proj(\mathcal{M})$ and $\Sigma = \Nak_{\mathcal{M}}$ will be especially important in this paper. In this case, the full subcategory $\mathcal{P}$ is not closed under $\Sigma$ in general. Indeed, if $\mathcal{M} = \lmod{A}$ for some finite-dimensional algebra $A$ which is not self-injective, then $A$ belongs to $\Proj(\mathcal{M})$ but $\Nak_{\mathcal{M}}(A) \cong A^*$ does not.

We introduce the following notation:

\begin{definition}
  Let $\mathcal{M}$ be a finite abelian category, and let $\Sigma: \mathcal{M} \to \mathcal{M}$ be a $\bfk$-linear endofunctor on $\mathcal{M}$.
  For a vector space $V$, we denote by $\TwTr(\Sigma, V)$ the class of $V$-valued $\Sigma$-twisted traces on $\Proj(\mathcal{M})$. For the case $V = \bfk$, we write $\TwTr(\Sigma) := \TwTr(\Sigma, \bfk)$.
\end{definition}

\subsection{Dual bases}
\label{subsec:dual-bases}

Let $A$ be a finite-dimensional algebra. We denote by $\lproj{A} := \Proj(\lmod{A})$ the full subcategory of projective objects of $\lmod{A}$. The aim of this section is to classify $\Sigma$-twisted traces on $\lproj{A}$ in the case where $\Sigma = M \otimes_A (-)$ for some $M \in \bimod{A}{A}$. For this purpose, we first recall basic results on finite-dimensional projective modules.

For $M \in \lmod{A}$, we set $M^{\dagger} := \Hom_{A}(M, A)$ and make it a right $A$-module by
\begin{equation*}
  (m^{\dagger} \cdot a)(m) = m^{\dagger}(m) a
  \quad (m^{\dagger} \in M^{\dagger}, m \in M, a \in A).
\end{equation*}

\begin{lemma}[the dual basis lemma; see {\cite[\S2B]{MR1653294}}]
  For every $P \in \lproj{A}$, there are finite number of elements $p_1, \cdots, p_n \in P$ and $p^1, \cdots, p^n \in P^{\dagger}$ such that the equation $p^i(p) \cdot p_i = p$ holds for all elements $p \in P$, where the Einstein convention is used to suppress the sum over $i$.
\end{lemma}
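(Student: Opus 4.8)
The plan is to deduce the statement from the standard characterization of projective modules as direct summands of free modules, using the finiteness hypotheses to keep the index set finite. First I would observe that since $P$ is finite-dimensional over $\bfk$, it is in particular finitely generated as a left $A$-module (any $\bfk$-basis generates it), so there is a surjection $\pi : A^n \twoheadrightarrow P$ of left $A$-modules, e.g.\ with $n = \dim_{\bfk} P$. Since $P \in \lproj{A}$ is projective, the identity $\id_P : P \to P$ lifts along $\pi$ to a left $A$-module homomorphism $\iota : P \to A^n$ with $\pi \circ \iota = \id_P$; this is the point where projectivity together with finiteness is used, and it is the only place requiring a moment of care.

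Next I would read off the dual basis from this splitting. Let $e_1, \dots, e_n \in A^n$ be the standard basis vectors (with $1$ in the $i$-th slot and $0$ elsewhere) and let $\mathrm{pr}_i : A^n \to A$ be the $i$-th coordinate projection, a homomorphism of left $A$-modules. I would then set $p_i := \pi(e_i) \in P$ and $p^i := \mathrm{pr}_i \circ \iota$. Since $p^i$ is a composite of left $A$-module homomorphisms $P \to A^n \to A$, it lies in $\Hom_A(P, A) = P^{\dagger}$, as required.

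Finally I would verify the dual basis identity. Given $p \in P$, write $\iota(p) = \sum_{i} a_i e_i$ with $a_i \in A$; by definition of $\mathrm{pr}_i$ we have $a_i = \mathrm{pr}_i(\iota(p)) = p^i(p)$. Applying the left $A$-linear map $\pi$ and using $\pi \circ \iota = \id_P$ gives
\[
  p = \pi(\iota(p)) = \pi\Big( \sum_i a_i e_i \Big) = \sum_i a_i \, \pi(e_i) = \sum_i p^i(p) \cdot p_i,
\]
which is precisely the claimed equation $p^i(p) \cdot p_i = p$ in the Einstein convention. As noted, there is no substantial obstacle in this argument; the proof is essentially the combination of the lifting property of projectives with the finite generation of $P$ forced by the standing finite-dimensionality assumptions.
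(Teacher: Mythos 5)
Your proof is correct. The paper gives no proof of this lemma, citing it as the standard dual basis lemma from Lam's book; your argument — lifting $\id_P$ along a surjection $A^n \twoheadrightarrow P$ and reading off $p_i = \pi(e_i)$, $p^i = \mathrm{pr}_i \circ \iota$ — is exactly the classical proof in that reference, and your verification of the identity respects the left-module and right-$A$-structure conventions used in the paper.
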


We call such a system $\{ p_i, p^i \}$ a {\em pair of dual bases} for $P \in \lproj{A}$.

For $P, M \in \lmod{A}$, there is a natural transformation $\theta$ defined by
\begin{equation}
  \label{eq:nat-trans-theta}
  \theta_{P,M} : P^{\dagger} \otimes_A M \to \Hom_{A}(P, M),
  \quad \theta_{P,M}(p^{\dagger} \otimes_A m)(p) = p^{\dagger}(p) \cdot m
\end{equation}
for $p^{\dagger} \in P$, $p \in P$ and $m \in M$. If $P \in \lproj{A}$, the map $\theta_{P,M}$ is invertible with the inverse
\begin{equation}
  \label{eq:nat-trans-theta-inv}
  \Hom_{A}(P, M) \to P^{\dagger} \otimes_A M,
  \quad \xi \mapsto p^i \otimes_A \xi(p_i)
  \quad (\xi \in \Hom_{A}(P, M)),
\end{equation}
where $\{ p_i, p^i \}$ is a pair of dual bases for $P$. The following lemma is well-known:

\begin{lemma}
  \label{lem:dual-basis-coev}
  Let $P$ and $Q$ be objects of $\lproj{A}$, and let $\{ p_i, p^i \}$ and $\{ q_j, q^j \}$ be pairs of dual bases for $P$ and $Q$, respectively. For every morphism $f : P \to Q$ of left $A$-modules, we have
  $p^i \otimes_A f(p_i) = (q^j \circ f) \otimes_A q_j$
  in $P^{\dagger} \otimes_A Q$.
\end{lemma}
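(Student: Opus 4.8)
The plan is to identify both sides of the claimed equality with $\theta_{P,Q}^{-1}(f)$, where $\theta_{P,Q} : P^{\dagger} \otimes_A Q \to \Hom_A(P, Q)$ is the natural isomorphism \eqref{eq:nat-trans-theta}. On the one hand, the explicit formula \eqref{eq:nat-trans-theta-inv} for the inverse of $\theta_{P,Q}$ gives directly $\theta_{P,Q}^{-1}(f) = p^i \otimes_A f(p_i)$, which is the left-hand side. On the other hand, I would apply $\theta_{P,Q}$ to the right-hand side $(q^j \circ f) \otimes_A q_j$ and evaluate at an arbitrary $p \in P$: using the defining formula of $\theta_{P,Q}$ this is $(q^j \circ f)(p) \cdot q_j = q^j(f(p)) \cdot q_j$, and by the dual basis property of $\{ q_j, q^j \}$ (applied to the element $f(p) \in Q$) this equals $f(p)$. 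Hence $\theta_{P,Q}\big( (q^j \circ f) \otimes_A q_j \big) = f$, so that $(q^j \circ f) \otimes_A q_j = \theta_{P,Q}^{-1}(f)$ as well, and the two sides agree.

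As an alternative, one can argue entirely inside $P^{\dagger} \otimes_A Q$ without invoking $\theta$. Expanding $f(p_i) = q^j(f(p_i)) \cdot q_j$ via the dual basis property of $Q$ and moving the scalar $q^j(f(p_i)) \in A$ across the tensor product over $A$, one rewrites $p^i \otimes_A f(p_i)$ as $\big( p^i \cdot q^j(f(p_i)) \big) \otimes_A q_j$. It then remains to check the identity $p^i \cdot q^j(f(p_i)) = q^j \circ f$ in $P^{\dagger}$ for each $j$ (summation over $i$ understood); evaluating at $p \in P$, the left-hand side is $p^i(p) \cdot q^j(f(p_i))$, and since $q^j \circ f$ is a morphism of left $A$-modules this equals $q^j\big( f(p^i(p) \cdot p_i) \big) = q^j(f(p))$ by the dual basis property of $P$, which is exactly $(q^j \circ f)(p)$.

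The argument involves no genuine obstacle; the only points requiring care are the bookkeeping of the Einstein summation convention and keeping track of which maps are left $A$-module homomorphisms (namely $f$ and each $q^j$, hence also $q^j \circ f$), so that elements of $A$ may be moved freely across $\otimes_A$ and pulled through these maps. Everything else is a direct consequence of the dual basis lemma and the formulas \eqref{eq:nat-trans-theta} and \eqref{eq:nat-trans-theta-inv} already recorded above.
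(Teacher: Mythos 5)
Your main argument is correct and is essentially the paper's own proof: the paper simply observes that the bijection $\theta_{P,Q}$ sends both sides to $f$, which is exactly what you verify (using \eqref{eq:nat-trans-theta-inv} for the left side and the dual basis property of $Q$ for the right). Your alternative element-wise computation inside $P^{\dagger}\otimes_A Q$ is also valid, but it is just an unwinding of the same fact.
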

\begin{proof}
  The bijection $\theta_{P,Q} : P^{\dagger} \otimes_A Q \to \Hom_A(P, Q)$ maps both sides to $f$.
\end{proof}

\subsection{Twisted traces and the Hochschild homology}
\label{subsec:tw-tr-HH0}

Let $A$ be a finite-\hspace{0pt}dimensional algebra, and let $\Sigma$ be a finite-\hspace{0pt}dimensional $A$-bimodule. By abuse of notation, we denote the endofunctor $\Sigma \otimes_A (-)$ on $\lmod{A}$ by the same symbol $\Sigma$. Thus, $\Sigma(M) = \Sigma \otimes_A M$ and $\Sigma(f) = \id_{\Sigma} \otimes_A f$ for an object $M \in \lmod{A}$ and a morphism $f$ in $\lmod{A}$.

We define the category $\mathfrak{T}$ (just for this subsection) as follows: An object of this category is a pair $(V, \trace_{\bullet})$ consisting of a vector space $V$ and $\trace_{\bullet} \in \TwTr(\Sigma, V)$. A morphism $f : (V, \trace_{\bullet}) \to (V', \trace'_{\bullet})$ in this category is a $\bfk$-linear map $f : V \to V'$ such that the equation $f \circ \trace_{P} = \trace'_{P}$ holds for all objects $P \in \lproj{A}$.

For the classification of vector-valued $\Sigma$-twisted traces on $\lproj{A}$, we construct an initial object of the category $\mathfrak{T}$ as follows: For an $A$-bimodule $M$, the 0-th Hochschild homology of $M$ is the vector space defined by
\begin{equation}
  \label{eq:HH0-def}
  \HH_0(M) := M / \mathrm{span}_{\bfk} \{ a m - m a \mid a \in A, m \in M \}.
\end{equation}
For $M \in \lmod{A}$, there is a well-defined $\bfk$-linear map
\begin{equation}
  \label{eq:HH0-trace-def-1}
  \begin{aligned}
    M^{\dagger} \otimes_A \Sigma(M)
    = M^{\dagger} \otimes_A \Sigma \otimes_A M
    & \to \HH_0(\Sigma), \\
    m^{\dagger} \otimes_A s \otimes_A m
    & \mapsto [m^{\dagger}(m) s],
  \end{aligned}
\end{equation}
where $[x]$ for $x \in \Sigma$ expresses the equivalence class of $x$ in $\HH_0(\Sigma)$. Now, for $P \in \lproj{A}$, we define the $\bfk$-linear map $\mathtt{T}_P$ to be the composition
\begin{equation}
  \label{eq:HH0-trace-def-2}
  \mathtt{T}_P = \Big(
  \Hom_{A}(P, \Sigma(P))
  \xrightarrow{\theta_{P,\Sigma(P)}^{-1}}
  P^{\dagger} \otimes_A \Sigma(P)
  \xrightarrow{\text{\eqref{eq:HH0-trace-def-1} with $M = P$}}
  \HH_0(\Sigma)
  \Big).
\end{equation}
Given a pair of dual bases $\{ p_i, p^i \}$ for $P$, the map $\mathtt{T}_P$ is expressed as
\begin{equation}
  \label{eq:HH0-trace-formula}
  \mathtt{T}_P(f)
  = \Big[ p^i(f_P(p_i)) \cdot f_{\Sigma}(p_i) \Big]
  \quad (f \in \Hom_A(P, \Sigma(P))),
\end{equation}
where $f(p) \in \Sigma \otimes_A P$ for $p \in P$ is expressed symbolically as $f(p) = f_{\Sigma}(p) \otimes_A f_{P}(p)$, although it may not be a single tensor in general. It follows from this formula and Lemma~\ref{lem:dual-basis-coev} that $\mathtt{T}_{\bullet}$ is an $\HH_0(\Sigma)$-valued $\Sigma$-twisted trace on $\lproj{A}$.

The $\Sigma$-twisted trace $\mathtt{T}_{\bullet}$ specializes to the Hattori-Stallings trace \cite{MR175950,MR202807} if $\Sigma = \id_{\mathcal{M}}$. Therefore $\mathtt{T}_{\bullet}$ is called the {\em twisted Hattori-Stallings trace} in \cite[Subsection 2.4.6]{MR3910068}. A universal property of $\mathtt{T}_{\bullet}$ mentioned in \cite{MR3910068} is, in our notation, rephrased as follows:

\begin{theorem}
  \label{thm:initial-twisted-trace}
  The pair $\mathbf{H} := (\HH_0(\Sigma), \mathtt{T}_{\bullet})$ is an initial object of the category $\mathfrak{T}$.
\end{theorem}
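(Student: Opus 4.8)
The plan is to show that $\mathbf{H} = (\HH_0(\Sigma), \mathtt{T}_{\bullet})$ is initial in $\mathfrak{T}$ by producing, for every object $(V, \trace_{\bullet}) \in \mathfrak{T}$, a unique $\bfk$-linear map $\phi : \HH_0(\Sigma) \to V$ with $\phi \circ \mathtt{T}_P = \trace_P$ for all $P \in \lproj{A}$. The key observation is that $A$ itself lies in $\lproj{A}$ and that $\HH_0(\Sigma)$ is the image of $\Sigma = \Hom_A(A, \Sigma(A))$ (via $\theta_{A,\Sigma(A)}^{-1}$, which is essentially the identification $A^{\dagger} \otimes_A \Sigma \otimes_A A \cong \Sigma$) under $\mathtt{T}_A$; in fact $\mathtt{T}_A$ is surjective, since $\{1, \mathrm{id}_A\}$ is a pair of dual bases for $A$ and formula \eqref{eq:HH0-trace-formula} gives $\mathtt{T}_A(f) = [s]$ when $f : A \to \Sigma(A)=\Sigma$ sends $1 \mapsto s$. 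So if $\phi$ exists it is forced: $\phi([s]) = \trace_A(f_s)$ where $f_s : A \to \Sigma$, $1 \mapsto s$. This settles uniqueness immediately.

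For existence, I would first check that the prescription $\phi([s]) := \trace_A(f_s)$ is well-defined, i.e. that $\trace_A(f_s) = 0$ whenever $s = am - ma$ for some $a \in A$ and $m \in \Sigma$. Wait — more precisely $s$ ranges over $\Sigma$ and we must kill the subspace $\mathrm{span}\{as - sa\}$. This is exactly where $\Sigma$-cyclicity enters: take $P = Q = A$, and for $a \in A$ use the morphisms $r_a : A \to A$ (right multiplication, which is a left $A$-module map) and $g : A \to \Sigma$, $1 \mapsto s$; then $\Sigma$-cyclicity \eqref{eq:Sigma-cyclicity} reads $\trace_A(g \circ r_a) = \trace_A(\Sigma(r_a) \circ g)$. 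Computing both sides on $1 \in A$: $g(r_a(1)) = g(a) = a\cdot g(1)$... one must be careful with left vs. right actions here — $g(a) = g(a \cdot 1) = a \cdot s$ if $g$ is a left module map, but $r_a$ is not a left module endomorphism in the naive sense... actually $r_a(b)=ba$ is left-$A$-linear since $r_a(cb) = cba = c\,r_a(b)$, good. So $g\circ r_a$ sends $1 \mapsto g(a) = a\cdot g(1)$? No: $g(r_a(1)) = g(1\cdot a) = g(a)$, and by left-linearity this need not simplify. Instead one uses that $g$ is determined by $g(1)=s$ together with $g(a) = g(a\cdot 1)$; since $a\cdot 1 = a \in A$, and $g$ left-linear means $g(a) = g(a\cdot 1)$, hmm this is circular. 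The cleaner route: $g \circ r_a$ as a left-module map $A \to \Sigma$ is determined by its value at $1$, namely $g(r_a(1)) = g(a)$; and $\Sigma(r_a)\circ g = (\mathrm{id}_\Sigma \otimes_A r_a)\circ g$ sends $1 \mapsto \Sigma(r_a)(s)$ where under $\Sigma = \Sigma\otimes_A A$ this is $s\otimes_A a \mapsto s\otimes_A(1\cdot a)$, i.e. acts as right multiplication by $a$ on $\Sigma$. So the two traces are $\mathtt{T}_A$-images of $[g(a)\cdot]$ giving $[a s]$ versus $[s a]$ — wait, I need to recheck which side; the precise bookkeeping of the $A$-bimodule structure is the one genuinely fiddly point. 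In any case, the upshot is $\trace_A$ annihilates all commutators $as - sa$, so $\phi : \HH_0(\Sigma) \to V$ is well-defined.

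It then remains to verify $\phi \circ \mathtt{T}_P = \trace_P$ for \emph{all} $P \in \lproj{A}$, not just $P = A$. Here I would invoke the $\Sigma$-cyclicity together with the dual basis lemma. Given $P \in \lproj{A}$ with dual bases $\{p_i, p^i\}$ and $f \in \Hom_A(P, \Sigma(P))$, write $f$ in the factored notation $f(p) = f_\Sigma(p)\otimes_A f_P(p)$. The idea is to compare the trace of $f$ on $P$ with a trace on $A$ by cutting $f$ through $A$: for each $i$, the element $p_i \in P$ corresponds to a map $\iota_i : A \to P$, $1 \mapsto p_i$, and $p^i$ corresponds to a map $\pi_i : P \to A$; then $\Sigma$-cyclicity applied to the composite $A \xrightarrow{\iota_i} P \xrightarrow{(\Sigma(\pi_i))\circ f\,?} \dots$ should let me rewrite $\trace_P(f)$ as a sum of terms $\trace_A(\text{something})$, whose $\phi$-preimage is read off from \eqref{eq:HH0-trace-formula}. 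Concretely, \eqref{eq:HH0-trace-formula} already expresses $\mathtt{T}_P(f)$ as a single class $[p^i(f_P(p_i))\cdot f_\Sigma(p_i)]$, and $\phi$ of that class is $\trace_A$ of the corresponding $f_s$; matching this against $\trace_P(f)$ via cyclicity is the content of the verification. I expect the main obstacle to be precisely this last step — carefully threading the dual bases through $\Sigma$-cyclicity and keeping the left/right $A$-module conventions on $\Sigma$ straight throughout (the same conventions that make \eqref{eq:HH0-trace-def-1} well-defined) — rather than anything structurally deep; once the $P=A$ case and well-definedness are in place, the general case is a bookkeeping computation using Lemma~\ref{lem:dual-basis-coev} to move between different choices of dual bases.
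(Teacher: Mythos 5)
Your plan is essentially the paper's proof: the paper packages your $f_s$ and $r_a$ as the isomorphisms $\phi$ and $\phi'$ of \eqref{eq:HH0-initial-pf-iso-1}--\eqref{eq:HH0-initial-pf-iso-2} (and the left/right bookkeeping you worried about resolves exactly as you guessed, via $\phi(s)\circ\phi'(a)=\phi(as)$ and $\Sigma(\phi'(a))\circ\phi(s)=\phi(sa)$, i.e.\ \eqref{eq:HH0-initial-pf-iso-3}), giving uniqueness and well-definedness just as you argue. For general $P$ the paper does the cut-through-$A$ step you sketched: with $\widetilde{p}_i$ as in \eqref{eq:HH0-initial-pf-p-tilde} one has $\id_P=\widetilde{p}_i\circ p^i$, so $\Sigma$-cyclicity yields $\trace_P(f)=\trace_A(\Sigma(p^i)\circ f\circ\widetilde{p}_i)=\lambda(\mathtt{T}_P(f))$, the last equality being exactly the routine comparison with \eqref{eq:HH0-trace-formula} you anticipated.
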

\begin{proof}
  We include a detailed proof since some parts of the proof will be used in later.
  We first introduce the following two isomorphisms of vector spaces:
  \begin{gather}
    \label{eq:HH0-initial-pf-iso-1}
    \phi : \Sigma \to \Hom_{A}(A, \Sigma(A)),
    \quad \phi(s)(a) = a s \otimes_A 1 \quad (s \in \Sigma, a \in A), \\
    \label{eq:HH0-initial-pf-iso-2}
    \phi' : A \to \Hom_{A}(A, A),
    \quad \phi'(a)(b) = b a \quad (a, b \in A).
  \end{gather}
  The following equations are easily verified:
  \begin{equation}
    \label{eq:HH0-initial-pf-iso-3}
    \phi(s) \circ \phi'(a) = \phi(a s),
    \quad (\id_{\Sigma} \otimes_A \phi'(a)) \circ \phi(s) = \phi(s a)
    \quad (s \in \Sigma, a \in A).
  \end{equation}
  By \eqref{eq:HH0-trace-formula} and that $\{ 1_A, \id_A \}$ is a pair of dual bases for $A \in \lproj{A}$, we have $\mathtt{T}_{A}(\phi(s)) = [s]$ for all $s \in \Sigma$. Thus, if $f$ is a morphism from $\mathbf{H}$ to $\mathbf{V} = (V, \trace_{\bullet})$ in $\mathfrak{T}$, then we have
  $f([s]) = f(\mathtt{T}_{A}(\phi(s))) = \trace_{A}(\phi(s))$
  for all $s \in \Sigma$. This means that a morphism $\mathbf{H} \to \mathbf{V}$ in $\mathfrak{T}$ is unique if it exists.

  To complete the proof, we show that there indeed exists a morphism $\mathbf{H} \to \mathbf{V}$ in $\mathfrak{T}$. In view of the above discussion, we define the linear map $\widetilde{\lambda} : \Sigma \to V$ by $\widetilde{\lambda}(s) = \trace_A(\phi(s))$ for $s \in \Sigma$. Then we have
  \begin{equation*}
    \widetilde{\lambda}(a s) 
    \mathop{=}^{\eqref{eq:HH0-initial-pf-iso-3}} \trace_A(\phi(s) \circ \phi'(a))
    \mathop{=}^{\eqref{eq:Sigma-cyclicity}}
    \trace_A(\Sigma(\phi'(a)) \circ \phi(s))
    \mathop{=}^{\eqref{eq:HH0-initial-pf-iso-3}}
    \widetilde{\lambda}(s a)
  \end{equation*}
  for $s \in \Sigma$ and $a \in A$. This means that the linear map
  \begin{equation*}
    \lambda : \HH_0(\Sigma) \to V,
    \quad \lambda([s]) = \trace_A(\phi(s))
    \quad (s \in \Sigma)
  \end{equation*}
  is well-defined. Now let $f : P \to \Sigma(P)$ be a morphism in $\lmod{A}$ with $P \in \lproj{A}$, and let $\{ p_i, p^i \}$ be a pair of dual bases for $P$. For each $i$, we define
  \begin{equation}
    \label{eq:HH0-initial-pf-p-tilde}
    \widetilde{p}_i : A \to P,
    \quad \widetilde{p}_i(a) = a p_i
    \quad (a \in A).
  \end{equation}
  Then the equation $\id_P = \widetilde{p}_i \circ p^i$ holds by the definition of a pair of dual bases (where the Einstein convention is used). Putting $\widetilde{f} := \Sigma(p^i) \circ f \circ \widetilde{p}_i$ and $s := \phi^{-1}(\widetilde{f})$, we compute
  \begin{gather*}
    \trace_P(f)
    = \trace_P(f \circ \widetilde{p}_i \circ p^i)
    \mathop{=}^{\eqref{eq:Sigma-cyclicity}}
    \trace_A(\Sigma(p^i) \circ f \circ \widetilde{p}_i) \\
    = \trace_A(\widetilde{f})
    = \trace_A(\phi(s))
    = \lambda([s])
    \mathop{=}^{\eqref{eq:HH0-trace-formula}} \lambda(\mathtt{T}_{P}(f)).
  \end{gather*}
  This means that $\lambda$ is a morphism $\mathbf{H} \to \mathbf{V}$ in $\mathfrak{T}$. The proof is done.
\end{proof}

The initial object $(\HH_0(\Sigma), \mathtt{T}_{\bullet})$ of Theorem~\ref{thm:initial-twisted-trace} may be called the `universal' vector-valued $\Sigma$-twisted trace on $\lproj{A}$. Indeed, we easily deduce from the theorem that for all vector spaces $V$, the map
\begin{equation}
  \label{eq:V-valued-trace}
  \Hom_{\bfk}(\HH_0(\Sigma), V)
  \to \TwTr(\Sigma, V),
  \quad f \mapsto f \circ \mathtt{T}_{\bullet}
\end{equation}
is bijective. Letting $V = \bfk$, we have:

\begin{corollary}
  \label{cor:tw-tr-A-mod}
  There is a bijection between $\TwTr(\Sigma)$ and $\HH_0(\Sigma)^*$.
\end{corollary}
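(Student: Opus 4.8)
The plan is to obtain the corollary as the specialization $V = \bfk$ of the more general bijection \eqref{eq:V-valued-trace}, which in turn is a formal consequence of Theorem~\ref{thm:initial-twisted-trace}. So the real content is to explain why \eqref{eq:V-valued-trace} holds.

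First I would observe that the assignment $f \mapsto f \circ \mathtt{T}_{\bullet}$ actually lands in $\TwTr(\Sigma, V)$: given a $\bfk$-linear map $f : \HH_0(\Sigma) \to V$, the family $\{ f \circ \mathtt{T}_P \}_{P \in \lproj{A}}$ consists of $\bfk$-linear maps $\Hom_A(P, \Sigma(P)) \to V$, and applying $f$ to both sides of the $\Sigma$-cyclicity equation \eqref{eq:Sigma-cyclicity} satisfied by $\mathtt{T}_{\bullet}$ (verified right after \eqref{eq:HH0-trace-formula}) shows that $f \circ \mathtt{T}_{\bullet}$ again satisfies $\Sigma$-cyclicity. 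Hence the map in \eqref{eq:V-valued-trace} is well-defined. Unwinding the definition of the category $\mathfrak{T}$, a morphism $(\HH_0(\Sigma), \mathtt{T}_{\bullet}) \to (V, \trace_{\bullet})$ is precisely a $\bfk$-linear map $f : \HH_0(\Sigma) \to V$ with $f \circ \mathtt{T}_P = \trace_P$ for every $P \in \lproj{A}$, i.e.\ with $f \circ \mathtt{T}_{\bullet} = \trace_{\bullet}$. Theorem~\ref{thm:initial-twisted-trace} asserts that such an $f$ exists and is unique for every object $(V, \trace_{\bullet})$ of $\mathfrak{T}$; existence is the surjectivity of \eqref{eq:V-valued-trace} and uniqueness is its injectivity. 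This proves \eqref{eq:V-valued-trace}.

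Finally I would set $V = \bfk$. Then $\Hom_{\bfk}(\HH_0(\Sigma), \bfk) = \HH_0(\Sigma)^*$ by definition of the dual space, and $\TwTr(\Sigma, \bfk) = \TwTr(\Sigma)$ by the notational convention fixed earlier, so \eqref{eq:V-valued-trace} becomes the asserted bijection $\HH_0(\Sigma)^* \cong \TwTr(\Sigma)$, given explicitly by $\lambda \mapsto \lambda \circ \mathtt{T}_{\bullet}$. There is essentially no obstacle here: all the substance is already in Theorem~\ref{thm:initial-twisted-trace}, and the only minor point requiring care is the well-definedness check above, namely that post-composing the universal vector-valued twisted trace with a $\bfk$-linear map preserves $\Sigma$-cyclicity.
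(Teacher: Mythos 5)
Your proposal is correct and follows exactly the paper's route: the bijection \eqref{eq:V-valued-trace} is read off from the initiality of $(\HH_0(\Sigma), \mathtt{T}_{\bullet})$ established in Theorem~\ref{thm:initial-twisted-trace} (existence of the morphism in $\mathfrak{T}$ giving surjectivity, uniqueness giving injectivity), and the corollary is the case $V = \bfk$. The only difference is that you make explicit the easy well-definedness check that $f \circ \mathtt{T}_{\bullet}$ again satisfies $\Sigma$-cyclicity, which the paper leaves implicit.
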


\subsection{Non-degeneracy of twisted traces}
\label{subsec:non-degen-twist}

Let $A$ and $\Sigma$ be as in the previous subsection. We discuss which an element of $\HH_0(\Sigma)^*$ corresponds to a non-degenerate $\Sigma$-twisted trace on $\lproj{A}$ under the bijection \eqref{eq:V-valued-trace} with $V = \bfk$. By the definition of the 0-th Hochschild homology, we regard $\HH_0(\Sigma)^*$ as a subspace of $\Sigma^*$ by
\begin{equation*}
  \HH_0(\Sigma)^* = \{ \lambda \in \Sigma^* \mid \text{$\lambda(s a) = \lambda(a s)$ for all $s \in \Sigma$ and $a \in A$} \}.
\end{equation*}
Given $\lambda \in \Sigma^*$, we define the $\bfk$-linear map
\begin{equation}
  \label{eq:lambda-nat}
  \lambda^{\natural} : \Sigma \to A^*,
  \quad \langle \lambda^{\natural}(s), a \rangle = \langle \lambda, a s \rangle
  \quad (s \in \Sigma, a \in A).
\end{equation}
It is easy to verify that the map
\begin{equation}
  \label{eq:HH0-and-bimodules}
  \HH_0(\Sigma)^* \to \Hom_{A|A}(\Sigma, A^*),
  \quad \lambda \mapsto \lambda^{\natural}
\end{equation}
is an isomorphism of vector spaces, where $\Hom_{A|A}$ is the Hom functor of $\bimod{A}{A}$.

\begin{theorem}
  \label{thm:tw-tr-A-mod-non-deg}
  Given $\lambda \in \HH_0(\Sigma)^*$, we denote by $\trace^{\lambda}_{\bullet}$ the $\Sigma$-twisted trace on $\lproj{A}$ corresponding to $\lambda$ through the bijection given by Corollary~\ref{cor:tw-tr-A-mod}. We also define
  \begin{equation*}
    \beta^{\lambda}_{M,P}
    : \Hom_{A}(M, \Sigma(P))
    \times \Hom_{A}(P, M) \to \bfk,
    \quad (f, g) \mapsto \trace^{\lambda}_{P}(f g)
  \end{equation*}
  for $M \in \lmod{A}$ and $P \in \lproj{A}$.
  Then the following assertions are equivalent:
  \begin{itemize}
  \item [(1)] The $\Sigma$-twisted trace $\trace_{\bullet}^{\lambda}$ on $\lproj{A}$ is non-degenerate.
  \item [(2)] The pairing $\beta^{\lambda}_{P,Q}$ is non-degenerate for all $P, Q \in \lproj{A}$.
  \item [(3)] The map $\lambda^{\natural} : \Sigma \to A^*$ is an isomorphism.
  \end{itemize}
  Thus, in particular, a non-degenerate $\Sigma$-twisted trace on $\lproj{A}$ exists only if $\Sigma$ is isomorphic to $A^*$ as an $A$-bimodule.
\end{theorem}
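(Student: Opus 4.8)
The plan is to prove the equivalences through the cycle $(1)\Rightarrow(2)\Rightarrow(3)\Rightarrow(1)$ and then read off the final assertion. The implication $(1)\Rightarrow(2)$ is immediate since $\lproj{A}\subseteq\lmod{A}$. For $(2)\Rightarrow(3)$ I would use only the pairing $\beta^{\lambda}_{A,A}$, which can be made completely explicit: recalling from the proof of Theorem~\ref{thm:initial-twisted-trace} the isomorphisms $\phi\colon\Sigma\xrightarrow{\sim}\Hom_A(A,\Sigma(A))$ and $\phi'\colon A\xrightarrow{\sim}\Hom_A(A,A)$, the identity $\phi(s)\circ\phi'(a)=\phi(as)$ of \eqref{eq:HH0-initial-pf-iso-3}, and that $\trace^{\lambda}_{\bullet}=\lambda\circ\mathtt{T}_{\bullet}$ with $\mathtt{T}_A(\phi(s))=[s]$, one finds that $\beta^{\lambda}_{A,A}$ is transported to the bilinear form $\Sigma\times A\to\bfk$, $(s,a)\mapsto\langle\lambda,as\rangle=\langle\lambda^{\natural}(s),a\rangle$, the last equality being \eqref{eq:lambda-nat}. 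The left radical of this form is $\ker(\lambda^{\natural})$ and its right radical is the annihilator of $\lambda^{\natural}(\Sigma)$ inside $A^{**}=A$; hence it is non-degenerate if and only if $\lambda^{\natural}$ is both injective and surjective, i.e.\ an isomorphism. This proves $(2)\Rightarrow(3)$; note the same computation shows that $(3)$ implies $\beta^{\lambda}_{A,A}$ is non-degenerate.

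For $(3)\Rightarrow(1)$ I would first reduce to $P=A$. Choose $P'$ and $n$ with $P\oplus P'\cong A^{\oplus n}$, with inclusions $\iota_P,\iota_{P'}$ and projections $\pi_P,\pi_{P'}$. Since $\Sigma=\Sigma\otimes_A(-)$ is additive, $\Sigma(P\oplus P')=\Sigma(P)\oplus\Sigma(P')$, and a direct computation using $\Sigma$-cyclicity \eqref{eq:Sigma-cyclicity} along with $\pi_P\iota_P=\id$, $\pi_{P'}\iota_P=0$ (and the images of these identities under $\Sigma$) shows that $\beta^{\lambda}_{M,P\oplus P'}$ is the orthogonal direct sum of $\beta^{\lambda}_{M,P}$ and $\beta^{\lambda}_{M,P'}$, with no off-diagonal terms. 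In particular $\beta^{\lambda}_{M,A^{\oplus n}}$ is the $n$-fold orthogonal sum of $\beta^{\lambda}_{M,A}$. Hence, if $\beta^{\lambda}_{M,A}$ is non-degenerate for every $M\in\lmod{A}$, then so is $\beta^{\lambda}_{M,A^{\oplus n}}$, and therefore so is each of its orthogonal summands, in particular $\beta^{\lambda}_{M,P}$. Thus it remains to prove that $(3)$ implies the non-degeneracy of $\beta^{\lambda}_{M,A}$ for all $M$.

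To prove this, I would identify $\beta^{\lambda}_{M,A}$ with an evaluation pairing. Transporting along the natural isomorphisms $\Hom_A(M,\Sigma(A))\cong\Hom_A(M,\Sigma)$ (induced by $\Sigma\otimes_A A\cong\Sigma$) and $\Hom_A(A,M)\cong M$, $g\mapsto g(1)$, the form $\beta^{\lambda}_{M,A}$ becomes $(\bar f,m)\mapsto\langle\lambda,\bar f(m)\rangle$ with $\bar f\colon M\to\Sigma$ and $m\in M$. Because $\langle\lambda,\bar f(m)\rangle=\langle\lambda^{\natural}(\bar f(m)),1\rangle$ by \eqref{eq:lambda-nat}, postcomposing with the $A$-bimodule map $\lambda^{\natural}\colon\Sigma\to A^{*}$ and then applying the tensor--hom isomorphism $\Hom_A(M,A^{*})\cong M^{*}$, $u\mapsto\big(m\mapsto\langle u(m),1\rangle\big)$, turns $\beta^{\lambda}_{M,A}$ into the canonical pairing $M^{*}\times M\to\bfk$. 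When $(3)$ holds, postcomposition with $\lambda^{\natural}$ is a bijection $\Hom_A(M,\Sigma)\xrightarrow{\sim}\Hom_A(M,A^{*})$, so $\beta^{\lambda}_{M,A}$ is non-degenerate because $M$ is finite-dimensional; this establishes $(3)\Rightarrow(1)$. Finally, for the concluding statement: by Corollary~\ref{cor:tw-tr-A-mod} every $\Sigma$-twisted trace on $\lproj{A}$ has the form $\trace^{\lambda}_{\bullet}$ for a unique $\lambda\in\HH_0(\Sigma)^{*}$, and if it is non-degenerate then $(3)$ shows $\lambda^{\natural}$ is an isomorphism, which by \eqref{eq:HH0-and-bimodules} is an isomorphism $\Sigma\xrightarrow{\sim}A^{*}$ of $A$-bimodules.

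The step I expect to be the main obstacle is the direct-sum reduction in $(3)\Rightarrow(1)$: one must check that the off-diagonal contributions to $\beta^{\lambda}_{M,P\oplus P'}$ vanish, which is exactly where $\Sigma$-cyclicity is invoked and the only point at which the projectivity of $P$ enters. The two identifications of $\beta^{\lambda}$ with explicit bilinear forms are routine but require carefully tracking several natural isomorphisms; it is worth noting that the non-degeneracy at the end comes from the finite-dimensionality of $M$, not of $P$.
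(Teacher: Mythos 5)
Your proposal is correct, but it reaches the hard direction by a genuinely different route than the paper. The paper proves $(3)\Rightarrow(2)$ by explicit dual-basis constructions (given a nonzero $f:P\to\Sigma(Q)$ it picks indices $u,v$ with $\Sigma(q^u)\circ f\circ\widetilde{p}_v\neq 0$ and builds a test morphism $g$ from the isomorphism $\lambda^{\natural}$), and then upgrades $(2)\Rightarrow(1)$ functorially: the pairing defines a natural transformation $\alpha$ from the right exact functor $\Hom_A(-,\Sigma(P))$ to $\Hom_A(P,-)^*$ (both viewed as functors to $\Vect^{\op}$), which is invertible on $\lproj{A}$ by $(2)$ and hence invertible on all of $\lmod{A}$ by Lemma~\ref{lem:restriction-to-proj}. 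You instead prove $(3)\Rightarrow(1)$ directly: split $P$ off a free module $A^{\oplus n}$, use $\Sigma$-cyclicity together with $\pi_{P'}\iota_P=0$ to show $\beta^{\lambda}_{M,-}$ decomposes as an orthogonal block sum (the cross-term computation $\trace_{P'}(\Sigma(\pi_{P'}\iota_P)\circ\cdots)=0$ is exactly right), and then identify $\beta^{\lambda}_{M,A}$ with the evaluation pairing $M^*\times M\to\bfk$ via $\lambda^{\natural}$ and tensor--hom, where non-degeneracy follows from finite-dimensionality of $M$. Your $(2)\Rightarrow(3)$ via $\beta^{\lambda}_{A,A}$ coincides with the paper's. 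What your route buys is elementariness: it avoids both the dual-basis gymnastics and Lemma~\ref{lem:restriction-to-proj}. What the paper's route buys is reusable structure: the natural transformation $\alpha$ appearing in its $(2)\Rightarrow(1)$ step is precisely the map $\Phi_2$ exploited in the categorical reformulation of Section~\ref{sec:cat-theo-refo} and in Theorem~\ref{thm:tw-tr-module-compati}, so the extension-from-projectives mechanism is not incidental but is the template for the later arguments.
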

\begin{proof}
  Let $\phi$ and $\phi'$ be the maps defined by \eqref{eq:HH0-initial-pf-iso-1} and~\eqref{eq:HH0-initial-pf-iso-2}, respectively

  \smallskip \noindent \underline{(2) $\Rightarrow$ (3)}.
  Set $\beta = \beta^{\lambda}_{A,A} \circ (\phi \times \phi')$.
  Since $\{ 1_A, \id_A \}$ is a pair of dual bases of $A$, we have
  \begin{equation}
    \label{eq:tw-tr-A-mod-pairing-formula}
    \beta (s, a)
    = \trace^{\lambda}_A(\phi(s) \circ \phi'(a))
    \mathop{=}^{\eqref{eq:HH0-initial-pf-iso-3}}
    \trace^{\lambda}_A(\phi(a s))
    \mathop{=}^{\eqref{eq:HH0-trace-formula}} \lambda(a s)
    \mathop{=}^{\eqref{eq:lambda-nat}} \langle \lambda^{\natural}(s), a \rangle
  \end{equation}
  for all $s \in \Sigma$ and $a \in A$. Suppose that (2) holds. Then, in particular, the pairing $\beta^{\lambda}_{A,A}$ is non-degenerate. Since $\phi$ and $\phi'$ are isomorphisms, the pairing $\beta : \Sigma \times A \to \bfk$ is also non-degenerate. By \eqref{eq:tw-tr-A-mod-pairing-formula}, the map $\lambda^{\natural} : \Sigma \to A^*$ is an isomorphism. Thus (3) holds.

  \smallskip \noindent
  \underline{(3) $\Rightarrow$ (2)}. We suppose that (3) holds. Let $P$ and $Q$ be objects of $\lproj{A}$, and let $\{ p^i, p_i \}$ and $\{ q_j, q^j \}$ be pairs of dual bases for $P$ and $Q$, respectively. For each $i$, we define $\widetilde{p}_i \in \Hom_A(A, P)$ by~\eqref{eq:HH0-initial-pf-p-tilde}. Then the equation $\widetilde{p}_i \circ p^i = \id_P$ holds (with the Einstein convention). We also define $\widetilde{q}_j \in \Hom_{A}(A, Q)$ in the same way so that the equation $\widetilde{q}_j \circ q^j = \id_Q$ holds.

  We verify the left non-degeneracy of the pairing $\beta^{\lambda}_{P,Q}$. Let $f : P \to \Sigma(Q)$ be a non-zero morphism in $\lmod{A}$. Then there are indices $u$ and $v$ such that $\Sigma(q^u) \circ f \circ \widetilde{p}_v \ne 0$, since, otherwise, we have
  \begin{equation*}
    f = \Sigma(\id_Q) \circ f \circ \id_P
    = \Sigma(\widetilde{q}_j) \circ \Sigma(q^j) \circ f \circ \widetilde{p}_i \circ p^i
    = 0,
  \end{equation*}
  a contradiction. We fix such indices $u$ and $v$ and set $s := \phi^{-1}(\Sigma(q^u) \circ f \circ \widetilde{p}_v)$. Since $s \ne 0$, and since $\lambda^{\natural}$ is assumed to be an isomorphism, there exists an element $a \in A$ such that $\lambda(s a) \ne 0$. By using such $a$, we define $g \in \Hom_A(Q, P)$ by $g := \widetilde{p}_v \circ \phi'(a) \circ q^u$. Then we have
  \begin{gather*}
    \beta^{\lambda}_{P,Q}(f, g)
    = \trace_Q^{\lambda}(f \circ \widetilde{p}_v \circ \phi'(a) \circ q^u)
    \mathop{=}^{\eqref{eq:Sigma-cyclicity}}
    \trace_A^{\lambda}(\Sigma(q^u) \circ f \circ \widetilde{p}_v \circ \phi'(a)) \\
    = \trace_A^{\lambda}(\phi(s) \circ \phi'(a))
    \mathop{=}^{\eqref{eq:tw-tr-A-mod-pairing-formula}} \lambda(a s) \ne 0
  \end{gather*}
  and therefore conclude that $\beta^{\lambda}_{P,Q}$ is left non-degenerate.

  The right non-degeneracy is proved in a similar way: Given a non-zero element $g \in \Hom_A(Q, P)$, we choose indices $u$ and $v$ such that $p^u \circ g \circ \widetilde{q}_v \ne 0$ and set $a := \phi'{}^{-1}(p^u \circ g \circ \widetilde{q}_v)$. Since $a \ne 0$, there is an element $s \in \Sigma$ such that $\lambda(a s) \ne 0$ and set $f := \Sigma(\widetilde{q}_v) \circ \phi(s) \circ p^u$. Then we have
  \begin{gather*}
    \beta^{\lambda}_{P,Q}(f, g)
    = \trace^{\lambda}_Q(\Sigma(\widetilde{q}_v) \circ \phi(s) \circ p^u \circ g)
    \mathop{=}^{\eqref{eq:Sigma-cyclicity}}
    \trace^{\lambda}_A(\phi(s) \circ p^u \circ g \circ \widetilde{q}_v) \\
    = \trace^{\lambda}_A(\phi(s) \circ \phi'(a))
    \mathop{=}^{\eqref{eq:tw-tr-A-mod-pairing-formula}} \lambda(a s) \ne 0
  \end{gather*}
  and therefore conclude that $\beta^{\lambda}_{P,Q}$ is right non-degenerate. Thus (2) holds.

  \smallskip \noindent
  \underline{(1) $\Rightarrow$ (2)}. This implication is trivial from the definition.

  \smallskip \noindent
  \underline{(2) $\Rightarrow$ (1)}. To complete the proof, we show that (2) implies (1).
  We assume that (2) holds (remark that this implies (3) as we have already proved in the above). We fix an object $P \in \lproj{A}$ and consider the following two $\bfk$-linear {\em covariant} functors:
  \begin{equation*}
    h_1, h_2 : \lmod{A} \to \Vect^{\op},
    \quad h_1 := \Hom_{A}(-, \Sigma(P)),
    \quad h_2 := \Hom_{A}(P, -)^*.
  \end{equation*}
  As is well-known, by applying the functor $\Hom_{A}(-, \Sigma(P))$ to a short exact sequence $0 \to X \to Y \to Z \to 0$ in $\lmod{A}$, we obtain an exact sequence
  \begin{equation*}
    0 \to \Hom_{A}(Z, \Sigma(P)) \to \Hom_{A}(Y, \Sigma(P)) \to \Hom_{A}(X, \Sigma(P))
  \end{equation*}
  in $\Vect$. This means that $h_1 : \lmod{A} \to \Vect^{\op}$ is {\em right} exact (as the target category is the opposite category of $\Vect$). The functor $h_2$ is also right exact as $P$ is projective.
  
  For $M \in \lmod{A}$, we define the $\bfk$-linear map $\alpha_M : h_1(M) \to h_2(M)$ by
  \begin{equation*}
    \langle \alpha_M(f), g \rangle = \trace^{\lambda}_{P}(f g)
    \quad (f \in \Hom_{A}(M, \Sigma(P)), g \in \Hom_{A}(P, M)).
  \end{equation*}
  It is easy to see that $\alpha = \{ \alpha_M \}_{M \in \lmod{A}}$ defines a natural transformation from $h_1$ to $h_2$. The assertion (2) is equivalent to that $\alpha$ induces a natural isomorphism between $h_1|_{\lproj{A}}$ and $h_2|_{\lproj{A}}$. Thus, by Lemma~\ref{lem:restriction-to-proj}, $\alpha : h_1 \to h_2$ is a natural isomorphism. This means that (3) holds. The proof is done.
\end{proof}

\section{Category-theoretical reformulation}
\label{sec:cat-theo-refo}

\subsection{The universal property of the Nakayama functor}
\label{subsec:Naka-revisited}

Let $\mathcal{M}$ be a finite abelian category, and let $\Sigma : \mathcal{M} \to \mathcal{M}$ be a $\bfk$-linear right exact endofunctor. We aim to give a `canonical' description of $\TwTr(\Sigma)$ that, unlike Theorem~\ref{thm:tw-tr-A-mod-non-deg}, does not reference a finite-dimensional algebra $A$ such that $\mathcal{M} \approx \lmod{A}$.

To state our result, the Nakayama functor $\Nak := \Nak_{\mathcal{M}}$ is essential. We will, in fact, establish a bijection between the space $\TwTr(\Sigma)$ and the space $\Nat(\Sigma, \Nak_{\mathcal{M}})$ of natural transformations from $\Sigma$ to $\Nak_{\mathcal{M}}$ in terms of the universal property of the Nakayama functor as a coend.

For a while, we consider the case where $\mathcal{M} = \lmod{A}$ for some finite-dimensional algebra $A$. In this case, the Nakayama functor is given by $\Nak_{\lmod{A}} = A^* \otimes_A (-)$. This means that, by the definition of $\Nak_{\mathcal{M}}$ as a coend, there is a universal dinatural transformation
\begin{equation}
  \label{eq:Nak-A-mod-univ}
  \mathbb{i}_{X,M} : \Hom_{A}(M, X)^* \otimes_{\bfk} X \to A^* \otimes_A M
  \quad (M, X \in \lmod{A}).
\end{equation}

For later use, we give an explicit description of the universal dinatural transformation~\eqref{eq:Nak-A-mod-univ}. We begin with the following observation: For each $X \in \lmod{A}$, there is the `action' map
\begin{equation}
  \rho_{X} : A \to \Hom_{\bfk}(X, X),
  \quad \rho_X(a)(x) = a x
  \quad (a \in A, x \in X).
\end{equation}
If we regard $\Hom_{\bfk}(X,X)$ as an $A$-bimodule by
\begin{equation}
  \label{eq:A-actions-on-Hom}
  (a \cdot \xi \cdot b)(x) = a \xi(b x)
  \quad (a, b \in A, \xi \in \Hom_{\bfk}(X,X), x \in X),
\end{equation}
then $\rho_X$ is a morphism of $A$-bimodules. It is easy to see that the family $\rho = \{ \rho_X \}_{X \in \lmod{A}}$ of morphisms of $A$-bimodules makes $A$ the end
\begin{equation*}
  A = \int_{X \in \lmod{A}} \Hom_{\bfk}(X, X)
\end{equation*}
in $\bimod{A}{A}$.

Since the duality functor $(-)^* : \bimod{A}{A} \to \bimod{A}{A}$ is an anti-equivalence of $\bfk$-linear categories, it turns an end into a coend and thus, in particular, we have
\begin{equation*}
  A^* = \int^{X \in \lmod{A}} \Hom_{\bfk}(X, X)^*
\end{equation*}
with the universal dinatural transformation $\rho_X^* : \Hom_{\bfk}(X, X)^* \to A^*$, the linear dual of $\rho_X$. We pick $M \in \lmod{A}$ and apply the functor $(-) \otimes_A M : \bimod{A}{A} \to \lmod{A}$ to the above equation. Since this functor has a right adjoint, it preserves colimits and therefore $A^* \otimes_A M$ is the coend
\begin{equation*}
  A^* \otimes_A M = \int^{X \in \lmod{A}} \Hom_{\bfk}(X, X)^* \otimes_A M
\end{equation*}
in $\lmod{A}$ with the universal dinatural transformation $\rho_X^* \otimes_A \id_M$. Now we discuss the `integrand' of this formula. We first remark:

\begin{lemma}
  For $X, M \in \lmod{A}$, there is an isomorphism
  \begin{equation}
    \label{eq:Nak-A-mod-lem-2}
    \varphi_{X,M} : X^* \otimes_A M \to \Hom_{A}(M, X)^*,
    \quad x^* \otimes_A m \mapsto \Big( f \mapsto \langle x^*, f(m) \rangle \Big)
  \end{equation}
  of vector spaces.
\end{lemma}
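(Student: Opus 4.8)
The plan is to recognize $\varphi_{X,M}$ as a single component of a natural transformation between two right exact functors $\lmod{A}\to\Vect$, and then to reduce the entire statement to the one module $M=A$ by invoking Lemma~\ref{lem:restriction-to-proj}. Concretely, fix $X$ and consider $\Phi_1 := X^*\otimes_A(-)$ and $\Phi_2 := \Hom_A(-,X)^*$ as functors $\lmod{A}\to\Vect$. I would first check that $\varphi_{X,-}$ is a well-defined natural transformation $\Phi_1\Rightarrow\Phi_2$, then that both $\Phi_1$ and $\Phi_2$ are right exact, so that $\varphi_{X,-}$ is a morphism in $\Rex(\lmod{A},\Vect)$; since the restriction functor to $\Fun_{\bfk}(\lproj{A},\Vect)$ is fully faithful (hence reflects isomorphisms), it then suffices to check that $\varphi_{X,P}$ is bijective for projective $P$, and by additivity of $\Phi_1,\Phi_2$ and the fact that every such $P$ is a summand of some $A^n$, only the case $P=A$ remains.

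For the setup, recall from \eqref{eq:hit-actions} that $X^*$ carries the right $A$-action $\langle x^*\leftharpoonup a,x\rangle=\langle x^*,ax\rangle$, so $X^*\otimes_A M$ is defined; well-definedness of $\varphi_{X,M}$ on the tensor product over $A$ follows because, for $f\in\Hom_A(M,X)$ and $a\in A$, one has $\langle x^*\leftharpoonup a,f(m)\rangle=\langle x^*,a\,f(m)\rangle=\langle x^*,f(am)\rangle$ using the $A$-linearity of $f$, and the map is visibly $\bfk$-linear. Naturality in $M$ is immediate: for $h\colon M\to M'$ both composites send $x^*\otimes_A m$ to the functional $f\mapsto\langle x^*,f(h(m))\rangle$. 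As for right exactness, $\Phi_1=X^*\otimes_A(-)$ is right exact because tensoring is, and $\Phi_2$ is right exact by the same two-step reasoning already used in the proof of Theorem~\ref{thm:tw-tr-A-mod-non-deg}: $\Hom_A(-,X)$ carries a right exact sequence to a left exact one, which becomes right exact again after applying the exact contravariant $\bfk$-linear dual.

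It then remains to verify the case $M=A$ directly. The standard isomorphisms $X^*\otimes_A A\xrightarrow{\ \sim\ }X^*$, $x^*\otimes_A a\mapsto x^*\leftharpoonup a$, and $\Hom_A(A,X)\xrightarrow{\ \sim\ }X$, $f\mapsto f(1)$ (whose $\bfk$-linear dual is an isomorphism $X^*\xrightarrow{\ \sim\ }\Hom_A(A,X)^*$) conjugate $\varphi_{X,A}$ into the identity map of $X^*$, so $\varphi_{X,A}$ is an isomorphism, and the reduction above finishes the proof. I expect no genuine obstacle in this argument; the only points that require care are bookkeeping ones — using the correct one-sided module structures on $X^*$ from \eqref{eq:hit-actions} when checking that $\varphi_{X,M}$ descends to $\otimes_A$, and correctly tracking the two contravariant steps that make $\Hom_A(-,X)^*$ right exact (rather than left exact) as a functor into $\Vect$.
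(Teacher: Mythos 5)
Your argument is correct, but it takes a genuinely different route from the paper. The paper's proof is a two-line abstract one: it observes that the composite
$(X^* \otimes_A M)^* \cong \Hom_{A}(M, \Hom_{\bfk}(X^*,\bfk)) \cong \Hom_{A}(M, X)$,
given by the tensor-Hom adjunction and the canonical isomorphism $X^{**}\cong X$ of left $A$-modules, is an isomorphism, and then takes its dual (using finite-dimensionality) to obtain \eqref{eq:Nak-A-mod-lem-2}; this produces the isomorphism for all $M$ at once, with no reduction step, though the reader must unwind the adjunction to see that the resulting map really is the explicit formula $x^*\otimes_A m \mapsto (f\mapsto\langle x^*,f(m)\rangle)$. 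You instead verify the explicit map directly: well-definedness over $\otimes_A$ via the action \eqref{eq:hit-actions}, naturality in $M$, right exactness of both $X^*\otimes_A(-)$ and $\Hom_A(-,X)^*$, and then a reduction — via the fully faithful restriction functor of Lemma~\ref{lem:restriction-to-proj} (which indeed reflects isomorphisms) and additivity, noting every $P\in\lproj{A}$ is a summand of some $A^n$ — to the single case $M=A$, where $\varphi_{X,A}$ is conjugate to $\id_{X^*}$. All the steps check out (in particular the two-step right exactness of $\Hom_A(-,X)^*$ is the same observation used in the proof of Theorem~\ref{thm:tw-tr-A-mod-non-deg}, and $\Vect=\lmod{\bfk}$ is a finite abelian category so Lemma~\ref{lem:restriction-to-proj} applies). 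The trade-off is length versus transparency: the paper's adjunction argument is shorter and needs no exactness or projectivity input, while yours has the advantage of certifying the stated formula itself as the isomorphism and only ever computing on the free module $A$.
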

\begin{proof}
  The desired isomorphism is obtained as the dual of the composition
  \begin{gather*}
    (X^* \otimes_A M)^*
    = \Hom_{\bfk}(X^* \otimes_A M, \bfk)
    \cong \Hom_{A}(M, \Hom_{\bfk}(X^*, \bfk))
    \cong \Hom_{A}(M, X),
  \end{gather*}
  where the first isomorphism is the tensor-Hom adjunction and the second one is induced by the canonical isomorphism $X^{**} \cong X$ of left $A$-modules.
\end{proof}

\begin{lemma}
  For $X, Y \in \lmod{A}$, there is an isomorphism
  \begin{equation}
    \label{eq:Nak-A-mod-lem-1}
    \varphi'_{X,Y} : Y \otimes_{\bfk} X^* \to \Hom_{\bfk}(Y, X)^*,
    \quad y \otimes_{\bfk} x^* \mapsto \Big( f \mapsto \langle x^*, f(y) \rangle \Big)
  \end{equation}
  of $A$-bimodules, where $\Hom_{\bfk}(Y, X)$ is regarded as an $A$-bimodule by the same way as~\eqref{eq:A-actions-on-Hom}.
\end{lemma}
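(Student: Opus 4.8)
The plan is to obtain $\varphi'_{X,Y}$ as the linear dual of a composition of canonical isomorphisms, exactly as in the proof of the preceding lemma, so that bijectivity comes for free and only the bimodule compatibility and the explicit formula need to be checked. Concretely, since $X$ and $Y$ are finite-dimensional there is the standard isomorphism $\Hom_{\bfk}(Y,X) \cong X \otimes_{\bfk} Y^{*}$ sending $x \otimes_{\bfk} y^{*}$ to the map $y \mapsto \langle y^{*}, y\rangle x$; dualizing and using the canonical identifications $Y^{**} \cong Y$ and $X^{*} \otimes_{\bfk} Y \cong Y \otimes_{\bfk} X^{*}$ yields a vector-space isomorphism $Y \otimes_{\bfk} X^{*} \xrightarrow{\;\sim\;} \Hom_{\bfk}(Y,X)^{*}$, and a direct trace-through shows it sends $y \otimes_{\bfk} x^{*}$ to the functional $f \mapsto \langle x^{*}, f(y)\rangle$, i.e.\ it coincides with $\varphi'_{X,Y}$. (Alternatively, injectivity may be checked by hand: if $\sum_{i} y_{i} \otimes_{\bfk} x_{i}^{*}$ with the $y_{i}$ linearly independent is annihilated by $\varphi'_{X,Y}$, evaluating against the rank-one maps $y \mapsto \langle \eta, y\rangle x$ forces each $x_{i}^{*}=0$; a dimension count then gives bijectivity.)

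It then remains to check that $\varphi'_{X,Y}$ intertwines the two $A$-bimodule structures. Here $Y \otimes_{\bfk} X^{*}$ carries the structure with $A$ acting on $Y$ from the left and on $X^{*}$ from the right via \eqref{eq:hit-actions}, while $\Hom_{\bfk}(Y,X)^{*}$ carries the structure dual to \eqref{eq:A-actions-on-Hom}; unwinding the conventions, for $\phi \in \Hom_{\bfk}(Y,X)^{*}$, $a,b \in A$ and $\xi \in \Hom_{\bfk}(Y,X)$ one has $\langle a \rightharpoonup \phi, \xi\rangle = \phi\bigl(y \mapsto \xi(ay)\bigr)$ and $\langle \phi \leftharpoonup b, \xi\rangle = \phi\bigl(y \mapsto b\,\xi(y)\bigr)$. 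The verification is then one line in each variable: for the left action, both $\varphi'_{X,Y}(ay \otimes_{\bfk} x^{*})$ and $a \rightharpoonup \varphi'_{X,Y}(y \otimes_{\bfk} x^{*})$ send $\xi$ to $\langle x^{*}, \xi(ay)\rangle$; for the right action, both $\varphi'_{X,Y}\bigl(y \otimes_{\bfk}(x^{*} \leftharpoonup a)\bigr)$ and $\varphi'_{X,Y}(y \otimes_{\bfk} x^{*}) \leftharpoonup a$ send $\xi$ to $\langle x^{*}, a\,\xi(y)\rangle$, using the defining formula \eqref{eq:hit-actions} for $x^{*} \leftharpoonup a$. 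This shows $\varphi'_{X,Y}$ is an isomorphism of $A$-bimodules.

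There is no genuine obstacle here: the whole argument amounts to correctly propagating the left/right module conventions of \eqref{eq:hit-actions} and \eqref{eq:A-actions-on-Hom} through duals and tensor products. The only point demanding a little care is identifying the bimodule structure induced on $\Hom_{\bfk}(Y,X)^{*}$ from \eqref{eq:A-actions-on-Hom} — namely that the left $A$-action on the dual comes from the right $A$-action on $\Hom_{\bfk}(Y,X)$ and vice versa — after which the two displayed computations close the proof.
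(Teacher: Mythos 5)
Your proof is correct and follows essentially the same route as the paper: the paper gets bijectivity by applying the preceding lemma with $A=\bfk$ (which is exactly the finite-dimensional duality identification you rederive via $\Hom_{\bfk}(Y,X)\cong X\otimes_{\bfk}Y^{*}$ or your rank-one-map argument), and it leaves the $A$-bimodule compatibility as a straightforward check, which is precisely the two one-line computations you carry out, with the correct unwinding of the dual bimodule structure coming from \eqref{eq:hit-actions} and \eqref{eq:A-actions-on-Hom}.
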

\begin{proof}
  The bijectivity of this map follows from the previous lemma applied to $A = \bfk$.
  It is straightforward to check that $\varphi'_{X,Y}$ is a homomorphism of $A$-bimodules. 
\end{proof}

Thus we have a chain of natural isomorphisms
\begin{equation*}
  \newcommand{\xarr}[1]{\xrightarrow{\makebox[2em]{$\scriptstyle{#1}$}}}
  \begin{aligned}
    \Hom_{A}(M, X)^* \otimes_{\bfk} Y
    & \xarr{\eqref{eq:Nak-A-mod-lem-2}}
    (X^* \otimes_A M) \otimes_{\bfk} Y \\
    & \xarr{\cong}
    (Y \otimes_{\bfk} X^*) \otimes_A M
    \xarr{\eqref{eq:Nak-A-mod-lem-1}}
    \Hom_{\bfk}(Y, X)^* \otimes_A M
  \end{aligned}
\end{equation*}
for $X, Y, M \in \lmod{A}$, where the second one just changes the order of tensorands. Now the universal dinatural transformation \eqref{eq:Nak-A-mod-univ} is obtained by composing $\rho_X^* \otimes_A \id_M$ and the above chain of isomorphisms with $X = Y$.
It seems to be difficult to write down the map $\mathbb{i}_{X,M}$ explicitly. Instead of doing so, we provide the following useful identity:

\begin{lemma}
  For $x^* \in X^*$, $x \in X$ and $m \in M$, we have
  \begin{equation}
    \label{eq:Nak-A-mod-lem-3}
    \mathbb{i}_{X,M}(\varphi_{X,M}(x^* \otimes_A m) \otimes_{\bfk} x)
    = \langle x^*, ? x \rangle \otimes_A m,
  \end{equation}
  where $\langle x^*, ? x \rangle \in A^*$ is the linear map defined by $a \mapsto \langle x^*, a x \rangle$.
\end{lemma}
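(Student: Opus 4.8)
The plan is to unwind the construction of $\mathbb{i}_{X,M}$ given just before the lemma and evaluate the resulting composite on the indicated element. By construction (taking $X = Y$ in the chain of natural isomorphisms preceding the statement and then composing with $\rho_X^* \otimes_A \id_M$), the map $\mathbb{i}_{X,M}$ is the composite
\begin{equation*}
  \begin{aligned}
    \Hom_{A}(M, X)^* \otimes_{\bfk} X
    & \xrightarrow{\ \varphi_{X,M}^{-1} \otimes \id_X \ }
    (X^* \otimes_A M) \otimes_{\bfk} X \\
    & \xrightarrow{\quad \cong \quad}
    (X \otimes_{\bfk} X^*) \otimes_A M \\
    & \xrightarrow{\ \varphi'_{X,X} \otimes_A \id_M \ }
    \Hom_{\bfk}(X, X)^* \otimes_A M \\
    & \xrightarrow{\ \rho_X^* \otimes_A \id_M \ }
    A^* \otimes_A M,
  \end{aligned}
\end{equation*}
where the unlabelled arrow merely permutes the tensor factors. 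Since every map in this chain is defined on the whole (relative) tensor product, it suffices to follow a single representative; this is precisely why the lemma is phrased with $\varphi_{X,M}(x^* \otimes_A m)$ as the first tensorand of the input.

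Then I would simply chase the element. The first arrow sends $\varphi_{X,M}(x^* \otimes_A m) \otimes_{\bfk} x$ to $(x^* \otimes_A m) \otimes_{\bfk} x$, which the permutation isomorphism carries to $(x \otimes_{\bfk} x^*) \otimes_A m$. Applying $\varphi'_{X,X} \otimes_A \id_M$ and using formula~\eqref{eq:Nak-A-mod-lem-1}, this becomes $\xi \otimes_A m$, where $\xi \in \Hom_{\bfk}(X,X)^*$ is the functional $f \mapsto \langle x^*, f(x) \rangle$. Finally, $\rho_X^* \otimes_A \id_M$ sends this to $(\xi \circ \rho_X) \otimes_A m$, and for $a \in A$ one has $(\xi \circ \rho_X)(a) = \xi(\rho_X(a)) = \langle x^*, \rho_X(a)(x) \rangle = \langle x^*, a x \rangle$, i.e.\ $\xi \circ \rho_X = \langle x^*, ?\,x \rangle$. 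This yields exactly~\eqref{eq:Nak-A-mod-lem-3}.

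I do not expect any genuine difficulty here: the statement is a bookkeeping identity about a composite of explicitly given isomorphisms. The only points that require mild care are getting the direction of the isomorphisms~\eqref{eq:Nak-A-mod-lem-2} and~\eqref{eq:Nak-A-mod-lem-1} right inside the chain defining $\mathbb{i}_{X,M}$ (the former enters inverted), and checking that each intermediate map is well defined on the relevant quotient so that the element chase on representatives is legitimate --- both of which are immediate from the definitions.
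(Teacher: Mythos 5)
Your proposal is correct and takes essentially the same route as the paper: the paper's proof likewise unwinds the definition of $\mathbb{i}_{X,M}$ as the composite of $\rho_X^* \otimes_A \id_M$ with the chain of isomorphisms (the first entering inverted) and records the one-line element chase $\mathbb{i}_{X,M}(\varphi_{X,M}(x^* \otimes_A m) \otimes_{\bfk} x) = \rho_X^*(\varphi'_{X,X}(x \otimes_{\bfk} x^*)) \otimes_A m = \langle x^*, ?\,x \rangle \otimes_A m$. Your version merely spells out the intermediate steps of that same computation.
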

\begin{proof}
  By the definition of $\mathbb{i}_{X,M}$, we have
  \begin{equation*}
    \mathbb{i}_{X,M}(\varphi_{X,M}(x^* \otimes_A m) \otimes_{\bfk} x)
    = \rho_X^*(\varphi'_{X,X}(x \otimes_{\bfk} x^*)) \otimes_A m
    = \langle x^*, ? x \rangle \otimes_A m.
  \end{equation*}
\end{proof}

\subsection{The canonical Nakayama-twisted trace}
\label{subsec:cano-Naka-tw-tr}

Let $\mathcal{M}$ be a finite abelian category, and let $\Nak := \Nak_{\mathcal{M}}$. We point out that there is a canonical $\Nak$-twisted trace on $\Proj(\mathcal{M})$, which is `universal' in a different meaning as the universal vector-valued twisted trace mentioned in Subsection~\ref{subsec:tw-tr-HH0}.

We first recall from Subsection~\ref{subsec:cano-vect-act} that there is a canonical isomorphism
\begin{equation}
  \label{eq:cano-Vect-act-Hom-2}
  V \vectactl \Hom_{\mathcal{M}}(M, M')
  \xrightarrow{\ \cong \ } \Hom_{\mathcal{M}}(M, V \vectactl M')
  \quad (V \in \Vect, M, M' \in \mathcal{M})
\end{equation}
as the functor $\Hom_{\mathcal{M}}(M, -)$ is $\bfk$-linear. Let $\mathbb{i}_{X,M} : \Hom_{\mathcal{M}}(M, X)^* \vectactl X \to \Nak(M)$ ($M, X \in \mathcal{M}$) be the universal dinatural transformation for the coend $\Nak(M)$. If $P$ is a projective object of $\mathcal{M}$, then the functor $\Hom_{\mathcal{M}}(P, -)$ preserves colimits and, in particular, we have
\begin{equation*}
  \Hom_{\mathcal{M}}(P, \Nak(P))
  = \int^{X \in \mathcal{M}} \Hom_{\mathcal{M}}(P, \Hom_{\mathcal{M}}(P, X)^* \vectactl X)
\end{equation*}
with the universal dinatural transformation $\Hom_{\mathcal{M}}(P, \mathbb{i}_{X,P})$. Now we define
\begin{equation*}
  \nakac{\trace}_{\bullet} := \{ \nakac{\trace}_{P} : \Hom_{\mathcal{M}}(P, \Nak(P)) \to \bfk \}_{P \in \Proj(\mathcal{M})},
\end{equation*}
where $\nakac{\trace}_P$ for $P \in \Proj(\mathcal{M})$ is the unique $\bfk$-linear map such that the diagram
\begin{equation}
  \label{eq:can-Naka-tw-tr-def}
  \begin{tikzcd}[column sep = 8em]
    \Hom_{\mathcal{M}}(P, \Nak(P))
    \arrow[d, "\nakac{\trace}_P"']
    & \Hom_{\mathcal{M}}(P, \Hom_{\mathcal{M}}(P, X)^* \vectactl X)
    \arrow[d, leftarrow, "\eqref{eq:cano-Vect-act-Hom-2}"]
    \arrow[l, "{\Hom_{\mathcal{M}}(P, \, \mathbb{i}_{X,P})}"'] \\
    \bfk
    & \Hom_{\mathcal{M}}(P, X)^* \otimes_{\bfk} \Hom_{\mathcal{M}}(P, X)
    \arrow[l, "\text{evaluation}"]
  \end{tikzcd}
\end{equation}
commutes. By the naturality of \eqref{eq:cano-Vect-act-Hom-2} and the dinaturality of the evaluation map, we easily verify that $\nakac{\trace}_{\bullet}$ is a $\Nak$-twisted trace on $\Proj(\mathcal{M})$.

\begin{definition}
  We call $\nakac{\trace}_{\bullet}$ the {\em canonical Nakayama-twisted trace} on $\Proj(\mathcal{M})$.
\end{definition}

Let $A$ be a finite-dimensional algebra. We consider the case where $\mathcal{M} = \lmod{A}$.

\begin{lemma}
  \label{lem:cano-Naka-tw-tr-A-mod}
  For every $P \in \lproj{A}$, we have
  \begin{equation}
    \label{eq:cano-Naka-tw-tr-A-mod-1}
    \nakac{\trace}_{P} = \epsilon_P \circ \theta_{P,\Nak(P)}^{-1},
  \end{equation}
  where $\theta$ is the natural transformation given by~\eqref{eq:nat-trans-theta} and $\epsilon_{P}$ is the $\bfk$-linear map
  \begin{equation}
    \label{eq:cano-Naka-tw-tr-A-mod-2}
    \epsilon_P : P^{\dagger} \otimes_A A^* \otimes_A P \to \bfk,
    \quad p^{\dagger} \otimes_A a^* \otimes_A p \mapsto \langle a^*, p^{\dagger}(p) \rangle.
  \end{equation}
\end{lemma}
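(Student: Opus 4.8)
The plan is to identify $\nakac{\trace}_P$ by means of the universal property that defines it, rather than by unwinding the chain of isomorphisms used to build $\mathbb{i}_{X,M}$ in Subsection~\ref{subsec:Naka-revisited}. Since $P$ is projective, $\Hom_A(P,-)$ preserves the coend computing $\Nak(P)$, so $\Hom_A(P,\Nak(P))$ is itself a coend with universal dinatural transformation $\Hom_A(P,\mathbb{i}_{X,P})$ ($X\in\lmod{A}$). Thus the commutativity of the diagram \eqref{eq:can-Naka-tw-tr-def} amounts to saying that $\nakac{\trace}_P$ is the \emph{unique} $\bfk$-linear map on $\Hom_A(P,\Nak(P))$ whose precomposition with $\Hom_A(P,\mathbb{i}_{X,P})$ and then with the canonical isomorphism \eqref{eq:cano-Vect-act-Hom-2} equals the evaluation pairing $\mathrm{ev}_X\colon\Hom_A(P,X)^{*}\otimes_{\bfk}\Hom_A(P,X)\to\bfk$, for every $X\in\lmod{A}$. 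So it suffices to verify that the $\bfk$-linear map $\epsilon_P\circ\theta_{P,\Nak(P)}^{-1}$ on the right-hand side of \eqref{eq:cano-Naka-tw-tr-A-mod-1} enjoys this same property.

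To carry this out I would fix $X\in\lmod{A}$ and a pair of dual bases $\{p_i,p^i\}$ for $P$. Both sides of the identity to be checked are $\bfk$-linear, so it is enough to test it on a spanning set of $\Hom_A(P,X)^{*}\otimes_{\bfk}\Hom_A(P,X)$; since the map $\varphi_{X,P}$ of \eqref{eq:Nak-A-mod-lem-2} is an isomorphism, I would take elements of the form $\varphi_{X,P}(x^{*}\otimes_A q)\otimes_{\bfk}v$ with $x^{*}\in X^{*}$, $q\in P$ and $v\in\Hom_A(P,X)$. By the explicit formula \eqref{eq:cano-Vect-act-Hom} for the canonical $\Vect$-module structure of $\Hom_A(P,-)$, this element is carried by \eqref{eq:cano-Vect-act-Hom-2} to the morphism $h\colon P\to\Hom_A(P,X)^{*}\vectactl X$ with $h(p)=\varphi_{X,P}(x^{*}\otimes_A q)\otimes_{\bfk}v(p)$. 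The key input is then the identity \eqref{eq:Nak-A-mod-lem-3}, applied with $M=P$: it gives $(\mathbb{i}_{X,P}\circ h)(p)=\langle x^{*},?\,v(p)\rangle\otimes_A q$, an explicit element of $\Nak(P)=A^{*}\otimes_A P$ for each $p\in P$.

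It then remains to apply $\theta_{P,\Nak(P)}^{-1}$ and $\epsilon_P$ to this morphism. The formula \eqref{eq:nat-trans-theta-inv} gives $\theta_{P,\Nak(P)}^{-1}(\mathbb{i}_{X,P}\circ h)=p^{i}\otimes_A\langle x^{*},?\,v(p_i)\rangle\otimes_A q$, and then \eqref{eq:cano-Naka-tw-tr-A-mod-2} evaluates this to $\langle x^{*},\,p^{i}(q)\,v(p_i)\rangle$, the sum over $i$ being understood. Since $v$ is $A$-linear, $p^{i}(q)\,v(p_i)=v\big(p^{i}(q)\,p_i\big)$, and the dual basis relation $p^{i}(q)\,p_i=q$ collapses the sum to $\langle x^{*},v(q)\rangle$. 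On the other hand $\mathrm{ev}_X\big(\varphi_{X,P}(x^{*}\otimes_A q)\otimes_{\bfk}v\big)=\langle\varphi_{X,P}(x^{*}\otimes_A q),v\rangle=\langle x^{*},v(q)\rangle$ by \eqref{eq:Nak-A-mod-lem-2}. The two agree, which is exactly the characterizing property of $\nakac{\trace}_P$; uniqueness then yields \eqref{eq:cano-Naka-tw-tr-A-mod-1}.

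I expect the only genuine difficulty to be the bookkeeping around $\mathbb{i}_{X,M}$, whose construction passes through a composite of several isomorphisms and so is awkward to evaluate directly. The point of the approach above is that this awkwardness is absorbed entirely into the single identity \eqref{eq:Nak-A-mod-lem-3}; once the test elements are normalized through $\varphi_{X,P}$ as in the second paragraph, everything else is a routine manipulation of $\otimes_A$-tensors, finished off by the dual basis lemma, and one need only keep careful track of $\otimes_A$ versus $\otimes_{\bfk}$.
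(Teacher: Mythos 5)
Your proposal is correct and follows essentially the same route as the paper: both characterize $\nakac{\trace}_P$ via the coend universal property in \eqref{eq:can-Naka-tw-tr-def}, reduce to test elements with the first tensorand normalized through $\varphi_{X,P}$, and conclude using \eqref{eq:Nak-A-mod-lem-3}, \eqref{eq:nat-trans-theta-inv} and the dual basis relation. The only cosmetic difference is that the paper also normalizes the second tensorand as $\theta_{P,X}(p^{\dagger}\otimes_A x)$, whereas you keep a general $v\in\Hom_A(P,X)$ and invoke its $A$-linearity, which amounts to the same computation.
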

\begin{proof}
  For $P \in \lproj{A}$, we define $\trace'_P$ to be the right-hand side of \eqref{eq:cano-Naka-tw-tr-A-mod-1}. By the definition \eqref{eq:can-Naka-tw-tr-def} of the canonical Nakayama-twisted trace, this lemma is proved if we prove the equation
  \begin{equation}
    \label{eq:cano-Naka-tw-tr-A-mod-pf-1}
    \mathsf{t}'_P(\mathbb{i}_{X,P} \circ \Lambda(\xi,f)) = \langle \xi, f \rangle
  \end{equation}
  for all $P \in \lproj{A}$, $X \in \lmod{A}$, $\xi \in \Hom_A(P, X)^*$ and $f \in \Hom_A(P,X)$, where
  \begin{equation*}
    \Lambda(\xi, f) : P \to \Hom_A(P,X)^* \otimes_{\bfk} X,
    \quad p' \mapsto \xi \otimes_{\bfk} f(p')
    \quad (p' \in P).
  \end{equation*}
  Let $\varphi_{X,M} : X^* \otimes_A M \to \Hom_A(M, X)^*$ be the isomorphism given by \eqref{eq:Nak-A-mod-lem-2}.
  To prove \eqref{eq:cano-Naka-tw-tr-A-mod-pf-1}, it is enough to consider the case where $\xi = \varphi_{X,P}(x^* \otimes_A p)$ and $f = \theta_{P,X}(p^{\dagger} \otimes_A x)$ for some $x \in X$, $x^* \in X^*$, $p \in P$ and $p^{\dagger} \in P^{\dagger}$.
  The right-hand side of \eqref{eq:cano-Naka-tw-tr-A-mod-pf-1} is then computed as follows:
  \begin{align*}
    \langle \xi, f \rangle
    \mathop{=}^{\eqref{eq:Nak-A-mod-lem-2}} \langle x^*, \theta_{P,X}(p^{\dagger} \otimes_A x)(p) \rangle
    \mathop{=}^{\eqref{eq:nat-trans-theta}} \langle x^*, p^{\dagger}(p) x \rangle.
  \end{align*}  
  We fix a pair $\{ p_i, p^i \}$ of dual bases for $P$. Then the left-hand side of \eqref{eq:cano-Naka-tw-tr-A-mod-pf-1} is
  \begin{gather*}
    \mathsf{t}'_P(\mathbb{i}_{X,P} \circ \Lambda(\xi,f))
    \mathop{=}^{\eqref{eq:nat-trans-theta-inv}}
    \epsilon_P(p^i \otimes_A \mathbb{i}_{X,P}(\xi \otimes_{\bfk} f(p_i)))
    \mathop{=}^{\eqref{eq:Nak-A-mod-lem-3}}
    \epsilon_P(p^i \otimes_A \langle x^*, ? f(p_i) \rangle \otimes_A p) \\
    \mathop{=}^{\eqref{eq:cano-Naka-tw-tr-A-mod-2}}
    \langle x^*, p^i(p) f(p_i) \rangle
    \mathop{=}^{\eqref{eq:nat-trans-theta}}
    \langle x^*, p^i(p) \cdot p^{\dagger}(p_i) x \rangle
    = \langle x^*, p^{\dagger}(p) x \rangle,
  \end{gather*}
  where the summation over $i$ is understood.
  Thus \eqref{eq:cano-Naka-tw-tr-A-mod-pf-1} is verified. The proof is done.
\end{proof}

We fix a finite-dimensional $A$-bimodule $\Sigma$ and regard it as a $\bfk$-linear right exact endofunctor on $\lmod{A}$ as in Subsection~\ref{subsec:tw-tr-HH0}.
A bijection between $\TwTr(\Sigma)$ and $\HH_0(\Sigma)^*$ has been established there.
By using the canonical Nakayama-twisted trace, the bijection is expressed as follows:

\begin{lemma}
  \label{lem:tw-tr-Nakayama-ring-theoretical}
  The $\Sigma$-twisted trace $\trace^{\lambda}_{\bullet}$ associated to $\lambda \in \HH_0(\Sigma)^*$ is given by
  \begin{equation}
    \label{eq:tw-trace-Nakayama-ring-theoretical}
    \trace^{\lambda}_{P}(f)
    = \nakac{\trace}_P\Big(
      P \xrightarrow{\quad f \quad} 
      \Sigma \otimes_A P
      \xrightarrow{\quad \lambda^{\natural} \otimes_A \id_{P} \quad}
      A^* \otimes_A P
    \Big)
  \end{equation}
  for $P \in \lproj{A}$ and $f \in \Hom_{A}(P, \Sigma(P))$, where $\lambda^{\natural} : \Sigma \to A^*$ is given by \eqref{eq:lambda-nat}.
\end{lemma}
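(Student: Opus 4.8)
~

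\textbf{Plan of proof.}
The plan is to verify the formula \eqref{eq:tw-trace-Nakayama-ring-theoretical} by showing that its right-hand side, viewed as a family of $\bfk$-linear maps $\{ f \mapsto \nakac{\trace}_P((\lambda^{\natural} \otimes_A \id_P) \circ f) \}_{P \in \lproj{A}}$, is a $\Sigma$-twisted trace on $\lproj{A}$ corresponding to the linear form $\lambda$ under the bijection of Corollary~\ref{cor:tw-tr-A-mod}. Since that bijection is \eqref{eq:V-valued-trace} with $V = \bfk$, and since $\mathbf{H} = (\HH_0(\Sigma), \mathtt{T}_{\bullet})$ is the initial object of $\mathfrak{T}$ by Theorem~\ref{thm:initial-twisted-trace}, it suffices to check two things: first, that the right-hand side of \eqref{eq:tw-trace-Nakayama-ring-theoretical} does define a $\Sigma$-twisted trace (so that it lies in $\TwTr(\Sigma)$); second, that the unique morphism $\mathbf{H} \to \bfk$ in $\mathfrak{T}$ through which it factors is exactly $\lambda$.

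First I would dispatch the $\Sigma$-cyclicity. Given $f : P \to Q$ and $g : Q \to \Sigma(P)$ in $\lproj{A}$, one has $(\lambda^{\natural} \otimes_A \id_P) \circ g \circ f$ versus $(\lambda^{\natural} \otimes_A \id_Q) \circ \Sigma(f) \circ g = (\id_{A^*} \otimes_A f) \circ (\lambda^{\natural} \otimes_A \id_P) \circ g$, using that $\lambda^{\natural}$ is a morphism of $A$-bimodules (equivalently, that \eqref{eq:lambda-nat} is $A$-bilinear), so $\lambda^{\natural} \otimes_A \id$ commutes past $(-) \otimes_A f$. Thus the claim reduces to the $\Nak$-cyclicity of $\nakac{\trace}_{\bullet}$ applied to $\lambda^{\natural} \otimes_A \id_P \circ g : Q \to \Nak(P)$ and $f : P \to Q$, which holds since $\nakac{\trace}_{\bullet}$ is a $\Nak$-twisted trace (established in Subsection~\ref{subsec:cano-Naka-tw-tr}). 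This is routine and I would keep it short.

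Second, and this is where the real computation sits, I would identify the associated linear form on $\HH_0(\Sigma)$. By the proof of Theorem~\ref{thm:initial-twisted-trace}, the form corresponding to a twisted trace $\trace_{\bullet}$ is $[s] \mapsto \trace_A(\phi(s))$, where $\phi : \Sigma \to \Hom_A(A, \Sigma(A))$ is \eqref{eq:HH0-initial-pf-iso-1}, $\phi(s)(a) = as \otimes_A 1$. So I must compute $\nakac{\trace}_A\big((\lambda^{\natural} \otimes_A \id_A) \circ \phi(s)\big)$ and check it equals $\lambda([s])$. For this I would invoke Lemma~\ref{lem:cano-Naka-tw-tr-A-mod}, which gives $\nakac{\trace}_A = \epsilon_A \circ \theta_{A,\Nak(A)}^{-1}$, together with the dual-basis pair $\{ 1_A, \id_A \}$ for $A$: by \eqref{eq:nat-trans-theta-inv}, $\theta_{A,\Nak(A)}^{-1}$ sends a morphism $h : A \to A^* \otimes_A A$ to $\id_A \otimes_A h(1_A)$. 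The composite $(\lambda^{\natural} \otimes_A \id_A) \circ \phi(s)$ sends $1 \mapsto \lambda^{\natural}(s) \otimes_A 1$, so applying $\epsilon_A$ (which is \eqref{eq:cano-Naka-tw-tr-A-mod-2}, evaluating the $A^*$-component at $1_A(1) = 1$) yields $\langle \lambda^{\natural}(s), 1 \rangle = \langle \lambda, 1 \cdot s \rangle = \lambda(s)$ by \eqref{eq:lambda-nat}. Hence the associated form is $[s] \mapsto \lambda(s) = \lambda([s])$, as desired.

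The main obstacle is bookkeeping the tensor-over-$A$ identifications carefully — in particular keeping straight that $\lambda^{\natural} \otimes_A \id_P$ genuinely commutes with postcomposition by morphisms of left $A$-modules (the $A$-bimodule naturality of $\lambda^{\natural}$ is what makes this work and is the only nontrivial input to cyclicity), and correctly reading off $\theta^{-1}$ and $\epsilon_A$ on the specific element $\phi(s)$. Once these are in place the verification is immediate, and the lemma follows by uniqueness of the morphism out of the initial object $\mathbf{H}$ in $\mathfrak{T}$.
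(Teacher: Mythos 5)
Your proof is correct, but it takes a different route from the paper's. The paper proves the lemma by a single direct computation at an arbitrary projective $P$: it expands $\nakac{\trace}_P\bigl((\lambda^{\natural}\otimes_A\id_P)\circ f\bigr)$ via Lemma~\ref{lem:cano-Naka-tw-tr-A-mod} and a pair of dual bases $\{p_i,p^i\}$ for $P$, obtaining $\langle\lambda, p^i(f_P(p_i))\cdot f_{\Sigma}(p_i)\rangle$, which is exactly $\lambda(\mathtt{T}_P(f))=\trace^{\lambda}_P(f)$ by the Hattori--Stallings formula \eqref{eq:HH0-trace-formula}. You instead first check that the right-hand side is itself a $\Sigma$-twisted trace (using that $\lambda^{\natural}$ is an $A$-bimodule map so that $(\lambda^{\natural}\otimes_A\id)$ intertwines $\Sigma(f)$ with $\Nak(f)$, plus the $\Nak$-cyclicity of $\nakac{\trace}_{\bullet}$), and then invoke the initial-object property of $(\HH_0(\Sigma),\mathtt{T}_{\bullet})$ from Theorem~\ref{thm:initial-twisted-trace} to reduce the identification of the associated linear form to the single evaluation $P=A$, $f=\phi(s)$, where the dual-basis pair $\{1_A,\id_A\}$ makes the computation with Lemma~\ref{lem:cano-Naka-tw-tr-A-mod} trivial and yields $\lambda(s)$. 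Both arguments rest on Lemma~\ref{lem:cano-Naka-tw-tr-A-mod}; what your route buys is that all dual-basis bookkeeping happens only in the degenerate case $P=A$, at the cost of the extra cyclicity verification and the appeal to the universal property, whereas the paper's computation is shorter and needs neither. (Minor notational point: in your final evaluation the dual-basis element of $A^{\dagger}$ is $\id_A$, so the pairing is $\langle\lambda^{\natural}(s),\id_A(1_A)\rangle=\langle\lambda^{\natural}(s),1_A\rangle=\lambda(s)$; your ``$1_A(1)=1$'' reads as a slip but the substance is right.)
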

\begin{proof}
  Given a morphism $f : P \to \Sigma(P)$ in $\lmod{A}$ with $P \in \lproj{A}$, we denote by $\trace'_P(f)$ the right-hand side of \eqref{eq:tw-trace-Nakayama-ring-theoretical}. We take a pair $\{ p_i, p^i \}$ of dual bases for $P$ and compute
  \begin{gather*}
    \trace'_P(f)
    = \epsilon_P(\theta_{P,\Nak(P)}^{-1}((\lambda^{\natural} \otimes_A \id_P) \circ f))
    \mathop{=}^{\eqref{eq:nat-trans-theta-inv}}
    \epsilon_P(p^i \otimes_A \lambda^{\natural}(f_{\Sigma}(p_i)) \otimes_A f_P(p_i)) \\
    \mathop{=}^{\eqref{eq:cano-Naka-tw-tr-A-mod-2}}
    \langle \lambda^{\natural}(f_{\Sigma}(p_i)), p^i (f_P(p_i)) \rangle
    \mathop{=}^{\eqref{eq:lambda-nat}}
    \langle \lambda, p^i (f_P(p_i)) \cdot f_{\Sigma}(p_i) \rangle
    \mathop{=}^{\eqref{eq:HH0-trace-formula}}
    \trace^{\lambda}_{P}(f),
  \end{gather*}
  where $f(p)$ is expressed as $f(p) = f_{\Sigma}(p) \otimes_A f_{P}(p)$ as in \eqref{eq:HH0-trace-formula}. The proof is done.
\end{proof}

We define the linear map $\lambda : A^* \to \bfk$ by $\lambda(a^*) = a^*(1_A)$.
It is easy to verify that $\lambda$ is the element of $\HH_0(A^*)^*$ such that $\lambda^{\natural} = \id_{A^*}$. The above lemma shows that the canonical Nakayama-twisted trace for $\lmod{A}$ is the $\Nak$-twisted trace associated to $\lambda$ under the bijection given by Theorem~\ref{thm:tw-tr-A-mod-non-deg}. Since $\id_{A^*}$ is an isomorphism, we have the following consequence:

\begin{lemma}
  The canonical Nakayama-twisted trace is non-degenerate.
\end{lemma}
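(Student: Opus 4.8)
The plan is to combine the two preceding lemmas. By Lemma~\ref{lem:tw-tr-Nakayama-ring-theoretical} (and the sentence immediately following it), for $\mathcal{M} = \lmod{A}$ the canonical Nakayama-twisted trace $\nakac{\trace}_{\bullet}$ coincides with the $\Nak$-twisted trace $\trace^{\lambda}_{\bullet}$ associated, via the bijection of Corollary~\ref{cor:tw-tr-A-mod} (equivalently Theorem~\ref{thm:tw-tr-A-mod-non-deg}), to the element $\lambda \in \HH_0(A^*)^*$ defined by $\lambda(a^*) = a^*(1_A)$. The key observation, also already recorded, is that this $\lambda$ satisfies $\lambda^{\natural} = \id_{A^*}$.

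Now I would invoke Theorem~\ref{thm:tw-tr-A-mod-non-deg}, applied with $\Sigma = A^*$ (viewed as an $A$-bimodule, so that $\Sigma \otimes_A (-) = \Nak_{\lmod{A}}$ by Lemma~\ref{lem:Nakayama-A-mod}). By the equivalence (1) $\Leftrightarrow$ (3) in that theorem, the $\Sigma$-twisted trace $\trace^{\lambda}_{\bullet}$ on $\lproj{A}$ is non-degenerate if and only if $\lambda^{\natural} : \Sigma \to A^*$ is an isomorphism. Since $\lambda^{\natural} = \id_{A^*}$ is manifestly an isomorphism, condition (3) holds, hence $\trace^{\lambda}_{\bullet}$ is non-degenerate. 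As $\nakac{\trace}_{\bullet} = \trace^{\lambda}_{\bullet}$, this proves the claim for $\mathcal{M} = \lmod{A}$.

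Finally, for a general finite abelian category $\mathcal{M}$ one chooses a finite-dimensional algebra $A$ with an equivalence $\mathcal{M} \approx \lmod{A}$. Both the Nakayama functor and the canonical Nakayama-twisted trace are constructed purely from the $\bfk$-linear structure of $\mathcal{M}$ (the canonical $\Vect$-action and the universal property of the coend $\Nak_{\mathcal{M}}$), so they transport along the equivalence; hence non-degeneracy, being an intrinsic statement about the pairings $\Hom_{\mathcal{M}}(M, \Nak(P)) \times \Hom_{\mathcal{M}}(P, M) \to \bfk$, follows from the $\lmod{A}$ case. There is essentially no obstacle here: the only mild point to be careful about is checking that $\lambda^{\natural} = \id_{A^*}$ for the stated $\lambda$, which amounts to unwinding \eqref{eq:lambda-nat}, namely $\langle \lambda^{\natural}(a^*), a \rangle = \langle \lambda, a \rightharpoonup a^* \rangle = (a \rightharpoonup a^*)(1_A) = a^*(1_A \cdot a) = a^*(a)$ for all $a \in A$, $a^* \in A^*$ — but this is already asserted in the excerpt, so the proof reduces to a one-line citation of Theorem~\ref{thm:tw-tr-A-mod-non-deg}.
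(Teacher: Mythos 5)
Your proposal is correct and follows exactly the paper's own argument: the canonical Nakayama-twisted trace is identified (via Lemma~\ref{lem:tw-tr-Nakayama-ring-theoretical}) with the trace $\trace^{\lambda}_{\bullet}$ for $\lambda(a^*) = a^*(1_A)$, which has $\lambda^{\natural} = \id_{A^*}$, so Theorem~\ref{thm:tw-tr-A-mod-non-deg} gives non-degeneracy. Your extra remark about transporting the statement along an equivalence $\mathcal{M} \approx \lmod{A}$ only makes explicit what the paper leaves implicit.
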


Let $\mathcal{M}$ be a finite abelian category.
The above lemma means that the pairing
\begin{equation}
  \nakac{\beta}_{M, P} : \Hom_{\mathcal{M}}(M, \Nak(P)) \times \Hom_{\mathcal{M}}(P, M) \to \bfk,
  \quad (f, g) \mapsto \nakac{\trace}_P(f g)
\end{equation}
is non-degenerate for all $P \in \Proj(\mathcal{M})$ and $M \in \mathcal{M}$. For later use, we denote by
\begin{equation}
  \nakac{\alpha}_{M,P} : \Hom_{\mathcal{M}}(M, \Nak(P)) \to \Hom_{\mathcal{M}}(P, M)^*,
  \quad f \mapsto \nakac{\beta}_{M,P}(f, -)
\end{equation}
the $\bfk$-linear map induced by $\beta_{M,P}$. By the $\Nak$-cyclicity of $\nakac{\trace}_{\bullet}$, we see that $\nakac{\alpha}_{M,P}$ is natural in $M$ and $P$. Furthermore, $\nakac{\alpha}_{M,P}$ is an isomorphism since $\nakac{\beta}_{M,P}$ is non-degenerate.

\subsection{Category-theoretical reformulation}
\label{subsec:cat-theo-reform}

Let $\mathcal{M}$ be a finite abelian category, and let $\Sigma$ be a $\bfk$-linear right exact endofunctor on $\mathcal{M}$. For simplicity, we write $\mathcal{P} = \Proj(\mathcal{M})$. Given a natural transformation $\xi : \Sigma \to \Nak$, we define
\begin{equation}
  \label{eq:tw-trace-xi-def}
  \trace^{\xi}_{P} : \Hom_{\mathcal{M}}(P, \Sigma(P)) \to \bfk,
  \quad \trace_P^{\xi}(f) := \nakac{\trace}_{P}(\xi_P \circ f)
\end{equation}
for $P \in \mathcal{P}$ and $f \in \Hom_{\mathcal{M}}(P, \Sigma(P))$. By the fact that $\nakac{\trace}_{\bullet}$ is an $\Nak$-twisted trace, one can easily verify that $\trace^{\xi}_{\bullet} := \{ \trace^{\xi}_P \}$ is a $\Sigma$-twisted trace on $\mathcal{P}$. Thus we have the map
\begin{equation}
  \Phi_1 : \Nat(\Sigma, \Nak) \to \TwTr(\Sigma),
  \quad \Phi_1(\xi) = \trace^{\xi}_{\bullet}.
\end{equation}
If $\mathcal{M} = \lmod{A}$ for some finite-dimensional algebra $A$, then, by the Eilenberg-Watts equivalence, we may assume that the functor $\Sigma$ is given by tensoring a finite-dimensional $A$-bimodule. Abusing notation, we denote that bimodule by $\Sigma$ (thus, as in Subsection~\ref{subsec:tw-tr-HH0}, we have $\Sigma(M) = \Sigma \otimes_A M$). By Lemma~\ref{lem:tw-tr-Nakayama-ring-theoretical}, we see that the composition
\begin{equation}
  \HH_0(\Sigma)^*
  \xrightarrow[\approx]{\ \eqref{eq:HH0-and-bimodules} \ }
  \Hom_{A|A}(\Sigma, A^*)
  \xrightarrow[\approx]{\ \eqref{eq:EW-equivalence} \ }
  \Nat(\Sigma, \Nak)
  \xrightarrow{\  \Phi_1 \ }
  \TwTr(\Sigma)
\end{equation}
agrees with the bijection given in Theorem~\ref{thm:tw-tr-A-mod-non-deg}.

The map $\Phi_1$ is described without referencing an algebra $A$ such that $\mathcal{M} \approx \lmod{A}$. Therefore the map $\Phi_1$ is a desired categorical interpretation of the bijection given in Theorem~\ref{thm:tw-tr-A-mod-non-deg}. We also desire to describe the inverse of $\Phi_1$ in a categorical way. For this purpose, we introduce two functors
$\mathbb{h}^*, \mathbb{h}_{\Sigma} : \mathcal{M}^{\op} \times \mathcal{P} \to \Vect$ by
\begin{equation*}
  \mathbb{h}^*(M, P) = \Hom_{\mathcal{M}}(P, M)^*,
  \quad \mathbb{h}_{\Sigma}(M, P) = \Hom_{\mathcal{M}}(M, \Sigma(P))
\end{equation*}
and define two maps $\Phi_2 : \TwTr(\Sigma) \to \Nat(\mathbb{h}_{\Sigma}, \mathbb{h}^*)$ and $\Phi_3 : \Nat(\mathbb{h}_{\Sigma}, \mathbb{h}^*) \to \Nat(\Sigma, \Nak)$ as follows:
\begin{itemize}
\item Given $\trace_{\bullet} \in \TwTr(\Sigma)$, $M \in \mathcal{M}$ and $P \in \mathcal{P}$, we define $\alpha_{M,P} : \mathbb{h}_{\Sigma}(M, P) \to \mathbb{h}^*(M, P)$ by
  \begin{equation*}
    \langle \alpha_{M,P}(f), g \rangle = \trace_{P}(f g)
    \quad (f \in \mathbb{h}_{\Sigma}(M, P), g \in \Hom_{\mathcal{M}}(P, M)).
  \end{equation*}
  It follows from the $\Sigma$-cyclicity of $\trace_{\bullet}$ that $\alpha = \{ \alpha_{M,P} \}_{M \in \mathcal{M}, P \in \mathcal{P}}$ is a natural transformation from $\mathbb{h}_{\Sigma}$ to $\mathbb{h}^*$. We now define $\Phi_2(\trace_{\bullet}) = \alpha$.
\item Given $\alpha \in \Nat(\mathbb{h}_{\Sigma}, \mathbb{h}^*)$, we consider the natural transformation
  \begin{equation*}
    (\nakac{\alpha}_{M,P})^{-1} \circ \alpha_{M,P} : \Hom_{\mathcal{M}}(M, \Sigma(P)) \to \Hom_{\mathcal{M}}(M, \Nak(P))
  \end{equation*}
  for $M \in \mathcal{M}$ and $P \in \mathcal{P}$. By the Yoneda lemma, there is a unique element $\xi \in \Nat(\Sigma|_{\mathcal{P}}, \Nak|_{\mathcal{P}})$ such that the following equation holds:
  \begin{equation*}
    (\nakac{\alpha}_{M,P})^{-1} \circ \alpha_{M,P} = \Hom_{\mathcal{M}}(M, \xi_P)
    \quad (M \in \mathcal{M}, P \in \mathcal{P}).
  \end{equation*}
  By Lemma~\ref{lem:restriction-to-proj}, $\xi : \Sigma|_{\mathcal{P}} \to \Nak|_{\mathcal{P}}$ extends to a natural transformation from $\Sigma$ to $\Nak$, which we denote by the same symbol $\xi$. We now define $\Phi_3(\alpha) = \xi$.
\end{itemize}
Now we state the following main theorem of this section:

\begin{theorem}
  The maps $\Phi_1$, $\Phi_2$ and $\Phi_3$ are bijections such that $\Phi_{3} \Phi_{2} \Phi_{1}$ is the identity map. Furthermore, these bijections restrict to bijections between the following three sets:
  \begin{itemize}
  \item The set of non-degenerate $\Sigma$-twisted traces on $\Proj(\mathcal{M})$.
  \item The set of natural isomorphisms $\Sigma \to \Nak$.
  \item The set of natural isomorphisms $\mathbb{h}_{\Sigma} \to \mathbb{h}^*$.
  \end{itemize}
\end{theorem}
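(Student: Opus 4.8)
The plan is to derive everything from the three \emph{cyclic} identities $\Phi_3\Phi_2\Phi_1 = \id_{\Nat(\Sigma,\Nak)}$, $\Phi_1\Phi_3\Phi_2 = \id_{\TwTr(\Sigma)}$ and $\Phi_2\Phi_1\Phi_3 = \id_{\Nat(\mathbb{h}_{\Sigma},\mathbb{h}^*)}$. The only external inputs needed are: the facts recorded just above that each $\nakac{\alpha}_{M,P}$ is an isomorphism natural in $M \in \mathcal{M}$ and $P \in \mathcal{P} := \Proj(\mathcal{M})$; that the restriction functor $\Rex(\mathcal{M},\mathcal{M}) \to \Fun_{\bfk}(\mathcal{P},\mathcal{M})$ of Lemma~\ref{lem:restriction-to-proj} is fully faithful; and elementary linear algebra over $\bfk$. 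Once the three identities hold, each of $\Phi_1$, $\Phi_2$, $\Phi_3$ has a two-sided inverse given by the composite of the other two, hence is a bijection; and the first identity is precisely the assertion that $\Phi_3\Phi_2\Phi_1$ is the identity map.

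To prove $\Phi_3\Phi_2\Phi_1 = \id$, I would fix $\xi \in \Nat(\Sigma,\Nak)$ and unwind the definitions of $\Phi_1$ and $\Phi_2$: writing $\alpha = \Phi_2(\Phi_1(\xi))$, for $f \in \Hom_{\mathcal{M}}(M,\Sigma(P))$ and $g \in \Hom_{\mathcal{M}}(P,M)$ one has $\langle \alpha_{M,P}(f), g\rangle = \nakac{\trace}_P(\xi_P \circ f \circ g) = \langle \nakac{\alpha}_{M,P}(\xi_P \circ f), g\rangle$, so that $(\nakac{\alpha}_{M,P})^{-1} \circ \alpha_{M,P} = \Hom_{\mathcal{M}}(M,\xi_P)$, i.e.\ post-composition by $\xi_P$. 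By the Yoneda-type uniqueness built into the definition of $\Phi_3$ and by Lemma~\ref{lem:restriction-to-proj}, this forces $\Phi_3(\alpha) = \xi$. The identity $\Phi_2\Phi_1\Phi_3 = \id$ follows from the same computation read backwards: for $\alpha \in \Nat(\mathbb{h}_{\Sigma},\mathbb{h}^*)$ and $\xi = \Phi_3(\alpha)$ it gives $\Phi_2(\Phi_1(\xi))_{M,P} = \nakac{\alpha}_{M,P} \circ \Hom_{\mathcal{M}}(M,\xi_P)$, while the defining relation of $\Phi_3$ gives $\Hom_{\mathcal{M}}(M,\xi_P) = (\nakac{\alpha}_{M,P})^{-1} \circ \alpha_{M,P}$, and composing yields $\Phi_2(\Phi_1(\xi))_{M,P} = \alpha_{M,P}$. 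Finally, for $\Phi_1\Phi_3\Phi_2 = \id$, I would fix $\trace_{\bullet} \in \TwTr(\Sigma)$, set $\alpha = \Phi_2(\trace_{\bullet})$ and $\xi = \Phi_3(\alpha)$, and compute, for $P \in \mathcal{P}$ and $f \in \Hom_{\mathcal{M}}(P,\Sigma(P))$, that $\Phi_1(\xi)_P(f) = \nakac{\trace}_P(\xi_P \circ f) = \langle \nakac{\alpha}_{P,P}(\xi_P \circ f), \id_P\rangle = \langle \alpha_{P,P}(f), \id_P\rangle = \trace_P(f)$, using $\nakac{\trace}_P(h) = \langle \nakac{\alpha}_{P,P}(h), \id_P\rangle$ and the $M = P$ case of the defining relation of $\Phi_3$.

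For the refinement I would pass through $\Phi_2$ and $\Phi_3$ in turn. By definition $\trace_{\bullet}$ is non-degenerate exactly when, for all $M \in \mathcal{M}$ and $P \in \mathcal{P}$, the pairing $(f,g) \mapsto \trace_P(fg)$ on the finite-dimensional spaces $\Hom_{\mathcal{M}}(M,\Sigma(P))$ and $\Hom_{\mathcal{M}}(P,M)$ is non-degenerate; since a bilinear pairing of finite-dimensional vector spaces is non-degenerate iff either of its two induced maps is bijective, this holds iff $\alpha_{M,P} = \Phi_2(\trace_{\bullet})_{M,P} \colon \Hom_{\mathcal{M}}(M,\Sigma(P)) \to \Hom_{\mathcal{M}}(P,M)^*$ is an isomorphism for all such $M,P$, that is, iff $\Phi_2(\trace_{\bullet})$ is a natural isomorphism. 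Next, for $\alpha \in \Nat(\mathbb{h}_{\Sigma},\mathbb{h}^*)$ and $\xi = \Phi_3(\alpha)$: as each $\nakac{\alpha}_{M,P}$ is invertible, $\alpha_{M,P}$ is an isomorphism iff $\Hom_{\mathcal{M}}(M,\xi_P)$ is, and by the Yoneda lemma the latter holds for all $M \in \mathcal{M}$ iff $\xi_P$ is an isomorphism; hence $\alpha$ is a natural isomorphism iff $\xi_P$ is invertible for every $P \in \mathcal{P}$. Finally, a natural transformation $\xi \colon \Sigma \to \Nak$ of right exact endofunctors is an isomorphism iff $\xi_P$ is invertible for every $P \in \mathcal{P}$, because the restriction functor of Lemma~\ref{lem:restriction-to-proj} is fully faithful and fully faithful functors reflect isomorphisms (and $\xi|_{\mathcal{P}}$ is an isomorphism in $\Fun_{\bfk}(\mathcal{P},\mathcal{M})$ precisely when it is a pointwise isomorphism). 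Chaining these equivalences with the cyclic identities gives the desired correspondence among non-degenerate twisted traces, natural isomorphisms $\Sigma \to \Nak$, and natural isomorphisms $\mathbb{h}_{\Sigma} \to \mathbb{h}^*$.

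I expect the only genuinely delicate points to be bookkeeping: keeping the variance straight in the Yoneda identification underlying $\Phi_3$ (it is \emph{post}-composition by $\xi_P$, acting on $\Hom_{\mathcal{M}}(M,-)$), and justifying the passage ``pointwise isomorphism on $\Proj(\mathcal{M})$ $\Rightarrow$ isomorphism of right exact functors'' through the conservativity of the restriction functor rather than via an ad hoc five-lemma argument. Neither is deep; everything else is direct substitution into the definitions of $\Phi_1,\Phi_2,\Phi_3$, of $\nakac{\trace}_{\bullet}$, and of $\nakac{\alpha}_{\bullet}$.
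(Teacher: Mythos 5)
Your argument is correct, and it is organized a bit differently from the paper's. The paper does not prove all three cyclic identities: it gets bijectivity of $\Phi_1$ by citing the earlier algebraic result (the identification of $\TwTr(\Sigma)$ with $\HH_0(\Sigma)^*$ for $\mathcal{M}\approx\lmod{A}$, via Lemma~\ref{lem:tw-tr-Nakayama-ring-theoretical}), gets bijectivity of $\Phi_3$ by observing it is the composite of the Yoneda bijection with the bijection of Lemma~\ref{lem:restriction-to-proj}, proves only the single identity $\Phi_1\Phi_3\Phi_2=\mathrm{id}$ (your third computation, which coincides with the paper's), and then deduces $\Phi_2=\Phi_3^{-1}\Phi_1^{-1}$ and $\Phi_3\Phi_2\Phi_1=\mathrm{id}$. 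You instead establish all three cyclic identities by unwinding the definitions, using only the invertibility and naturality of $\nakac{\alpha}_{M,P}$ and the full faithfulness of the restriction functor; this makes the bijectivity of $\Phi_1$ a formal consequence rather than an appeal back to the $\lmod{A}$ model (though the invertibility of $\nakac{\alpha}$, which you correctly flag as an external input, is itself proved there). For the non-degeneracy statement the paper proves the three one-way inclusions $\Phi_i(\mathfrak{X}_i)\subset\mathfrak{X}_{i+1}$ — the inclusion for $\Phi_1$ resting on the non-degeneracy of the canonical Nakayama-twisted trace — whereas you prove two-way equivalences for $\Phi_2$ (finite-dimensionality of Hom spaces plus the standard fact that a pairing of finite-dimensional spaces is non-degenerate iff one induced map is bijective) and for $\Phi_3$ (Yoneda plus conservativity of the fully faithful restriction functor), and then transport the statement for $\Phi_1$ through the cyclic identities; this is equally valid and even slightly sharper locally, at the cost of not exhibiting directly why $\Phi_1$ sends isomorphisms to non-degenerate traces. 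Both routes share the same computational core; yours is more self-contained at the level of this theorem, while the paper's leans on the already-established algebraic bijections to shorten the verification.
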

\begin{proof}
  The bijectivity of $\Phi_1$ has been proved by showing that it reduces the bijection of Theorem~\ref{thm:tw-tr-A-mod-non-deg} in the case where $\mathcal{M} = \lmod{A}$ for some finite-dimensional algebra $A$. To see that $\Phi_3$ is bijective, we remark that there are bijections
  \begin{equation*}
    \Nat(\mathbb{h}_{\Sigma}, \mathbb{h}^*)
    \cong \Nat(\mathbb{h}_{\Sigma}, \mathbb{h}_{\Nak})
    \cong \Nat(\Sigma|_{\mathcal{P}}, \Nak|_{\mathcal{P}})
    \cong \Nat(\Sigma, \Nak)
  \end{equation*}
  by the Yoneda lemma and Lemma~\ref{lem:restriction-to-proj}. One can check that the map $\Phi_3$ is actually the composition of these bijections and, in particular, is bijective.

  Now we show that $\Phi_1 \Phi_3 \Phi_2$ is the identity map. Given $\trace_{\bullet} \in \TwTr(\Sigma)$, we set $\alpha = \Phi_2(t_{\bullet})$, $\xi = \Phi_3(\alpha)$ and $\trace'_{\bullet} = \Phi_1(\xi)$. By the definition of the map $\Phi_2$, we have $\nakac{\alpha}_{M,P}(\xi_P \circ f) = \alpha_{M,P}(f)$ for all objects $M \in \mathcal{M}$ and $P \in \mathcal{P}$ and all morphisms $f \in \Hom_{\mathcal{M}}(M, \Sigma(P))$. Hence we compute
  \begin{align*}
    \trace'_P(f)
    = \nakac{\trace}_{P}(\xi_P \circ f)
    = \langle \nakac{\alpha}_{P, P}(\xi_P \circ f), \id_{P} \rangle
    = \langle \alpha_{P, P}(f), \id_{P} \rangle
    = \trace_{P}(f)
  \end{align*}
  for all morphisms $f : P \to \Sigma(P)$ in $\mathcal{M}$ with $P \in \mathcal{P}$. Thus $\trace'_{\bullet} = \trace_{\bullet}$. This means that $\Phi_1 \Phi_3 \Phi_2$ is the identity map.

  Since we have already proved that $\Phi_1$ and $\Phi_3$ are bijective, we obtain $\Phi_2 = \Phi_3^{-1} \Phi_1^{-1}$. This implies the bijectivity of $\Phi_2$ and that $\Phi_3 \Phi_2 \Phi_1$ is the identity map.

  For simplicity, we denote by $\mathfrak{X}_1$, $\mathfrak{X}_2$ and $\mathfrak{X}_3$
  the set of natural isomorphisms from $\Sigma$ to $\Nak$,
  the set of non-degenerate $\Sigma$-twisted traces on $\mathcal{P}$,
  and the set of natural isomorphisms from $\mathbb{h}_{\Sigma}$ to $\mathbb{h}^*$, respectively.
  To complete the proof, we shall show that $\Phi_1$, $\Phi_2$ and $\Phi_3$ induce bijections between $\mathfrak{X}_1$, $\mathfrak{X}_2$ and $\mathfrak{X}_3$.
  To achieve this, it suffices to show
  $\Phi_1(\mathfrak{X}_1) \subset \mathfrak{X}_2$,
  $\Phi_2(\mathfrak{X}_2) \subset \mathfrak{X}_3$ and
  $\Phi_3(\mathfrak{X}_3) \subset \mathfrak{X}_1$.
  The first one follows from the non-degeneracy of the canonical Nakayama-twisted trace, the second from the definition of non-degeneracy, and the third from the Yoneda lemma. The proof is done.
\end{proof}

\section{Compatibility with the module structure}
\label{sec:compati-module}

\subsection{Closing operator and the module-compatibility}

Let $\mathcal{M}$ be a finite abelian category, and let $\mathcal{C}$ be a rigid monoidal category. We say that $\mathcal{C}$ {\em acts linearly} on $\mathcal{M}$ from the right if $\mathcal{M}$ is equipped with a structure of a right $\mathcal{C}$-module category such that the functor $\mathcal{M} \to \mathcal{M}$ given by $M \mapsto M \catactr X$ is $\bfk$-linear for all objects $X \in \mathcal{C}$.
Given finite abelian categories $\mathcal{M}$ and $\mathcal{N}$ on which $\mathcal{C}$ acts linearly, we denote by $\Rex_{\mathcal{C}}(\mathcal{M}, \mathcal{N})$ the category of $\bfk$-linear right exact right $\mathcal{C}$-module functors from $\mathcal{M}$ to $\mathcal{N}$.

Now let $\mathcal{M}$ be a finite abelian category on which $\mathcal{C}$ acts linearly from the right.
Then, for every object $X \in \mathcal{C}$, the endofunctor $(-) \catactr X$ on $\mathcal{M}$ is $\bfk$-linear and exact. The $\bfk$-linearity is a part of our assumption. The exactness follows from that $(-) \catactr X$ has a left and a right adjoint. Indeed, as in the case of rigid monoidal categories, there are natural isomorphisms
\begin{align}
  \label{eq:action-adj-1}
  (-)^{\flat} : \Hom_{\mathcal{M}}(M \catactr X, M')
  & \to \Hom_{\mathcal{M}}(M, M' \catactr X^*), \\
  \label{eq:action-adj-2}
  (-)^{\sharp} : \Hom_{\mathcal{M}}(M, M' \catactr X)
  & \to \Hom_{\mathcal{M}}(M \catactr {}^* \! X, M')
\end{align}
for $M, M' \in \mathcal{M}$ and $X \in \mathcal{C}$ defined by
\begin{equation*}
  f^{\flat} = (f \catactr \id_{X^*}) \circ (\id_M \catactr \coev_X),
  \quad
  g^{\sharp} = (\id_{M'} \catactr \eval_{{}^* \! X}) \circ (g \catactr \id_{{}^* \! X}).
\end{equation*}

We are interested in when the twisted trace is `compatible' with the right action of $\mathcal{C}$ on $\mathcal{M}$. To give a rigorous formulation of the compatibility, we use the linear map
\begin{gather*}
  \close_{M,N | X} : \Hom_{\mathcal{M}}(M \catactr X, N \catactr X^{**}) \to \Hom_{\mathcal{M}}(M, N)
  \quad (M, N \in \mathcal{M}, X \in \mathcal{C}), \\[3pt]
  \close_{M,N|X}(f) = (\id_N \catactr \eval_{X^*}) \circ (f \catactr \id_{X^*}) \circ (\id_M \catactr \coev_{X}).
\end{gather*}
The linear map $\close_{M,N | X}$ is called the partial trace in literature (at least when $\mathcal{M} = \mathcal{C}$ and $\mathcal{C}$ is pivotal), however, we call it the {\em closing operator} to prevent the proliferation of trace-like terms. This terminology is justified by the following graphical expression of the closing operator:
\begin{equation*}
  \close_{M,N|X}\left(
    \begin{array}{c}
      \begin{tikzpicture}[x=1em,y=1em,baseline=0pt,thick]
        \node (B1) at (0, 0) [draw, rectangle] {\makebox[3em]{$f$}};
        \draw let \p1 = ($(B1.north)-(1,0)$)
        in (\p1) to (\x1, +2) node [above] {$M$};
        \draw let \p1 = ($(B1.north)+(1,0)$)
        in (\p1) to (\x1, +2) node [above] {$X$};
        \draw let \p1 = ($(B1.south)-(1,0)$)
        in (\p1) to (\x1, -2) node [below] {$N$};
        \draw let \p1 = ($(B1.south)+(1,0)$)
        in (\p1) to (\x1, -2) node [below] {$\phantom{{}^{**}}X^{**}$};
      \end{tikzpicture}
    \end{array}\!\!\!\!
  \right) =
  \begin{array}{c}
    \begin{tikzpicture}[x=1em,y=1em,baseline=0pt,thick]
      \node (B1) at (0, 0) [draw, rectangle] {\makebox[3em]{$f$}};
      \draw let \p1 = ($(B1.north)-(1,0)$)
      in (\p1) to (\x1, +2) node [above] {$M$};
      \draw let \p1 = ($(B1.south)-(1,0)$)
      in (\p1) to (\x1, -2) node [below] {$N$};
      \draw let \p1 = ($(B1.north)+(1,0)$), \p2 = ($(B1.south)+(1,0)$)
      in (\p1)
      to [out=90, in=90, looseness=2] ($(\p1)+(2.5,0)$) coordinate (Q1)
      to ($(\p2)+(2.5,0)$) coordinate (Q2)
      to [out=-90, in=-90, looseness=2] (\p2);
      \node at ($(Q1)!.5!(Q2)$) [right] {$X^*$};
    \end{tikzpicture}
  \end{array}
  \quad (f \in \Hom_{\mathcal{M}}(M \catactr X, N \catactr X^{**})).
\end{equation*}

We denote by $\mathcal{M}_{\DLD}$ the category $\mathcal{M}$ equipped with the right $\mathcal{C}$-action twisted by the double left dual functor $\DLD(X) = X^{**}$ ($X \in \mathcal{C}$). Let $\Sigma \in \Rex(\mathcal{M}, \mathcal{M}_{\DLD})$. In other words, $\Sigma$ is a $\bfk$-linear right exact endofunctor on $\mathcal{M}$ equipped with a natural isomorphism
\begin{equation*}
  \Psi^{\Sigma}_{M,X} : \Sigma(M) \catactr X^{**} \to \Sigma(M \catactr X)
  \quad (M \in \mathcal{M}, X \in \mathcal{C})
\end{equation*}
such that the equations $\Psi^{\Sigma}_{M,\unitobj} = \id_{\Sigma(M)}$ and $\Psi^{\Sigma}_{M, X \otimes Y} = \Psi^{\Sigma}_{M \catactr X, Y} \circ (\Psi^{\Sigma}_{M, X} \catactr \id_{Y^{**}})$ hold for all objects $M \in \mathcal{M}$ and $X, Y \in \mathcal{C}$. Now we clarify what `compatible' means:

\begin{definition}
  A $\Sigma$-twisted trace $\trace_{\bullet}$ on $\Proj(\mathcal{M})$ is said to be {\em compatible with the right $\mathcal{C}$-module structure} (or {\em module-compatible} for short) if the equation
  \begin{equation}
    \label{eq:tw-tr-module-compatibitlity}
    \trace_{P}(\close_{P, \Sigma(P)|X}(f)) = \trace_{P \catactr X}(\Psi^{\Sigma}_{P,X} \circ f)
  \end{equation}
  holds for all $P \in \Proj(\mathcal{M})$, $X \in \mathcal{C}$ and $f \in \Hom_{\mathcal{M}}(P \catactr X, \Sigma(P) \catactr X^{**})$.
\end{definition}

Graphically, equation~\eqref{eq:tw-tr-module-compatibitlity} is expressed as follows:
\begin{equation*}
  \trace_{P} \left(
    \begin{array}{c}
      \begin{tikzpicture}[x=1em,y=1em,baseline=0pt,thick]
        \node (B1) at (0, 0) [draw, rectangle] {\makebox[3em]{$f$}};
        \draw let \p1 = ($(B1.north)-(1,0)$)
        in (\p1) to (\x1, +2) node [above] {$P$};
        \draw let \p1 = ($(B1.south)-(1,0)$)
        in (\p1) to (\x1, -2) node [below] {$\Sigma(P)$};
        \draw let \p1 = ($(B1.north)+(1,0)$), \p2 = ($(B1.south)+(1,0)$)
        in (\p1)
        to [out=90, in=90, looseness=2] ($(\p1)+(2.5,0)$) coordinate (Q1)
        to ($(\p2)+(2.5,0)$) coordinate (Q2)
        to [out=-90, in=-90, looseness=2] (\p2);
      \end{tikzpicture}
    \end{array}
  \right)
  = \trace_{P \catactr X} \left(
    \begin{array}{c}
      \begin{tikzpicture}[x=1em, y=1em, baseline=0pt, thick]
      \node (B1) at (0, 0) [draw, rectangle] {\makebox[4em]{$\Psi_{P,X}^{\Sigma} \circ f$}};
      \draw let \p1 = ($(B1.north)-(1.25,0)$)
      in (\p1) to (\x1, +2) node [above] {$P$};
      \draw let \p1 = ($(B1.north)+(1.25,0)$)
      in (\p1) to (\x1, +2) node [above] {$X$};
      \draw let \p1 = (B1.south)
      in (\p1) to (\x1, -2) node [below] {$\Sigma(P \catactr X)$};
    \end{tikzpicture}
    \end{array}
  \right)
\end{equation*}

If $P \in \Proj(\mathcal{M})$ and $X \in \mathcal{C}$, then the functor $\Hom_{\mathcal{M}}(P \catactr X, -)$ is exact since there is a natural isomorphism $\Hom_{\mathcal{M}}(P \catactr X, M) \cong \Hom_{\mathcal{M}}(P, M \catactr X^*)$ for $M \in \mathcal{M}$.
Thus $P \catactr X \in \Proj(\mathcal{M})$.
Thanks to this, the symbol $\trace_{P \catactr X}$ in equation \eqref{eq:tw-tr-module-compatibitlity} makes sense.

\begin{remark}
  \label{rem:pivotal-case}
  A {\em pivotal structure} of $\mathcal{C}$ is an isomorphism $\mathfrak{p} : \id_{\mathcal{C}} \to \DLD$ of monoidal functors. We suppose that a pivotal structure $\mathfrak{p}$ of $\mathcal{C}$ is given. Then an object $\Sigma \in \Rex_{\mathcal{C}}(\mathcal{M}, \mathcal{M}_{\DLD})$ is made into a right $\mathcal{C}$-module endofunctor on $\mathcal{M}$ by replacing its structure morphism $\Psi^{\Sigma}$ with
  \begin{equation*}
    \Sigma(M) \catactr X
    \xrightarrow{\ \id_{\Sigma(M)} \catactr \mathfrak{p}_{X} \ }
    \Sigma(M) \catactr X^{**}
    \xrightarrow{\ \Psi^{\Sigma}_{M,X} \ }
    \Sigma(M \catactr X),
  \end{equation*}
  and, in this way, one can identify $\Rex_{\mathcal{C}}(\mathcal{M}, \mathcal{M}_{\DLD})$ with $\Rex_{\mathcal{C}}(\mathcal{M}, \mathcal{M})$.

  Now let $\Sigma$ be an object of the category $\Rex_{\mathcal{C}}(\mathcal{M}, \mathcal{M}_{\DLD}) \cong \Rex_{\mathcal{C}}(\mathcal{M}, \mathcal{M})$. If $\Proj(\mathcal{M})$ is closed under the functor $\Sigma$, then our notion of a module-compatible $\Sigma$-twisted trace on $\Proj(\mathcal{M})$ is precisely a right module trace on $(\Proj(\mathcal{M}), \Sigma)$ in the sense of \cite{2018arXiv180901122F}.
  Note, in general, an object of $\Rex_{\mathcal{C}}(\mathcal{M}, \mathcal{M})$ does not have a structure of a $\DLD$-twisted module functor in absence of a pivotal structure of $\mathcal{C}$.

  If $\mathcal{M} = \mathcal{C}$ and $\Sigma = \id_{\mathcal{C}}$, then our notion further specializes to modified traces as mentioned in Introduction. Applications of results of this section to modified traces will be given in the next section.
\end{remark}

\subsection{The twisted module structure of the Nakayama functor}
\label{subsec:Naka-tw-module}

Let $\mathcal{M}$ and $\mathcal{C}$ be as in the previous subsection, and let $\Sigma \in \Rex_{\mathcal{C}}(\mathcal{M}, \mathcal{M}_{\DLD})$.
Then there is the bijection between $\TwTr(\Sigma)$ and $\Nat(\Sigma, \Nak)$ established in the previous section.
We recall that the Nakayama functor $\Nak := \Nak_{\mathcal{M}}$ belongs to $\Rex_{\mathcal{C}}(\mathcal{M}, \mathcal{M}_{\DLD})$ \cite[Theorem 4.4]{MR4042867}. The goal of this section is to prove that the set of module-compatible $\Sigma$-twisted traces on $\Proj(\mathcal{M})$ is in bijection with the set of morphisms of right $\mathcal{C}$-module functors from $\Sigma$ to $\Nak$ (Theorem~\ref{thm:tw-tr-module-compati}).

As a preparation, we recall from \cite{MR4042867} how the structure morphism
\begin{equation}
  \label{eq:Naka-twisted-mod-str}
  \Psi^{\Nak}_{M,X} : \Nak_{\mathcal{M}}(M) \catactr X^{**} \to \Nak_{\mathcal{M}}(M \catactr X)
  \quad (M \in \mathcal{M}, X \in \mathcal{C})
\end{equation}
of the Nakayama functor is given. The following formula for coends is useful:

\begin{lemma}[{\cite[Lemma 3.9]{MR2869176}}]
  \label{lem:coends-adjunction}
  Let $\mathcal{A}$, $\mathcal{B}$ and $\mathcal{V}$ be categories, let $F : \mathcal{A} \to \mathcal{B}$ be a functor admitting a right adjoint $G$, and let $H : \mathcal{B}^{\op} \times \mathcal{A} \to \mathcal{V}$ be a functor. If either of coends
  \begin{equation*}
    C_1 = \int^{X \in \mathcal{A}} H(F(X), X)
    \quad \text{or} \quad
    C_2 = \int^{Y \in \mathcal{B}} H(Y, G(Y))
  \end{equation*}
  exists, then both exist and they are canonically isomorphic.
\end{lemma}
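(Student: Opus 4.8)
The plan is to argue via the universal property of a coend as a representing object for cowedges, and to transfer representability across the natural bijection between the two relevant cowedge functors induced by the adjunction $F \dashv G$.

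Recall that for a functor $T : \mathcal{C}^{\op} \times \mathcal{C} \to \mathcal{V}$ and an object $d$ of $\mathcal{V}$, a \emph{cowedge} from $T$ to $d$ is a family $\omega_c : T(c,c) \to d$ ($c \in \mathcal{C}$) satisfying $\omega_c \circ T(f, \id_c) = \omega_{c'} \circ T(\id_{c'}, f)$ for every $f : c \to c'$ in $\mathcal{C}$, and that the coend $\int^{c} T(c,c)$, if it exists, is precisely an object representing the functor $\mathcal{V} \to \mathbf{Set}$ sending $d$ to the set of cowedges from $T$ to $d$. Here the two relevant functors are $T_1 : \mathcal{A}^{\op} \times \mathcal{A} \to \mathcal{V}$, $(X',X) \mapsto H(F(X'), X)$, and $T_2 : \mathcal{B}^{\op} \times \mathcal{B} \to \mathcal{V}$, $(Y',Y) \mapsto H(Y', G(Y))$, so that $C_1 = \int^{X} T_1(X,X)$ and $C_2 = \int^{Y} T_2(Y,Y)$. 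Thus it is enough to construct, for each $d$, a bijection between cowedges from $T_1$ to $d$ and cowedges from $T_2$ to $d$, natural in $d$: once this is done, either of $C_1$, $C_2$ exists if and only if the other does (naturally isomorphic functors have the same representing objects), and the two are canonically isomorphic, the isomorphism being the one obtained by transporting universal cowedges along the bijection.

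I would construct the comparison using the unit $\eta : \id_{\mathcal{A}} \to GF$ and counit $\varepsilon : FG \to \id_{\mathcal{B}}$ of the adjunction. Given a cowedge $\theta_\bullet$ from $T_2$ to $d$, put $\omega_X := \theta_{F(X)} \circ H(\id_{F(X)}, \eta_X) : H(F(X), X) \to H(F(X), GF(X)) \to d$; given a cowedge $\omega_\bullet$ from $T_1$ to $d$, put $\theta_Y := \omega_{G(Y)} \circ H(\varepsilon_Y, \id_{G(Y)}) : H(Y, G(Y)) \to H(FG(Y), G(Y)) \to d$. Checking that the families $\omega_\bullet$ and $\theta_\bullet$ so defined are again cowedges is a routine diagram chase using the functoriality of $H$ together with the naturality of $\eta$ (resp.\ $\varepsilon$); naturality of the two assignments in $d$ is immediate, since they are post-composition along morphisms not depending on $d$.

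The only step requiring genuine care is to check that the two assignments are mutually inverse. Starting from a cowedge $\omega_\bullet$ of $T_1$, passing to $\theta_\bullet$ and back to $\omega'_\bullet$, one rewrites, using functoriality of $H$, \[ \omega'_X = \omega_{GF(X)} \circ H(\id_{FGF(X)}, \eta_X) \circ H(\varepsilon_{F(X)}, \id_X), \] then applies the cowedge identity for $\omega_\bullet$ to the morphism $\eta_X : X \to GF(X)$ of $\mathcal{A}$ to obtain $\omega'_X = \omega_X \circ H(F(\eta_X), \id_X) \circ H(\varepsilon_{F(X)}, \id_X)$, and finally uses the triangle identity $\varepsilon_{F(X)} \circ F(\eta_X) = \id_{F(X)}$ together with the contravariance of $H$ in its first slot to conclude $\omega'_X = \omega_X$. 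The reverse round trip $\theta_\bullet \mapsto \omega_\bullet \mapsto \theta'_\bullet$ is symmetric, using instead the cowedge identity applied to $\varepsilon_Y : FG(Y) \to Y$ and the other triangle identity $G(\varepsilon_Y) \circ \eta_{G(Y)} = \id_{G(Y)}$. I expect the bookkeeping of variances — matching each composite of $H$-morphisms to the appropriate instance of a triangle identity while keeping the source and target objects straight — to be the main (though still elementary) obstacle; everything else in the argument is formal.
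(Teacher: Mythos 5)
Your proposal is correct and follows essentially the same route as the paper, which cites the lemma and only records the comparison maps determined by $\phi \circ i_X = j_{F(X)} \circ H(F(X),\eta_X)$ and $\overline{\phi} \circ j_Y = i_{G(Y)} \circ H(\varepsilon_Y, G(Y))$ — exactly the unit/counit precompositions you use. Your cowedge-transport formulation simply fleshes out this construction so that it also yields the existence transfer, and the verifications (dinaturality via naturality of $\eta$, $\varepsilon$, and the round trips via the triangle identities) are carried out correctly.
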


For reader's convenience, we include how the isomorphism $C_1 \cong C_2$ is constructed.
Let $i$ and $j$ be the universal dinatural transformation for the coend $C_1$ and $C_2$, respectively. We define $\phi : C_1 \to C_2$ and $\overline{\phi} : C_2 \to C_1$ to be the unique morphisms in $\mathcal{V}$ such that the equations
\begin{equation*}
  \phi \circ i_X = j_{F(X)} \circ H(F(X), \eta_X)
  \quad \text{and} \quad
  \overline{\phi} \circ j_X = i_{G(X)} \circ H(\varepsilon_Y, G(Y))
\end{equation*}
hold for all objects $X \in \mathcal{A}$ and $Y \in \mathcal{B}$, where $\eta$ and $\varepsilon$ are the unit and the counit for the adjunction $F \dashv G$, respectively. Then $\phi$ and $\overline{\phi}$ are mutually inverse to each other.

Given a functor $F$, we denote its right adjoint by $F^{\radj}$ (if it exists). Let $F : \mathcal{M} \to \mathcal{M}$ be a $\bfk$-linear right exact functor such that $F^{\radj}$ is right exact and the double right adjoint $F^{\rradj} := (F^{\radj})^{\radj}$ is again right exact. We then have natural isomorphisms
\begin{align}
  \label{eq:Naka-adj-1}
  F^{\rradj} \Nak(M)
  & \textstyle \cong \int^{X \in \mathcal{M}} F^{\rradj}(\Hom_{\mathcal{M}}(M, X)^* \vectactl X) \\
  \label{eq:Naka-adj-2}
  & \textstyle \cong \int^{X \in \mathcal{M}} \Hom_{\mathcal{M}}(M, X)^* \vectactl F^{\rradj}(X) \\
  \label{eq:Naka-adj-3}
  & \textstyle \cong \int^{X \in \mathcal{M}} \Hom_{\mathcal{M}}(M, F^{\radj}(X))^* \vectactl X \\
  \label{eq:Naka-adj-4}
  & \textstyle \cong \int^{X \in \mathcal{M}} \Hom_{\mathcal{M}}(F(M), X)^* \vectactl X
    = \Nak F(M)
\end{align}
for $M \in \mathcal{M}$, where the first isomorphism is due to the assumption that $F^{\rradj}$ is right exact, the second to the $\Vect$-module structure of the $\bfk$-linear functor $F^{\rradj}$, the third to Lemma~\ref{lem:coends-adjunction} and the last to the adjunction $F \dashv F^{\radj}$.
In summary, we have an isomorphism
\begin{equation}
  \label{eq:Naka-dbl-right-adj}
  F^{\rradj} \circ \Nak \cong \Nak \circ F,
\end{equation}
as stated in \cite[Theorem 3.18]{MR4042867}. If $F = (-) \catactr V$ for some $V \in \mathcal{C}$, then we have $F^{\radj} = (-) \catactr V^*$ and $F^{\rradj} = (-) \catactr V^{**}$ in view of \eqref{eq:action-adj-1}. Thus a natural isomorphism
\begin{equation}
  \label{eq:Naka-twisted-mod-str-def}
  \Nak(M) \catactr V^{**}
  = F^{\rradj}(\Nak(M))
  \xrightarrow{\ \eqref{eq:Naka-dbl-right-adj} \ } \Nak(F(M))
  = \Nak(M \catactr V)
  \quad (M \in \mathcal{M})
\end{equation}
is induced. This makes $\Nak: \mathcal{M} \to \mathcal{M}_{\DLD}$ a right $\mathcal{C}$-module functor.

For later use, we remark the following relation between the right $\mathcal{C}$-module structure of $\Nak$ and the closing operator.

\begin{lemma}
  \label{lem:Naka-tw-mod-lem-1}
  For $M, X \in \mathcal{M}$ and $V \in \mathcal{C}$, the following diagram commutes:
  \begin{equation*}
    \begin{tikzcd}[column sep = 8em]
      \Nak(M) \catactr V^{**}
      \arrow[dd, "\Psi^{\Nak}_{M,V}"']
      & (\Hom_{\mathcal{M}}(M, X)^* \vectactl X) \catactr V^{**}
      \arrow[d, "\text{\eqref{eq:cano-Vec-mod-str} with $F = (-) \catactr V^{**}$}"]
      \arrow[l, "\mathbb{i}_{X,M} \catactr \id_{V^{**}}"'] \\
      & \Hom_{\mathcal{M}}(M, X)^* \vectactl (X \catactr V^{**})
      \arrow[d, "(\close_{M, X | V})^* \vectactl \id_{X \catactr V^{**}}"]\\
      \Nak(M \catactr V)
      & \Hom_{\mathcal{M}}(M \catactr V, X \catactr V^{**})^* \vectactl (X \catactr V^{**})
      \arrow[l, "\mathbb{i}_{X \catactr V^{**}, M \catactr V}"]
    \end{tikzcd}
  \end{equation*}
\end{lemma}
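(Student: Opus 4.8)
The plan is to verify the commutativity by testing it against the universal property of a coend. As recorded in the construction leading to \eqref{eq:Naka-adj-1} (applied with $F=(-)\catactr V$), the functor $F^{\rradj}=(-)\catactr V^{**}$ preserves the coend $\Nak(M)=\int^{X}\Hom_{\mathcal{M}}(M,X)^{*}\vectactl X$, so the family $\{\mathbb{i}_{X,M}\catactr\id_{V^{**}}\}_{X\in\mathcal{M}}$ is a universal dinatural transformation exhibiting $\Nak(M)\catactr V^{**}$ as the coend $\int^{X}(\Hom_{\mathcal{M}}(M,X)^{*}\vectactl X)\catactr V^{**}$. Consequently it suffices to show that the two composites $(\Hom_{\mathcal{M}}(M,X)^{*}\vectactl X)\catactr V^{**}\to\Nak(M\catactr V)$ obtained by the two ways around the square agree for each $X$; that is, I must compute $\Psi^{\Nak}_{M,V}\circ(\mathbb{i}_{X,M}\catactr\id_{V^{**}})$ and compare it with $\mathbb{i}_{X\catactr V^{**},\,M\catactr V}\circ\bigl((\close_{M,X\mid V})^{*}\vectactl\id_{X\catactr V^{**}}\bigr)$ precomposed with the canonical isomorphism relating $(\Hom_{\mathcal{M}}(M,X)^{*}\vectactl X)\catactr V^{**}$ and $\Hom_{\mathcal{M}}(M,X)^{*}\vectactl(X\catactr V^{**})$, which is an instance of \eqref{eq:cano-Vec-mod-str}.

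To do so I would unwind the construction of $\Psi^{\Nak}_{M,V}$. Writing $F=(-)\catactr V$, so that $F^{\radj}=(-)\catactr V^{*}$ and $F^{\rradj}=(-)\catactr V^{**}$, the morphism $\Psi^{\Nak}_{M,V}$ is by \eqref{eq:Naka-twisted-mod-str-def} the composite of the four isomorphisms \eqref{eq:Naka-adj-1}--\eqref{eq:Naka-adj-4}, and I would follow the image of $\mathbb{i}_{X,M}\catactr\id_{V^{**}}$ through each in turn. Step \eqref{eq:Naka-adj-1} is the mere observation that $F^{\rradj}$ preserves this coend and contributes nothing new. Step \eqref{eq:Naka-adj-2} reinserts the $\Vect$-module constraint $\Psi^{F^{\rradj}}_{\Hom_{\mathcal{M}}(M,X)^{*},X}$, which is exactly the canonical isomorphism appearing as the right-hand vertical arrow of the diagram. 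Step \eqref{eq:Naka-adj-3} is an instance of Lemma~\ref{lem:coends-adjunction} for the adjunction $F^{\radj}\dashv F^{\rradj}$; by the displayed formula for the comparison map $\overline{\phi}$ it contributes, at the level of integrands, the linear dual $\Hom_{\mathcal{M}}(M,\varepsilon_{X})^{*}$ of the counit $\varepsilon_{X}\colon(X\catactr V^{**})\catactr V^{*}\to X$ of that adjunction (which is $\id_{X}\catactr\eval_{V^{*}}$), together with a shift of the index of the universal dinatural transformation from $X$ to $X\catactr V^{**}$. Finally step \eqref{eq:Naka-adj-4} uses the adjunction $F\dashv F^{\radj}$ to replace $\Hom_{\mathcal{M}}(M,(X\catactr V^{**})\catactr V^{*})$ by $\Hom_{\mathcal{M}}(M\catactr V,X\catactr V^{**})$ via the linear dual of the adjunction bijection $(-)^{\flat}$ of \eqref{eq:action-adj-1} for the object $V$, and identifies the coend that results with $\Nak(M\catactr V)$, turning the relevant universal dinatural transformation into $\mathbb{i}_{X\catactr V^{**},\,M\catactr V}$.

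Assembling the contributions of the four steps, $\Psi^{\Nak}_{M,V}\circ(\mathbb{i}_{X,M}\catactr\id_{V^{**}})$ equals $\mathbb{i}_{X\catactr V^{**},\,M\catactr V}$ postcomposed with the $\vectactl$-tensor (with $\id_{X\catactr V^{**}}$) of $\bigl((-)^{\flat}\bigr)^{*}\circ\Hom_{\mathcal{M}}(M,\id_{X}\catactr\eval_{V^{*}})^{*}$, and all of this precomposed with the isomorphism of \eqref{eq:cano-Vec-mod-str}. Now $\bigl((-)^{\flat}\bigr)^{*}\circ\Hom_{\mathcal{M}}(M,\id_{X}\catactr\eval_{V^{*}})^{*}$ is the linear dual of
\[
  g\ \longmapsto\ (\id_{X}\catactr\eval_{V^{*}})\circ(g\catactr\id_{V^{*}})\circ(\id_{M}\catactr\coev_{V})
  \qquad\bigl(g\in\Hom_{\mathcal{M}}(M\catactr V,\,X\catactr V^{**})\bigr),
\]
and this map is precisely $\close_{M,X\mid V}$ by its very definition. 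Hence the computed composite coincides with the right-and-bottom composite of the diagram, which is what we wanted.

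The step I expect to be the main obstacle is the third one together with the bookkeeping in the final identification: one must pin down the counit of $F^{\radj}\dashv F^{\rradj}$ and the adjunction bijection for $F\dashv F^{\radj}$ in terms of $\eval_{V^{*}}$ and $\coev_{V}$, and then check that, after applying the contravariant functor $\Hom_{\mathcal{M}}(-,X\catactr V^{**})$ and the linear dual, these two ingredients compose into the single zig-zag defining $\close_{M,X\mid V}$ with no stray duality or coherence isomorphism left over. Carrying out the whole computation in string-diagram form — where each of \eqref{eq:Naka-adj-1}--\eqref{eq:Naka-adj-4} merely slides the $V^{*}$- and $V^{**}$-strands around — should make this transparent, the closing operator emerging literally as the closed $V$-loop.
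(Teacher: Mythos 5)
Your proposal is correct and follows essentially the same route as the paper's proof: unwind $\Psi^{\Nak}_{M,V}$ through the chain \eqref{eq:Naka-adj-1}--\eqref{eq:Naka-adj-4}, using the explicit comparison map of Lemma~\ref{lem:coends-adjunction} (counit $\id_X \catactr \eval_{V^*}$ of $(-)\catactr V^* \dashv (-)\catactr V^{**}$, index shift to $X \catactr V^{**}$) and the dual of \eqref{eq:action-adj-1}, and then observe that $g \mapsto (\id_X \catactr \eval_{V^*}) \circ g^{\flat}$ is exactly $\close_{M,X|V}$. The paper packages the same computation as a commutative diagram with the intermediate map $c$ and the single verification $c \circ \eqref{eq:action-adj-1} = \close_{M,X|V}$, which is precisely your final identification.
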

\begin{proof}
  We fix $M, X \in \mathcal{M}$ and $V \in \mathcal{C}$ and introduce the $\bfk$-linear map
  \begin{equation*}
    c : \Hom_{\mathcal{M}}(M, X \catactr V^{**} \catactr V^{*}) \to \Hom_{\mathcal{M}}(M, X),
    \quad f \mapsto (\id_{X} \catactr \eval_{V^*}) \circ f.
  \end{equation*}
  By the definition \eqref{eq:Naka-adj-1}--\eqref{eq:Naka-twisted-mod-str-def} of the right $\mathcal{C}$-module structure of the Nakayama functor, the following diagram is commutative:
  \begin{equation*}
    \begin{tikzcd}[column sep = 8em]
      \Nak(M) \catactr V^{**}
      \arrow[ddd, "{\Psi^{\Nak}_{M,V}}"']
      & (\Hom_{\mathcal{M}}(M, X)^* \vectactl X) \catactr V^{**}
      \arrow[d, "\text{\eqref{eq:cano-Vec-mod-str} with $F = (-) \catactr V^{**}$}"]
      \arrow[l, "\mathbb{i}_{X,M} \catactr \id_{V^{**}}"'] \\
      & \Hom_{\mathcal{M}}(M, X)^* \vectactl (X \catactr V^{**})
      \arrow[d, "c^* \vectactl \id_{X \catactr V^{**}}"] \\
      & \Hom_{\mathcal{M}}(M, X \catactr V^{**} \catactr V^{*})^* \vectactl (X \catactr V^{**})
      \arrow[d, "\eqref{eq:action-adj-1}^* \vectactl \id_{X \catactr V^{**}}"] \\
      \Nak(M \catactr V)
      & \Hom_{\mathcal{M}}(M \catactr V, X \catactr V^{**})^* \vectactl (X \catactr V^{**})
      \arrow[l, "\mathbb{i}_{X \catactr V^{**}, M \catactr V}"]
    \end{tikzcd}
  \end{equation*}
  One can verify that the composition
  \begin{equation*}
    \Hom_{\mathcal{M}}(M \catactr V, X \catactr V^{**})
    \xrightarrow{\ \eqref{eq:action-adj-1} \ }
    \Hom_{\mathcal{M}}(M, X \catactr V^{**} \catactr V^{*})
    \xrightarrow{\ c \ } \Hom_{\mathcal{M}}(M, X)
  \end{equation*}
  is equal to $\close_{M,X | V}$. The proof is done.
\end{proof}

\subsection{Module-compatibility of the canonical Nakayama-twisted trace}

As we have recalled, the Nakayama functor $\Nak = \Nak_{\mathcal{M}}$ belongs to the category $\Rex_{\mathcal{C}}(\mathcal{M}, \mathcal{M}_{\DLD})$.
The canonical $\Nak$-twisted trace $\nakac{\trace}_{\bullet}$, introduced in Subsection~\ref{subsec:cano-Naka-tw-tr}, has the following important property:

\begin{lemma}
  \label{lem:Naka-tw-mod-lem-2}
  $\nakac{\trace}_{\bullet}$ is module-compatible.
\end{lemma}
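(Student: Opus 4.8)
The plan is to verify the module-compatibility equation \eqref{eq:tw-tr-module-compatibitlity} for $\trace_{\bullet} = \nakac{\trace}_{\bullet}$ and $\Sigma = \Nak$ directly from the defining commutative diagram \eqref{eq:can-Naka-tw-tr-def} of the canonical Nakayama-twisted trace, together with the compatibility between $\Psi^{\Nak}$ and the closing operator recorded in Lemma~\ref{lem:Naka-tw-mod-lem-1}. Fix $P \in \Proj(\mathcal{M})$, $X \in \mathcal{C}$ and $f \in \Hom_{\mathcal{M}}(P \catactr X, \Nak(P) \catactr X^{**})$; we must show $\nakac{\trace}_{P}(\close_{P, \Nak(P)|X}(f)) = \nakac{\trace}_{P \catactr X}(\Psi^{\Nak}_{P,X} \circ f)$. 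Since $P \catactr X$ is again projective, both $\Hom_{\mathcal{M}}(P, -)$ and $\Hom_{\mathcal{M}}(P \catactr X, -)$ preserve the coends defining $\Nak(P)$ and $\Nak(P \catactr X)$, so it suffices to test the equation against elements coming through the universal dinatural transformations $\mathbb{i}_{Y, P}$ (resp. $\mathbb{i}_{Y, P\catactr X}$), i.e. to reduce to morphisms built from $\mathbb{i}$'s via the canonical $\Vect$-module structure isomorphism \eqref{eq:cano-Vect-act-Hom-2}. This is the standard "it is enough to check on generators of a coend" move.

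Concretely, I would take an arbitrary element in $\Hom_{\mathcal{M}}(P \catactr X, \Nak(P) \catactr X^{**})$ of the form $f = (\mathbb{i}_{Y, P} \catactr \id_{X^{**}}) \circ g$ for $Y \in \mathcal{M}$ and $g : P \catactr X \to (\Hom_{\mathcal{M}}(P, Y)^* \vectactl Y) \catactr X^{**}$, and compute both sides. For the right-hand side, I feed $\Psi^{\Nak}_{P,X} \circ f$ through \eqref{eq:can-Naka-tw-tr-def} for $P \catactr X$; here Lemma~\ref{lem:Naka-tw-mod-lem-1} lets me rewrite $\Psi^{\Nak}_{P,X} \circ (\mathbb{i}_{Y, P} \catactr \id_{X^{**}})$ in terms of $\mathbb{i}_{Y \catactr X^{**}, P \catactr X}$, the $\Vect$-module structure map \eqref{eq:cano-Vec-mod-str} for $(-) \catactr X^{**}$, and the transpose $(\close_{P, Y|X})^* \vectactl \id$. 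For the left-hand side, I feed $\close_{P, \Nak(P)|X}(f)$ through \eqref{eq:can-Naka-tw-tr-def} for $P$; the key is that $\close_{P, \Nak(P)|X}$ applied to $(\mathbb{i}_{Y,P} \catactr \id_{X^{**}}) \circ g$ can be pushed past $\mathbb{i}_{Y,P}$ because $\close$ only touches the $\mathcal{C}$-strands, producing $\mathbb{i}_{Y,P}$ composed with a closing operation on the $\Hom_{\mathcal{M}}(P, Y)^* \vectactl Y$ factor. After this the two computations meet: both reduce to an evaluation pairing between $\Hom_{\mathcal{M}}(P, Y)^*$ and $\Hom_{\mathcal{M}}(P, Y)$, modified by $\close_{P, Y | X}$ on one side and by its transpose on the other — and the naturality of the evaluation pairing under a linear map and its dual finishes the identity.

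Alternatively — and this might be cleaner to write up — I would use the ring-theoretic description: choose $A$ with $\mathcal{M} \approx \lmod{A}$, so $\nakac{\trace}_{\bullet}$ is the twisted trace $\trace^{\lambda}_{\bullet}$ associated to the canonical $\lambda \in \HH_0(A^*)^*$ with $\lambda^{\natural} = \id_{A^*}$ (Lemma~\ref{lem:tw-tr-Nakayama-ring-theoretical} and the subsequent discussion), and compute both sides of \eqref{eq:tw-tr-module-compatibitlity} using the explicit formula \eqref{eq:HH0-trace-formula} and a pair of dual bases. However this requires unwinding the module-category action and the map \eqref{eq:Naka-twisted-mod-str-def} in terms of $A$, which is less transparent, so I expect the coend computation above to be the intended route.

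The main obstacle is bookkeeping: keeping the three canonical identifications — the $\Vect$-module structure \eqref{eq:cano-Vect-act-Hom-2}/\eqref{eq:cano-Vec-mod-str}, the coend-preservation by $\Hom_{\mathcal{M}}(P,-)$ and $\Hom_{\mathcal{M}}(P\catactr X,-)$, and the $\mathcal{C}$-module structure \eqref{eq:Naka-twisted-mod-str-def} of $\Nak$ — aligned simultaneously, and making sure that the closing operator on $\mathcal{M}$ and the "closing" built into Lemma~\ref{lem:Naka-tw-mod-lem-1} are genuinely the same map. Once Lemma~\ref{lem:Naka-tw-mod-lem-1} is in hand, no genuinely new idea is needed; the proof is a diagram chase glueing \eqref{eq:can-Naka-tw-tr-def} to that lemma and invoking dinaturality of $\mathbb{i}$ and naturality of the evaluation.
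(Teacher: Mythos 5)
Your proposal is correct and follows essentially the same route as the paper's proof: reduce, via projectivity of $P \catactr X$ and preservation of the coend, to checking the identity on the components of the universal dinatural transformation, then combine Lemma~\ref{lem:Naka-tw-mod-lem-1}, the defining diagram \eqref{eq:can-Naka-tw-tr-def} of $\nakac{\trace}_{\bullet}$, the compatibility of the closing operator with the canonical $\Vect$-module structures, and naturality of $\close$ and of the evaluation pairing. The only cosmetic point is that a general $f$ is a finite sum of morphisms of the form $(\mathbb{i}_{Y,P} \catactr \id_{X^{**}}) \circ g$ rather than a single one, which linearity handles, exactly as in the paper's diagram chase.
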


\begin{proof}
  We write the Hom functor of $\mathcal{M}$ as $[\,,\,]$ to save spaces in the proof of this lemma.
  Let $P \in \Proj(\mathcal{M})$ and $V \in \mathcal{C}$.
  Since $P \catactr V$ is projective, we have
  \begin{equation*}
    [P \catactr V, \Nak(P) \catactr V^{**}]
    = \int^{X \in \mathcal{M}} [P \catactr V, ([X, P]^* \vectactl X) \catactr V^{**}]
  \end{equation*}
  with the universal dinatural transformation $\mathbb{j}_X := [P \catactr V, \mathbb{i}_{X,P} \catactr \id_{V^{**}}]$.
  Thus, to prove this lemma, it suffices to show that the equation
  \begin{equation}
    \label{eq:Naka-tw-mod-lem-2-proof-1}
    \nakac{\trace}_{P \catactr V} \circ [P \catactr V, \Psi^{\Nak}_{P,V}] \circ \mathbb{j}_X
    = \nakac{\trace}_P \circ \close_{P,\Nak(P) | V} \circ \mathbb{j}_X
  \end{equation}
  holds for all objects $X \in \mathcal{M}$.

  We first verify that the diagram given as Figure~\ref{fig:proof-Naka-tw-mod-lem-2} is commutative.
  To see the commutativity of the cell labeled (C1), we define three $\bfk$-linear functors $E : \mathcal{M} \to \Vect$, $F : \mathcal{M} \to \mathcal{M}$ and $G: \mathcal{M} \to \Vect$ by
  \begin{gather*}
    E(M) = [P \catactr V, M],
    \quad F(M) = M \catactr V^{**},
    \quad G(M) = [P, M]
  \end{gather*}
  for $M \in \mathcal{M}$, respectively.
  Every $\bfk$-linear functor between finite abelian categories has a canonical structure of a $\Vect$-module functor.
  Lemma~\ref{lem:cano-Vect} implies that the natural transformation $\close_{P, M | V} : E F(M) \to G(M)$ ($M \in \mathcal{M}$) gives rise to a morphism of $\Vect$-module functors. Namely, the diagram
  \begin{equation*}
    \begin{tikzcd}[column sep = 4em]
      E F(W \otimes M)
      \arrow[d, "\close_{P, W \otimes X | V}"']
      \arrow[r, "\eqref{eq:cano-Vec-mod-str}"]
      & E(W \otimes F(M))
      \arrow[r, "\eqref{eq:cano-Vect-act-Hom-2}"]
      & W \otimes E F(M)
      \arrow[d, "\id_W \otimes \close_{P, X | V}"] \\
      G(W \otimes M)
      \arrow[rr, "\eqref{eq:cano-Vect-act-Hom-2}"]
      & & W \otimes G(M)
    \end{tikzcd}
  \end{equation*}
  commutes for all $M \in \mathcal{M}$ and $W \in \Vect$.
  Letting $W = [P, X]^*$ and $M = X$, we find that the cell (C1) is commutative.
  The cells (C2), (C3) and (C4) are also commutative by the naturality of \eqref{eq:cano-Vect-act-Hom-2}, the definition of the dual map, and the commutative diagram \eqref{eq:can-Naka-tw-tr-def} defining $\nakac{\trace}$, respectively.
  Thus the diagram of Figure~\ref{fig:proof-Naka-tw-mod-lem-2} is commutative.

  By Lemma~\ref{lem:Naka-tw-mod-lem-1}, the left row of the diagram is equal to the map
  \begin{equation*}
    [P \catactr V, \Psi^{\Nak}_{P,V} (\mathbb{i}_{X,P} \catactr \id_{V^{**}})]:
    [P \catactr V, ([P, X]^* \otimes X) \catactr V^{**}]
    \to [P \catactr V, \Nak(P \catactr V)].
  \end{equation*}
  By \eqref{eq:can-Naka-tw-tr-def}, the right row is equal to $\nakac{\trace}_{P} \circ [P, \mathbb{i}_{X,P}] : [P, [P, X]^* \otimes X] \to \bfk$. Hence, by the commutativity of the diagram, we have
  \begin{equation*}
    \nakac{\trace}_{P \catactr V} \circ [P \catactr V, \Psi^{\Nak}_{P,V} (\mathbb{i}_{X,P} \catactr \id_{V^{**}})]
    = \nakac{\trace}_{P} \circ [P, \mathbb{i}_{X,P}] \circ \close_{P, [P, X]^* \vectactl X | V}.
  \end{equation*}
  The left-hand side of this equation is equal to that of \eqref{eq:Naka-tw-mod-lem-2-proof-1}. The right-hand side is equal to that of \eqref{eq:Naka-tw-mod-lem-2-proof-1} by the naturality of $\close_{P, - | V}$. The proof is done.
\end{proof}

\begin{figure}
  \centering
  \makebox[\textwidth][c]{
    \begin{tikzcd}[ampersand replacement=\&, row sep = 36pt, column sep = 24pt]
      {[P \catactr V, ([P, X]^* \otimes X) \catactr V^{**}]}
      \arrow[d, "{\scriptstyle \text{\eqref{eq:cano-Vec-mod-str} with $F = (-) \catactr V^{**}$}}"']
      \arrow[rr, "{\close_{P, [P, X]^* \vectactl X | V}}"]
      \arrow[rrd, phantom, "\text{(C1)}"]
      \& \& {[P, [P, X]^* \otimes X]}
      \arrow[d, "\eqref{eq:cano-Vect-act-Hom-2}"]
      \\ 
      {[P \catactr V, [P, X]^* \vectactl (X \catactr V^{**})]}
      \arrow[r, "\eqref{eq:cano-Vect-act-Hom-2}"]
      \arrow[d, "{[P \catactr V, (\close_{P,X | V})^* \vectactl (X \catactr V^{**})]}"']
      \arrow[rd, phantom, "\text{(C2)}"]
      \& {[P, X]^* \otimes_{\bfk} [P \catactr V, X \catactr V^{**}]}
      \arrow[d, "{(\close_{P,X | V})^* \otimes \id}"]
      \arrow[r, "{\id \otimes \close_{P,X | V}}"]
      \arrow[rd, phantom, "\text{(C3)}"]
      \& {[P, X]^* \otimes_{\bfk} [P, X]}
      \arrow[d, "{\eval_{[P,X]}}"]
      \\ 
      {[P \catactr V, [P \catactr V, X \catactr V^{**}]^* \vectactl (X \catactr V^{**})]}
      \arrow[r, "\eqref{eq:cano-Vect-act-Hom-2}"]
      \arrow[d, "{[P \catactr V, \mathbb{i}_{X \catactr V^{**}, P \catactr V}]}"']
      \& {[P \catactr V, X \catactr V^{**}]^* \otimes_{\bfk} [P \catactr V, X \catactr V^{**}]}
      \arrow[r, "{\eval_{[P \catactr V, X \catactr V^{**}]}}" {yshift={2pt}}]
      \& \bfk
      \arrow[d, equal]
      \\ 
      {[P \catactr V, \Nak(P \catactr V)]}
      \arrow[rr, "{\nakac{\trace}_{P \catactr V}}"]
      \arrow[rru, phantom, "\text{(C4)}"]
      \& \& \bfk
    \end{tikzcd}}
  \caption{Proof of Lemma \ref{lem:Naka-tw-mod-lem-2}}
  \label{fig:proof-Naka-tw-mod-lem-2}
\end{figure}

\subsection{Module-compatibility of twisted traces (the general case)}

Let $\Sigma$ be an object of the category $\Rex_{\mathcal{C}}(\mathcal{M}, \mathcal{M}_{\DLD})$. We note that the Nakayama functor $\Nak := \Nak_{\mathcal{M}}$ also belongs to this category.

\begin{definition}
  A natural transformation $\xi : \Sigma \to \Nak$ is said to be {\em compatible with the right $\mathcal{C}$-module structure} (or {\em module-compatible} for short) if it is a morphism in the category $\Rex_{\mathcal{C}}(\mathcal{M}, \mathcal{M}_{\DLD})$.
\end{definition}

We define $\bfk$-linear functors $\mathbb{h}^*, \mathbb{h}_{\Sigma} : \mathcal{M}^{\op} \times \Proj(\mathcal{M}) \to \Vect$ as in Subsection~\ref{subsec:cat-theo-reform}.

\begin{definition}
  A natural transformation $\alpha : \mathbb{h}_{\Sigma} \to \mathbb{h}^*$ is said to be {\em compatible with the right $\mathcal{C}$-module structure} (or {\em module-compatible} for short) if, for all objects $M \in \mathcal{M}$, $P \in \Proj(\mathcal{M})$ and $X \in \mathcal{C}$, the following diagram commutes:
  \begin{equation*}
    \begin{tikzcd}[column sep = 8em, row sep = 1.5em]
      \Hom_{\mathcal{M}}(M \catactr X^*, \Sigma(P))
      \arrow[d, "\eqref{eq:action-adj-1}"']
      \arrow[r, "\alpha_{M \catactr X^*, P}"]
      & \Hom_{\mathcal{M}}(P, M \catactr X^*)^*
      \arrow[dd, "\eqref{eq:action-adj-1}^*"] \\
      \Hom_{\mathcal{M}}(M, \Sigma(P) \catactr X^{**})
      \arrow[d, "{\Hom_{\mathcal{M}}(M, \Psi^{\Sigma}_{P,X})}"'] \\
      \Hom_{\mathcal{M}}(M, \Sigma(P \catactr X))
      \arrow[r, "\alpha_{M, P \catactr X}"]
      & \Hom_{\mathcal{M}}(P \catactr X, M)^*
    \end{tikzcd}
  \end{equation*}
\end{definition}

The commutativity of the above diagram is equivalent to that the equation
\begin{equation}
  \label{eq:mod-compati-def-alpha}
  \langle \alpha_{M, P \catactr X}(\Psi^{\Sigma}_{P,X} \circ f^{\flat}), g \rangle
  = \langle \alpha_{M \catactr X^*, P}(f), g^{\flat} \rangle
\end{equation}
holds for all $f \in \Hom_{\mathcal{M}}(M \catactr X^*, \Sigma(P))$
and $g \in \Hom_{\mathcal{M}}(P \catactr X, M)$.

Now the main result of this section is stated as follows:

\begin{theorem}
  \label{thm:tw-tr-module-compati}
  The three bijections
  \begin{equation*}
    \Nat(\Sigma, \Nak)
    \xrightarrow{\quad \Phi_1 \quad}
    \TwTr(\Sigma)
    \xrightarrow{\quad \Phi_2 \quad}
    \Nat(\mathbb{h}_{\Sigma}, \mathbb{h}^*)
    \xrightarrow{\quad \Phi_3 \quad}
    \Nat(\Sigma, \Nak)
  \end{equation*}
  introduced in Subsection~\ref{subsec:cat-theo-reform} restrict to bijections between the following three sets:
  \begin{itemize}
  \item The set of module-compatible $\Sigma$-twisted traces on $\Proj(\mathcal{M})$.
  \item The set of module-compatible natural transformations $\Sigma \to \Nak$.
  \item The set of module-compatible natural transformations $\mathbb{h}_{\Sigma} \to \mathbb{h}^*$.
  \end{itemize}
\end{theorem}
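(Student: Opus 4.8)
The strategy is to mimic the proof of the theorem in Subsection~\ref{subsec:cat-theo-reform}. Since $\Phi_3\Phi_2\Phi_1$ and $\Phi_1\Phi_3\Phi_2$ are the identity and each of $\Phi_1,\Phi_2,\Phi_3$ is already known to be a bijection, it suffices to verify the three inclusions
\[
  \Phi_1(\mathfrak{Y}_1)\subseteq\mathfrak{Y}_2,\qquad
  \Phi_2(\mathfrak{Y}_2)\subseteq\mathfrak{Y}_3,\qquad
  \Phi_3(\mathfrak{Y}_3)\subseteq\mathfrak{Y}_1,
\]
where $\mathfrak{Y}_1$, $\mathfrak{Y}_2$, $\mathfrak{Y}_3$ denote, respectively, the set of module-compatible natural transformations $\Sigma\to\Nak$, the set of module-compatible $\Sigma$-twisted traces on $\Proj(\mathcal{M})$, and the set of module-compatible natural transformations $\mathbb{h}_{\Sigma}\to\mathbb{h}^{*}$; chaining these inclusions then forces all of them to be equalities, whence $\Phi_1,\Phi_2,\Phi_3$ restrict to bijections among the $\mathfrak{Y}_i$.

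For $\Phi_1(\mathfrak{Y}_1)\subseteq\mathfrak{Y}_2$: take $\xi\colon\Sigma\to\Nak$ with $\Psi^{\Nak}_{M,X}\circ(\xi_M\catactr\id_{X^{**}})=\xi_{M\catactr X}\circ\Psi^{\Sigma}_{M,X}$ and put $\trace^{\xi}_{\bullet}=\Phi_1(\xi)$. For $P\in\Proj(\mathcal{M})$, $X\in\mathcal{C}$ and $f\colon P\catactr X\to\Sigma(P)\catactr X^{**}$, the formula defining the closing operator together with the interchange law gives $\xi_P\circ\close_{P,\Sigma(P)|X}(f)=\close_{P,\Nak(P)|X}\bigl((\xi_P\catactr\id_{X^{**}})\circ f\bigr)$. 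Hence $\trace^{\xi}_P(\close_{P,\Sigma(P)|X}(f))=\nakac{\trace}_P\bigl(\close_{P,\Nak(P)|X}((\xi_P\catactr\id_{X^{**}})\circ f)\bigr)$, which by Lemma~\ref{lem:Naka-tw-mod-lem-2} equals $\nakac{\trace}_{P\catactr X}\bigl(\Psi^{\Nak}_{P,X}\circ(\xi_P\catactr\id_{X^{**}})\circ f\bigr)$ and then, by module-compatibility of $\xi$, equals $\nakac{\trace}_{P\catactr X}(\xi_{P\catactr X}\circ\Psi^{\Sigma}_{P,X}\circ f)=\trace^{\xi}_{P\catactr X}(\Psi^{\Sigma}_{P,X}\circ f)$. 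So $\trace^{\xi}_{\bullet}$ is module-compatible.

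For $\Phi_2(\mathfrak{Y}_2)\subseteq\mathfrak{Y}_3$: let $\trace_{\bullet}$ be module-compatible and $\alpha=\Phi_2(\trace_{\bullet})$. Unwinding the pairings, the commutative square defining module-compatibility of $\alpha$ (equation \eqref{eq:mod-compati-def-alpha}) amounts to $\trace_{P\catactr X}(\Psi^{\Sigma}_{P,X}\circ f^{\flat}\circ g)=\trace_P(f\circ g^{\flat})$ for $f\colon M\catactr X^{*}\to\Sigma(P)$ and $g\colon P\catactr X\to M$. Applying module-compatibility of $\trace_{\bullet}$ to $\tilde f:=f^{\flat}\circ g$ turns the left side into $\trace_P(\close_{P,\Sigma(P)|X}(f^{\flat}\circ g))$, and a direct computation using the zigzag identity for the dual pair $(X^{*},X^{**})$ shows $\close_{P,\Sigma(P)|X}(f^{\flat}\circ g)=f\circ g^{\flat}$, which gives the desired equality. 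Finally, for $\Phi_3(\mathfrak{Y}_3)\subseteq\mathfrak{Y}_1$: applying the inclusion just proved with $\Sigma$ replaced by $\Nak$ and invoking Lemma~\ref{lem:Naka-tw-mod-lem-2}, the natural transformation $\nakac{\alpha}:=\Phi_2(\nakac{\trace}_{\bullet})$ is module-compatible. Now let $\alpha\in\mathfrak{Y}_3$ and $\xi=\Phi_3(\alpha)$, so that $\langle\alpha_{M,P}(h),k\rangle=\nakac{\trace}_P(\xi_P\circ h\circ k)$ for all $h,k$. Both sides of $\Psi^{\Nak}_{M,X}\circ(\xi_M\catactr\id_{X^{**}})=\xi_{M\catactr X}\circ\Psi^{\Sigma}_{M,X}$ are natural transformations between right exact functors in $M$, so by Lemma~\ref{lem:restriction-to-proj} it suffices to establish it for $M=P\in\Proj(\mathcal{M})$; and since $\nakac{\alpha}_{M,P\catactr X}$ is an isomorphism (non-degeneracy of $\nakac{\beta}$), it is enough to check that the two sides agree after post-composing with an arbitrary $h\colon M\to\Sigma(P)\catactr X^{**}$ and pairing through $\nakac{\trace}_{P\catactr X}$ against an arbitrary $g\colon P\catactr X\to M$. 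Writing $h=f^{\flat}$ (possible and unique since $(-)^{\flat}$ is bijective) and using $(\xi_P\catactr\id_{X^{**}})\circ f^{\flat}=(\xi_P\circ f)^{\flat}$, the left side becomes an instance of \eqref{eq:mod-compati-def-alpha} for $\nakac{\alpha}$ and the right side an instance of \eqref{eq:mod-compati-def-alpha} for $\alpha$; since both hold, the two sides coincide, so $\xi$ is module-compatible.

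The only genuinely laborious computation, namely that the canonical Nakayama-twisted trace is module-compatible, is already available as Lemma~\ref{lem:Naka-tw-mod-lem-2}; the remaining work is bookkeeping with the adjunction isomorphisms $(-)^{\flat}$ and their linear duals, the identity $\close_{P,\Sigma(P)|X}(f^{\flat}\circ g)=f\circ g^{\flat}$, and the Yoneda-type reduction to projectives used for $\Phi_3$. I expect this last reduction---correctly identifying the pairing that detects equality of morphisms into $\Nak(P\catactr X)$ and matching it against the two forms of \eqref{eq:mod-compati-def-alpha}---to be the subtlest point, though it presents no essential difficulty.
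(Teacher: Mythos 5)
Your proposal is correct and follows essentially the same route as the paper: reduce to the three inclusions $\Phi_i(\mathfrak{Y}_i)\subseteq\mathfrak{Y}_{i+1}$, use Lemma~\ref{lem:Naka-tw-mod-lem-2} together with naturality of the closing operator for $\Phi_1$, the zigzag identity $\close_{P,\Sigma(P)|X}(f^{\flat}\circ g)=f\circ g^{\flat}$ for $\Phi_2$, and for $\Phi_3$ the module-compatibility of $\alpha$ and of $\nakac{\alpha}$ (the latter from step two applied to $\nakac{\trace}_{\bullet}$) followed by the reduction from $\mathcal{M}$ to $\Proj(\mathcal{M})$ via Lemma~\ref{lem:restriction-to-proj}. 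The only difference is presentational: you carry out the third step by equating pairings, where the paper chases $\id_{\Sigma(P)\catactr X^{**}}$ around the commutative diagram of Figure~\ref{fig:tw-tr-module-compati-3}, and you make explicit the justification of module-compatibility of $\nakac{\alpha}$ that the paper uses implicitly.
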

\begin{proof}
  For simplicity, we denote by $\mathfrak{Y}_i$ ($i = 1, 2, 3$) the set of module-compatible elements belonging to the source of the map $\Phi_i$. To prove this theorem, it suffices to show $\Phi_i(\mathfrak{Y}_i) \subset \mathfrak{Y}_{i+1}$ for each $i = 1, 2, 3$, where $\mathfrak{Y}_{4} = \mathfrak{Y}_1$.

  \smallskip
  \underline{(1) $\Phi_1(\mathfrak{Y}_1) \subset \mathfrak{Y}_{2}$}.
  Let $\xi : \Sigma \to \Nak$ be a natural transformation, and let $\trace^{\xi}_{\bullet}$ be the $\Sigma$-twisted trace corresponding to $\xi$.
  Suppose that $\xi$ is module-compatible.
  Then, for all objects $P \in \Proj(\mathcal{M})$ and $X \in \mathcal{C}$ and all morphisms $f : P \catactr X \to \Sigma(P) \catactr X^{**}$ in $\mathcal{M}$, we have
  \begin{gather*}
    \trace^{\xi}_{P \catactr X}(\Psi^{\Sigma}_{P,X} \circ f)
    \mathop{=}^{\eqref{eq:tw-trace-xi-def}}
    \nakac{\trace}_{P \catactr X}(\xi_{P \catactr X} \circ \Psi^{\Sigma}_{P,X} \circ f)
    = \nakac{\trace}_{P \catactr X}(\Psi^{\Nak}_{P,X} \circ (\xi_P \catactr \id_{X^{**}}) \circ f) \\
    = \nakac{\trace}_{P}(\close_{P,\Nak(P) | X}((\xi_P \catactr \id_{X^{**}}) \circ f))
    = \nakac{\trace}_{P}(\xi_P \circ \close_{P, \Sigma(P) | X}(f))
    \mathop{=}^{\eqref{eq:tw-trace-xi-def}}
    \trace^{\xi}_{P}(\close_{P, \Sigma(P) | X}(f)).
  \end{gather*}
  Here, the second equality follows from the module-compatibility of $\xi$, the third from Lemma~\ref{lem:Naka-tw-mod-lem-2}, and the fourth from the naturality of the closing operator. Hence $\Phi_1(\mathfrak{Y}_1) \subset \mathfrak{Y}_{2}$ is proved.

  \smallskip
  \underline{(2) $\Phi_2(\mathfrak{Y}_2) \subset \mathfrak{Y}_{3}$}.
  Let $\trace_{\bullet} \in \TwTr(\Sigma)$ be a module-compatible $\Sigma$-twisted trace on $\Proj(\mathcal{M})$, and set $\alpha = \Phi_2(\trace_{\bullet})$. We fix objects $M \in \mathcal{M}$, $P \in \Proj(\mathcal{M})$ and $X \in \mathcal{C}$. For all morphisms $f : M \catactr X^* \to \Sigma(P)$ and $g : P \catactr X \to M$ in $\mathcal{M}$, we have
  \begin{equation}
    \label{eq:tw-tr-module-compati-pf-2}
    \close_{P,\Sigma(P) | X}(f^{\flat} \circ g) =
    \begin{array}{c}
      \begin{tikzpicture}[x=1em,y=1em,baseline=0pt,thick]
        \node (B1) at (0, -2.25) [draw, rectangle] {\makebox[2em]{$f$}};
        \draw let \p1 = ($(B1.south)$)
        in (\p1) -- (\x1, -4) node [below] {$\Sigma(P)$};
        \draw let \p1 = ($(B1.north)-(.75,0)$)
        in (\p1) -- ++(0, 3) coordinate (T2);
        \node (B2) at (T2) [above, draw, rectangle] {\makebox[2em]{$g$}};
        \node at (B2.south) [below left] {$M$};
        \draw let \p1 = ($(B2.north)-(.75,0)$)
        in (\p1) -- (\x1, 4) node [above] {$P$};
        \coordinate (X1) at ($(B1.north west)$);
        \coordinate (X2) at ($(B1.south)+(2.25, -.5)$);
        \draw let \p1 = ($(B1.north)+(.75,0)$),
        \p2 = (X2),
        \p3 = ($(B2.north)+(.75, 0)$),
        \p4 = ($(\p3) + (3.5,0)$)
        in (\p1)
        to [out=90, in=90, looseness=2] (\x2, \y1)
        to (\p2)
        to [out=-90, in=-90, looseness=2] (\x4, \y2)
        to (\p4) node [right] {$X^*$}
        to [out=90, in=90, looseness=1.75] (\p3);
        \draw [dotted] let \p1 = ($(X1)+(-.5, 1.1)$), \p2 = ($(X2)+(.5, 0)$)
        in (\p1) -- (\x1, \y2) -- (\p2)
        -- (\x2, \y1) node [above left] {$f^{\flat}$}
        -- cycle;
      \end{tikzpicture}
    \end{array}
    \!\!\!\!=
    \begin{array}{c}
      \begin{tikzpicture}[x=1em,y=1em,baseline=0pt,thick]
        \node (B1) at (0, -2.25) [draw, rectangle] {\makebox[2em]{$f$}};
        \draw let \p1 = ($(B1.south)$)
        in (\p1) -- (\x1, -4) node [below] {$\Sigma(P)$};
        \draw let \p1 = ($(B1.north)-(.75,0)$)
        in (\p1) -- ++(0, 3) coordinate (T2);
        \node (B2) at (T2) [above, draw, rectangle] {\makebox[1.25em]{$g$}};
        \node at (B2.south) [below left] {$M$};
        \draw let \p1 = ($(B2.north)-(.5,0)$)
        in (\p1) -- (\x1, 4) node [above] {$P$};
        \draw let \p1 = ($(B2.north)+(.5,0)$),
        \p2 = ($(B1.north)+(.75,0)$)
        in (\p1) to [out=90, in=90, looseness=2] (\x2, \y1) -- (\p2);
      \end{tikzpicture}
    \end{array}
    = f \circ g^{\flat}.
  \end{equation}
  Now the equation~\eqref{eq:mod-compati-def-alpha} is verified as follows:
  \begin{gather*}
    \langle \alpha_{M, P \catactr X}(\Psi^{\Sigma}_{P,X} \circ f^{\flat}), g \rangle
    = \trace_{P \catactr X}(\Psi^{\Sigma}_{P,X} \circ f^{\flat} \circ g) \\
    \mathop{=}^{\eqref{eq:tw-tr-module-compatibitlity}}
    \trace_{P}(\close_{P,\Sigma(P) | X}(f^{\flat} \circ g))
    \mathop{=}^{\eqref{eq:tw-tr-module-compati-pf-2}}
    \trace_{P}(f \circ g^{\flat}) = \langle \alpha_{M \catactr X^*, P}(f), g^{\flat} \rangle.
  \end{gather*}
  Hence $\Phi_2(\mathfrak{Y}_2) \subset \mathfrak{Y}_{3}$ is proved.

  \smallskip
  \underline{(3) $\Phi_3(\mathfrak{Y}_3) \subset \mathfrak{Y}_{1}$}.
  Let $\alpha : \mathbb{h}_{\Sigma} \to \mathbb{h}^*$ be a module-compatible natural transformation, and let $\xi = \Phi_3(\alpha)$ be the corresponding natural transformation. For all objects $M \in \mathcal{M}$, $P \in \Proj(\mathcal{M})$ and $X \in \mathcal{C}$, there is the commutative diagram given as Figure \ref{fig:tw-tr-module-compati-3}. By the definition of $\xi$ and the naturality of \eqref{eq:action-adj-1}, the composition along the left column is
  \begin{equation*}
    \Hom_{\mathcal{M}}(M, \Sigma(P) \catactr X^{**})
    \to \Hom_{\mathcal{M}}(M, \Nak(P) \catactr X^{**}),
    \quad f \mapsto (\xi_P \catactr \id_{X^{**}}) \circ f.
  \end{equation*}
  By chasing the identity map $\id_{M} \in \Hom_{\mathcal{M}}(M, \Sigma(P) \catactr X^{**})$ with $M = \Sigma(P) \catactr X^{**}$ around the above diagram, we see that the equation
  \begin{equation}
    \label{eq:tw-tr-module-compati-pf-1}
    \xi_{P \catactr X} \circ \Psi^{\Sigma}_{P,X} = \Psi^{\Nak}_{P,X} \circ (\xi_{P} \catactr \id_{X^{**}})
  \end{equation}
  holds for all $P \in \Proj(\mathcal{M})$ and $X \in \mathcal{C}$.

  To show that $\xi$ is module-compatible, we shall show that the equation \eqref{eq:tw-tr-module-compati-pf-1} holds for all objects $P \in \mathcal{M}$ and $X \in \mathcal{C}$. We fix an object $X \in \mathcal{C}$ and introduce two $\bfk$-linear functors
  \begin{equation*}
    F, G : \mathcal{M} \to \mathcal{M},
    \quad F(M) = \Sigma(M) \catactr X^{**},
    \quad G(M) = \Nak(M \catactr X)
    \quad (M \in \mathcal{M}).
  \end{equation*}
  There is a natural transformation
  \begin{equation*}
    z_M : F(M) \to G(M),
    \quad z_M = \xi_{M \catactr X} \circ \Psi^{\Sigma}_{M,X} - \Psi^{\Nak}_{M,X} \circ (\xi_{M} \catactr \id_{X^{**}}).
  \end{equation*}
  Equation~\eqref{eq:tw-tr-module-compati-pf-1} implies that the natural transformation $z|_{\Proj(\mathcal{M})} : F|_{\Proj(\mathcal{M})} \to G|_{\Proj(\mathcal{M})}$ is zero. Since $\Sigma$ and $\Nak$ are right exact, so are $F$ and $G$. Thus, by Lemma~\ref{lem:restriction-to-proj}, $z = 0$. This means that \eqref{eq:tw-tr-module-compati-pf-1} holds for all $P \in \mathcal{M}$ and $X \in \mathcal{C}$. The proof is done.
\end{proof}

\begin{figure}
  \centering
  \begin{equation*}
    \begin{tikzcd}[column sep = 12em, row sep = 2em]
      \Hom_{\mathcal{M}}(M, \Sigma(P) \catactr X^{**})
      \arrow[ddr, phantom, "\scriptstyle \text{(module-compatibility of $\alpha$)}"]
      \arrow[r, "{\Hom_{\mathcal{M}}(M, \Psi^{\Sigma}_{P,X})}"]
      \arrow[d, "\eqref{eq:action-adj-1}"']
      & \Hom_{\mathcal{M}}(M, \Sigma(P \catactr X))
      \arrow[dd, "\alpha_{M, P \catactr X}"] \\
      \Hom_{\mathcal{M}}(M \catactr X^*, \Sigma(P))
      \arrow[d, "\alpha_{M \catactr X^*, P}"']
      \\
      \Hom_{\mathcal{M}}(P, M \catactr X^*)^*
      \arrow[r, "\eqref{eq:action-adj-1}^*"]
      \arrow[ddr, phantom, "\scriptstyle \text{(module-compatibility of $\nakac{\alpha}$)}"]
      & \Hom_{\mathcal{M}}(P \catactr X, M)^*
      \\
      \Hom_{\mathcal{M}}(M \catactr X^*, \Nak(P))
      \arrow[u, leftarrow, "(\nakac{\alpha}_{M \catactr X^*, P})^{-1}"] \\
      \Hom_{\mathcal{M}}(M, \Nak(P) \catactr X^{**})
      \arrow[u, leftarrow, "\text{\eqref{eq:action-adj-1}}"]
      \arrow[r, "{\Hom_{\mathcal{M}}(P, \Psi^{\Nak}_{P,X})}"]
      & \Hom_{\mathcal{M}}(M, \Nak(P \catactr X))
      \arrow[uu, leftarrow, "(\nakac{\alpha}_{M, P \catactr X})^{-1}"']
    \end{tikzcd}
  \end{equation*}
  \caption{Proof of $\Phi_3(\mathfrak{Y}_3) \subset \mathfrak{Y}_{1}$}
  \label{fig:tw-tr-module-compati-3}
\end{figure}

\begin{remark}[compatibility with left module structures]
  \label{rem:left-module-case}
  Let $\mathcal{M}$ be a finite abelian category, and let $\mathcal{C}$ be a rigid monoidal category acting linearly on $\mathcal{M}$ from the {\em left} (that is, $\mathcal{M}$ is equipped with structure of a left $\mathcal{C}$-module category and the functor $X \catactl \id_{\mathcal{M}}$ is $\bfk$-linear for all objects $X \in \mathcal{C}$).
  This assumption is equivalent to that $\mathcal{C}^{\rev}$ acts linearly on $\mathcal{M}$ from the {\em right}. Thus, by applying our results, we can know when twisted traces are compatible with the left $\mathcal{C}$-module structure.

  A technical remark is that the double left dual functor of $\mathcal{C}^{\rev}$ is the double right dual functor $\DRD = {}^{**}(-)$ of $\mathcal{C}$. Thus, in our framework, $\DRD$-twisted left $\mathcal{C}$-module functors are considered. They are functors $\Sigma$ equipped with structure morphism of the form ${}^{**}\!X \catactl \Sigma(M) \to \Sigma(X \catactl M)$.
\end{remark}

\section{Applications to finite tensor categories}
\label{sec:applications-to-ftc}

\subsection{Twisted traces for finite module categories}

In this section, we assume that the base field $\bfk$ is algebraically closed (since we will use results of \cite{MR2119143}, where the base field is assumed to be algebraically closed). A {\em finite tensor category} \cite{MR2119143,MR3242743} is a finite abelian category $\mathcal{C}$ equipped with a structure of a rigid monoidal category such that the tensor product $\mathcal{C} \times \mathcal{C} \to \mathcal{C}$ is $\bfk$-bilinear and the unit object $\unitobj \in \mathcal{C}$ is a simple object. We give some applications of our results to finite tensor categories.

Let $\mathcal{C}$ be a finite tensor category.
An {\em exact} right $\mathcal{C}$-module category \cite{MR2119143,MR3242743} is a finite abelian category $\mathcal{M}$ equipped with a structure of a right $\mathcal{C}$-module category such that the action $\catactr : \mathcal{M} \times \mathcal{C} \to \mathcal{M}$ is $\bfk$-bilinear and $M \catactr P$ is projective for all $M \in \mathcal{M}$ and $P \in \Proj(\mathcal{C})$.
An easy, but important observation is:

\begin{lemma}
  Let $\mathcal{C}$ be a finite tensor category, and let $\mathcal{M}$ be an indecomposable exact right $\mathcal{C}$-module category. Then the Nakayama functor is a simple object in the finite abelian category $\Rex_{\mathcal{C}}(\mathcal{M}, \mathcal{M}_{\DLD})$. 
\end{lemma}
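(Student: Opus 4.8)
The plan is to reduce the assertion to the elementary fact that the unit object of a finite tensor category is simple, using that the Nakayama functor of an exact module category is an equivalence.

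First I would recall that, since $\mathcal{M}$ is an exact module category over the finite tensor category $\mathcal{C}$, projective objects of $\mathcal{M}$ coincide with injective ones (see \cite{MR2119143}); consequently any finite-dimensional algebra $A$ with $\mathcal{M} \approx \lmod{A}$ is self-injective, so by Lemma~\ref{lem:Nakayama-A-mod} the Nakayama functor $\Nak_{\mathcal{M}} \cong A^{*} \otimes_{A} (-)$ is given by tensoring with an invertible $A$-bimodule and is therefore an autoequivalence of $\mathcal{M}$. (Equivalently, one may cite \cite{MR4042867}, where it is shown that the Nakayama functor of an exact module category over a finite tensor category is an equivalence.) Combining this with the twisted right $\mathcal{C}$-module structure $\Psi^{\Nak}$ recalled in Subsection~\ref{subsec:Naka-tw-module}, we obtain that $\Nak_{\mathcal{M}}$ is an equivalence of $\mathcal{C}$-module categories $\mathcal{M} \xrightarrow{\ \cong\ } \mathcal{M}_{\DLD}$; its quasi-inverse $\Nak_{\mathcal{M}}^{-1} : \mathcal{M}_{\DLD} \to \mathcal{M}$ inherits the structure of a (twisted) right $\mathcal{C}$-module functor, and both $\Nak_{\mathcal{M}}^{\pm 1}$ are exact.

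Next I would observe that post-composition $G \mapsto \Nak_{\mathcal{M}} \circ G$ defines a $\bfk$-linear equivalence $\Rex_{\mathcal{C}}(\mathcal{M}, \mathcal{M}) \xrightarrow{\ \cong\ } \Rex_{\mathcal{C}}(\mathcal{M}, \mathcal{M}_{\DLD})$, with quasi-inverse $H \mapsto \Nak_{\mathcal{M}}^{-1} \circ H$; this is well defined on the level of right exact module functors because a composite of right exact (module) functors is again a right exact (module) functor and $\Nak_{\mathcal{M}}^{\pm 1}$ are exact, and it is an equivalence because whiskering by an equivalence of abelian categories is an equivalence on functor categories that is visibly compatible with module structures and module natural transformations. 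Under this equivalence, $\Nak_{\mathcal{M}} = \Nak_{\mathcal{M}} \circ \id_{\mathcal{M}}$ is the image of $\id_{\mathcal{M}} \in \Rex_{\mathcal{C}}(\mathcal{M}, \mathcal{M})$. Since an equivalence of abelian categories carries simple objects to simple objects, it suffices to prove that $\id_{\mathcal{M}}$ is a simple object of $\Rex_{\mathcal{C}}(\mathcal{M}, \mathcal{M})$.

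Finally I would invoke the theorem of Etingof--Ostrik \cite{MR2119143}: for an indecomposable exact right $\mathcal{C}$-module category $\mathcal{M}$, the category $\Rex_{\mathcal{C}}(\mathcal{M}, \mathcal{M})$ of $\bfk$-linear right exact $\mathcal{C}$-module endofunctors of $\mathcal{M}$ (all module endofunctors are automatically exact in the exact case) is a finite tensor category under composition, with monoidal unit $\id_{\mathcal{M}}$. By the very definition of a finite tensor category, its unit object is simple; hence $\id_{\mathcal{M}}$ is simple in $\Rex_{\mathcal{C}}(\mathcal{M}, \mathcal{M})$, and therefore $\Nak_{\mathcal{M}}$ is simple in $\Rex_{\mathcal{C}}(\mathcal{M}, \mathcal{M}_{\DLD})$, as claimed. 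The main point requiring care is the first step --- namely that $\Nak_{\mathcal{M}}$ is genuinely an equivalence of $\mathcal{C}$-module categories onto $\mathcal{M}_{\DLD}$ (so that, in particular, its quasi-inverse is again a right exact module functor and the post-composition equivalence restricts to the subcategories $\Rex_{\mathcal{C}}$); once this is granted, the remainder is formal.
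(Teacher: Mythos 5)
Your argument is correct and is essentially the paper's own proof: use that exactness forces projectives to coincide with injectives so that $\Nak_{\mathcal{M}}$ is an equivalence, transport along the post-composition equivalence $\Rex_{\mathcal{C}}(\mathcal{M},\mathcal{M}) \to \Rex_{\mathcal{C}}(\mathcal{M},\mathcal{M}_{\DLD})$, $F \mapsto \Nak_{\mathcal{M}} \circ F$, and conclude from the fact that the dual category $\Rex_{\mathcal{C}}(\mathcal{M},\mathcal{M})$ is a finite tensor category whose unit $\id_{\mathcal{M}}$ is simple. Your extra care about the module-functor structure on the quasi-inverse is a harmless elaboration of the same step.
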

\begin{proof}
  Since $\mathcal{M}$ is assumed be exact, any projective object of $\mathcal{M}$ is injective and vice versa \cite{MR2119143}, and therefore the Nakayama functor $\Nak_{\mathcal{M}}$ is an equivalence \cite[Proposition 3.24]{MR4042867}. Hence the functor
  \begin{equation*}
    \Rex_{\mathcal{C}}(\mathcal{M}, \mathcal{M}) \to \Rex_{\mathcal{C}}(\mathcal{M}, \mathcal{M}_{\DLD}),
    \quad F \mapsto \Nak_{\mathcal{M}} \circ F
  \end{equation*}
  is an equivalence.
  The category $\mathcal{E} := \Rex_{\mathcal{C}}(\mathcal{M}, \mathcal{M})$ is known as the dual of $\mathcal{C}$ with respect to $\mathcal{M}$ \cite{MR3242743}.
  By the exactness and the indecomposability of $\mathcal{M}$, $\mathcal{E}$ is in fact a finite tensor category. The unit object of $\mathcal{E}$, namely $\id_{\mathcal{M}}$, is a simple object of $\mathcal{E}$. Thus so is the corresponding object $\Nak_{\mathcal{M}}$.
\end{proof}

As an application of our results, we prove:

\begin{theorem}
  \label{thm:tw-tr-indec-exact-mod-cat}
  Let $\mathcal{C}$ be a finite tensor category, let $\mathcal{M}$ be an indecomposable exact right $\mathcal{C}$-module category, and let $\Sigma : \mathcal{M} \to \mathcal{M}_{\DLD}$ be a $\bfk$-linear right exact right $\mathcal{C}$-module functor. Then there exists a non-degenerate module-compatible $\Sigma$-twisted trace on $\Proj(\mathcal{M})$ if and only if $\Sigma \cong \Nak_{\mathcal{M}}$ as right $\mathcal{C}$-module functors. If this is the case, a non-zero module-compatible $\Sigma$-twisted trace on $\Proj(\mathcal{M})$ is unique up to scalar multiple and every such trace is non-degenerate.
\end{theorem}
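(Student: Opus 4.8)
The plan is to assemble the theorem from three ingredients already available: Theorem~\ref{thm:tw-tr-module-compati}, the refinement of the bijections $\Phi_1,\Phi_2,\Phi_3$ to non-degenerate traces established at the end of Section~\ref{sec:cat-theo-refo}, and the lemma just proved, that $\Nak_{\mathcal{M}}$ is simple in $\Rex_{\mathcal{C}}(\mathcal{M},\mathcal{M}_{\DLD})$; the only further input is Schur's lemma over the algebraically closed field $\bfk$.

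First I would record the following: a $\Sigma$-twisted trace $\trace_\bullet$ on $\Proj(\mathcal{M})$ is simultaneously non-degenerate and module-compatible if and only if the natural transformation $\xi:\Sigma\to\Nak_{\mathcal{M}}$ with $\Phi_1(\xi)=\trace_\bullet$ is simultaneously a natural isomorphism (by the final theorem of Section~\ref{sec:cat-theo-refo}) and a morphism of right $\mathcal{C}$-module functors (by Theorem~\ref{thm:tw-tr-module-compati}). Since the inverse of a natural isomorphism that happens to be a morphism of module functors is again a morphism of module functors, the second condition together with the first says exactly that $\xi$ is an isomorphism in $\Rex_{\mathcal{C}}(\mathcal{M},\mathcal{M}_{\DLD})$. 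Hence a non-degenerate module-compatible $\Sigma$-twisted trace on $\Proj(\mathcal{M})$ exists if and only if $\Sigma\cong\Nak_{\mathcal{M}}$ as right $\mathcal{C}$-module functors, which is the first assertion. (For the `if' direction one may alternatively bypass the two refinements and simply transport the canonical Nakayama-twisted trace $\nakac{\trace}_\bullet$ --- which is non-degenerate by the lemma of Subsection~\ref{subsec:cano-Naka-tw-tr} and module-compatible by Lemma~\ref{lem:Naka-tw-mod-lem-2} --- along a chosen isomorphism $\Sigma\cong\Nak_{\mathcal{M}}$, using that $\Phi_1$ preserves both properties.)

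For the uniqueness and the automatic non-degeneracy, I would argue as follows under the hypothesis $\Sigma\cong\Nak_{\mathcal{M}}$. By Theorem~\ref{thm:tw-tr-module-compati} the $\bfk$-linear isomorphism $\Phi_1$ restricts to a $\bfk$-linear isomorphism from $\Hom_{\Rex_{\mathcal{C}}(\mathcal{M},\mathcal{M}_{\DLD})}(\Sigma,\Nak_{\mathcal{M}})$ onto the space of module-compatible $\Sigma$-twisted traces on $\Proj(\mathcal{M})$; in particular it carries $0$ to $0$. Now $\Rex_{\mathcal{C}}(\mathcal{M},\mathcal{M}_{\DLD})$ is a finite abelian category --- it is equivalent, via post-composition with the equivalence $\Nak_{\mathcal{M}}$, to the dual finite tensor category $\Rex_{\mathcal{C}}(\mathcal{M},\mathcal{M})$ --- and by the lemma preceding this theorem $\Nak_{\mathcal{M}}$, hence also $\Sigma$, is a simple object of it. Since $\bfk$ is algebraically closed, Schur's lemma yields $\Hom_{\Rex_{\mathcal{C}}(\mathcal{M},\mathcal{M}_{\DLD})}(\Sigma,\Nak_{\mathcal{M}})\cong\bfk$, and every non-zero morphism $\Sigma\to\Nak_{\mathcal{M}}$ in this category is an isomorphism. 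Transporting through $\Phi_1$, the space of module-compatible $\Sigma$-twisted traces on $\Proj(\mathcal{M})$ is one-dimensional --- so a non-zero such trace is unique up to scalar multiple --- and every non-zero one corresponds to an isomorphism $\Sigma\to\Nak_{\mathcal{M}}$, hence is non-degenerate by the final theorem of Section~\ref{sec:cat-theo-refo}.

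Most of the substantive work has already been carried out in Sections~\ref{sec:cat-theo-refo} and~\ref{sec:compati-module}, so I do not expect a serious obstacle; the one point demanding care is the identification of $\Rex_{\mathcal{C}}(\mathcal{M},\mathcal{M}_{\DLD})$ as a finite abelian category in which $\Nak_{\mathcal{M}}$ is simple --- this is precisely what the exactness and indecomposability of $\mathcal{M}$ provide through the preceding lemma, and it is what licenses the appeal to Schur's lemma.
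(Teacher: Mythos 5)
Your proposal is correct and follows essentially the same route as the paper: the existence criterion is exactly the combination of the non-degeneracy correspondence (Theorem~\ref{thm:tw-tr-A-mod-non-deg} via the bijections of Section~\ref{sec:cat-theo-refo}) with Theorem~\ref{thm:tw-tr-module-compati}, and the uniqueness and automatic non-degeneracy follow, as in the paper, from Schur's lemma applied to the simple object $\Nak_{\mathcal{M}}$ of $\Rex_{\mathcal{C}}(\mathcal{M},\mathcal{M}_{\DLD})$. The extra details you supply (linearity of $\Phi_1$, that the inverse of an isomorphism of module functors is again one) are exactly the steps the paper leaves implicit.
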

\begin{proof}
  The claim about the existence follows immediately from Theorems~\ref{thm:tw-tr-A-mod-non-deg} and~\ref{thm:tw-tr-module-compati}. Now we suppose that there is an isomorphism $\xi : \Sigma \to \Nak_{\mathcal{M}}$ of right $\mathcal{C}$-module functors. Then we have
  $\Hom_{\mathcal{E}}(\Sigma, \Nak_{\mathcal{M}})
  \cong \Hom_{\mathcal{E}}(\Nak_{\mathcal{M}}, \Nak_{\mathcal{M}})
  \cong \bfk$,
  where $\mathcal{E} := \Rex_{\mathcal{C}}(\mathcal{M},\mathcal{M}_{\DLD})$ and the second isomorphism follows from Schur's lemma. This implies that every non-zero module-compatible $\Sigma$-twisted trace on $\Proj(\mathcal{M})$ is a scalar multiple of the $\Sigma$-twisted trace associated to the isomorphism $\xi$. The proof is done.
\end{proof}

Now we consider the case where $\mathcal{M} = \mathcal{C}$.

\begin{definition}
  \label{def:modular-object}
  For a finite tensor category $\mathcal{C}$, we set $\alpha_{\mathcal{C}} := \Nak_{\mathcal{C}}(\unitobj)$.
  We say that $\mathcal{C}$ is {\em unimodular} if the object $\alpha_{\mathcal{C}}$ is isomorphic to the unit object.
\end{definition}

The distinguished invertible object of $\mathcal{C}$ is introduced in \cite{MR2097289} as an analogue of the modular function on a finite-dimensional Hopf algebra (also called the distinguished grouplike element). By the discussion of \cite[Subsection 4.3]{MR4042867}, the object $\alpha_{\mathcal{C}}$ is isomorphic to the dual of the distinguished invertible object of \cite{MR2097289}.
Thus the above definition of unimodularity agrees with that of \cite{MR2097289}.

\begin{theorem}
  \label{thm:FTC-tw-tr-1}
  We fix an object $\sigma \in \mathcal{C}$,
  define $\Sigma : \mathcal{C} \to \mathcal{C}$ by $\Sigma(X) = \sigma \otimes X^{**}$ for $X \in \mathcal{C}$,
  and make $\Sigma$ an object of the category $\Rex_{\mathcal{C}}(\mathcal{C}, \mathcal{C}_{\DLD})$ in an obvious way.
  Then the space of module-compatible $\Sigma$-twisted traces on $\Proj(\mathcal{C})$ can be identified with the space $\Hom_{\mathcal{C}}(\sigma, \alpha_{\mathcal{C}})$.
  Furthermore, a module-compatible $\Sigma$-twisted traces on $\Proj(\mathcal{C})$ is non-degenerate if and only if the corresponding morphism $\sigma \to \alpha_{\mathcal{C}}$ in $\mathcal{C}$ is an isomorphism.
\end{theorem}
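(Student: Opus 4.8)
The plan is to unwind the general machinery of Sections~\ref{sec:cat-theo-refo} and~\ref{sec:compati-module} in the special case $\mathcal{M} = \mathcal{C}$, using the explicit description $\Nak_{\mathcal{C}}(X) = \alpha_{\mathcal{C}} \otimes X^{**}$ recalled from \cite{MR4042867}. By Theorem~\ref{thm:tw-tr-module-compati}, the space of module-compatible $\Sigma$-twisted traces on $\Proj(\mathcal{C})$ is in bijection with the space of morphisms $\xi : \Sigma \to \Nak_{\mathcal{C}}$ in the category $\Rex_{\mathcal{C}}(\mathcal{C}, \mathcal{C}_{\DLD})$; so the whole statement reduces to computing this Hom-space and identifying it with $\Hom_{\mathcal{C}}(\sigma, \alpha_{\mathcal{C}})$ in a way that is compatible with the notion of isomorphism. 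First I would observe that both $\Sigma$ and $\Nak_{\mathcal{C}}$ factor through the double-dual functor: $\Sigma = (\sigma \otimes -) \circ \DLD$ and $\Nak_{\mathcal{C}} = (\alpha_{\mathcal{C}} \otimes -) \circ \DLD$, where $\DLD : \mathcal{C} \to \mathcal{C}_{\DLD}$ is the double-dual functor viewed as a right $\mathcal{C}$-module equivalence (this is precisely the content of $\mathcal{M}_{\DLD}$), and $\sigma \otimes -$, $\alpha_{\mathcal{C}} \otimes -$ are right $\mathcal{C}$-module endofunctors of $\mathcal{C}$ via the associativity constraint.

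The key step is then to identify $\Rex_{\mathcal{C}}(\mathcal{C}, \mathcal{C})$ — the category of $\bfk$-linear right exact right $\mathcal{C}$-module endofunctors of $\mathcal{C}$ — with $\mathcal{C}$ itself via evaluation at the unit object, $F \mapsto F(\unitobj)$, with quasi-inverse $Y \mapsto (Y \otimes -)$. This is the standard fact that $\mathcal{C}$ acting on itself has module endofunctor category equivalent to $\mathcal{C}$; under it, a morphism of right $\mathcal{C}$-module functors $Y \otimes - \to Z \otimes -$ corresponds precisely to a morphism $Y \to Z$ in $\mathcal{C}$ obtained by evaluation at $\unitobj$. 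Composing with precomposition by the equivalence $\DLD$ (which induces an equivalence $\Rex_{\mathcal{C}}(\mathcal{C}, \mathcal{C}) \xrightarrow{\sim} \Rex_{\mathcal{C}}(\mathcal{C}, \mathcal{C}_{\DLD})$), we obtain
\begin{equation*}
  \Hom_{\Rex_{\mathcal{C}}(\mathcal{C}, \mathcal{C}_{\DLD})}(\Sigma, \Nak_{\mathcal{C}})
  \cong \Hom_{\Rex_{\mathcal{C}}(\mathcal{C}, \mathcal{C})}(\sigma \otimes -, \alpha_{\mathcal{C}} \otimes -)
  \cong \Hom_{\mathcal{C}}(\sigma, \alpha_{\mathcal{C}}),
\end{equation*}
where the last isomorphism is evaluation at $\unitobj$ (using $\sigma \otimes \unitobj \cong \sigma$ and $\alpha_{\mathcal{C}} \otimes \unitobj \cong \alpha_{\mathcal{C}}$). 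Since each of these equivalences of categories sends isomorphisms to isomorphisms and reflects them, $\xi : \Sigma \to \Nak_{\mathcal{C}}$ is an isomorphism if and only if the corresponding morphism $\sigma \to \alpha_{\mathcal{C}}$ is. Combining this with the equivalence between non-degenerate twisted traces and natural isomorphisms $\Sigma \to \Nak_{\mathcal{C}}$ from the main theorem of Section~\ref{sec:cat-theo-refo} (whose module-compatible version is Theorem~\ref{thm:tw-tr-module-compati}) yields the non-degeneracy statement.

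I expect the main obstacle to be bookkeeping rather than conceptual: one must be careful that the module-functor structures on $\Sigma$ and $\Nak_{\mathcal{C}}$ that are being compared are exactly the ones coming from "(object) $\otimes \DLD(-)$", i.e., that the given structure of $\Sigma$ as an object of $\Rex_{\mathcal{C}}(\mathcal{C}, \mathcal{C}_{\DLD})$ "in an obvious way" coincides with the one built from the associativity constraint, and likewise that the twisted module structure $\Psi^{\Nak}$ on $\Nak_{\mathcal{C}}$ from Subsection~\ref{subsec:Naka-tw-module} agrees under the isomorphism $\Nak_{\mathcal{C}} \cong \alpha_{\mathcal{C}} \otimes (-)^{**}$ with the standard one — this is part of \cite[Subsection 4.3]{MR4042867} and can be cited. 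Once those identifications are in place, the argument is a clean chain of equivalences, and Schur's lemma is not even needed here (it enters only in the indecomposable-module-category statements of Theorem~\ref{thm:tw-tr-indec-exact-mod-cat}).
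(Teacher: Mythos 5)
Your proposal is correct and follows essentially the same route as the paper: reduce via Theorem~\ref{thm:tw-tr-module-compati} to computing $\Hom$ in $\Rex_{\mathcal{C}}(\mathcal{C},\mathcal{C}_{\DLD})$, identify that category with $\mathcal{C}$ by evaluation at $\unitobj$ (the paper does this in one step, you factor it through $\Rex_{\mathcal{C}}(\mathcal{C},\mathcal{C})$ and precomposition with $\DLD$, which is only a cosmetic difference), and transport invertibility across the equivalence for the non-degeneracy claim.
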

\begin{proof}
  We write $\mathcal{E} := \Rex_{\mathcal{C}}(\mathcal{C}, \mathcal{C}_{\DLD})$ for simplicity. By Theorem~\ref{thm:tw-tr-module-compati}, the space of  module-compatible $\Sigma$-twisted traces on $\Proj(\mathcal{C})$ is identified with the space $\Hom_{\mathcal{E}}(\Sigma, \Nak_{\mathcal{C}})$.
  Since the functor $\mathcal{E} \to \mathcal{C}$ defined by $F \mapsto F(\unitobj)$ is an equivalence, the space is isomorphic to $\Hom_{\mathcal{C}}(\sigma, \alpha_{\mathcal{C}})$.
  Since an element of $\Hom_{\mathcal{E}}(\Sigma, \Nak_{\mathcal{C}})$ is invertible if and only if the corresponding element of $\Hom_{\mathcal{C}}(\sigma, \alpha_{\mathcal{C}})$ is invertible, the proof is now completed by  Theorem~\ref{thm:tw-tr-module-compati}.
\end{proof}

Corollary \ref{cor:FTC-tw-tr-1} below is just the case where $\sigma = \alpha_{\mathcal{C}}$ in the above theorem. We note that $\alpha_{\mathcal{C}}$ is isomorphic to the dual of the socle of the projective cover of $\unitobj$ \cite[Theorem 6.1]{MR2097289}.
The following corollary is closely related to \cite[Corollary 5.6]{2018arXiv180900499G}.

\begin{corollary}
  \label{cor:FTC-tw-tr-1}
  We define $\Sigma : \mathcal{C} \to \mathcal{C}$ by $\Sigma(X) = \alpha_{\mathcal{C}} \otimes X^{**}$ for $X \in \mathcal{C}$ and make $\Sigma$ an object of the category $\Rex_{\mathcal{C}}(\mathcal{C}, \mathcal{C}_{\DLD})$ in an obvious way.
  Then a non-zero module-compatible $\Sigma$-twisted trace on $\Proj(\mathcal{C})$ exists and is unique up to scalar multiple.
  Furthermore, every such trace is non-degenerate.
\end{corollary}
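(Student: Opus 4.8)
The plan is to obtain this as the special case $\sigma = \alpha_{\mathcal{C}}$ of Theorem~\ref{thm:FTC-tw-tr-1}, combined with the fact that $\alpha_{\mathcal{C}}$ is an invertible object.

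First I would recall that $\alpha_{\mathcal{C}} = \Nak_{\mathcal{C}}(\unitobj)$ is isomorphic to the dual of the distinguished invertible object of \cite{MR2097289} and is therefore an invertible---hence simple---object of $\mathcal{C}$. Since the base field $\bfk$ is algebraically closed, Schur's lemma gives $\Hom_{\mathcal{C}}(\alpha_{\mathcal{C}}, \alpha_{\mathcal{C}}) \cong \bfk$, so this Hom space is a line.

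Next, Theorem~\ref{thm:FTC-tw-tr-1} applied with $\sigma = \alpha_{\mathcal{C}}$ identifies the space of module-compatible $\Sigma$-twisted traces on $\Proj(\mathcal{C})$ with $\Hom_{\mathcal{C}}(\alpha_{\mathcal{C}}, \alpha_{\mathcal{C}})$, which is one-dimensional by the previous step; this yields the existence of a non-zero such trace and its uniqueness up to scalar multiple. For the non-degeneracy assertion I would observe that every non-zero element of the line $\Hom_{\mathcal{C}}(\alpha_{\mathcal{C}}, \alpha_{\mathcal{C}}) = \bfk\,\id_{\alpha_{\mathcal{C}}}$ is an isomorphism, so by the final clause of Theorem~\ref{thm:FTC-tw-tr-1} the corresponding twisted trace is non-degenerate.

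There is essentially no serious obstacle here: the only point requiring care is the invertibility (hence simplicity) of $\alpha_{\mathcal{C}}$, which is precisely what forces the relevant Hom space to be a line. Alternatively, one may note that the given functor $\Sigma$ coincides with $\Nak_{\mathcal{C}}$ as an object of $\Rex_{\mathcal{C}}(\mathcal{C}, \mathcal{C}_{\DLD})$, and then invoke Theorem~\ref{thm:tw-tr-indec-exact-mod-cat} directly with $\mathcal{M} = \mathcal{C}$, which is an indecomposable exact right $\mathcal{C}$-module category.
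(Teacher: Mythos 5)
Your proposal is correct and matches the paper's own argument: the paper proves this corollary precisely as the special case $\sigma = \alpha_{\mathcal{C}}$ of Theorem~\ref{thm:FTC-tw-tr-1}, with existence, uniqueness and non-degeneracy coming from the fact that $\Hom_{\mathcal{C}}(\alpha_{\mathcal{C}}, \alpha_{\mathcal{C}})$ is a line whose non-zero elements are isomorphisms. Your alternative route via $\Sigma \cong \Nak_{\mathcal{C}}$ and Theorem~\ref{thm:tw-tr-indec-exact-mod-cat} is also valid but is not needed.
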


The following corollary is also interesting:

\begin{corollary}
  \label{cor:FTC-tw-tr-2}
  For a finite tensor category $\mathcal{C}$, there is a natural isomorphism
  \begin{equation*}
    \Hom_{\mathcal{C}}(P, M)^*
    \cong \Hom_{\mathcal{C}}(M, \alpha_{\mathcal{C}} \otimes P^{**})
    \quad (M \in \mathcal{C}, P \in \Proj(\mathcal{C})).
  \end{equation*}
\end{corollary}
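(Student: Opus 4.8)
The plan is to read the desired isomorphism directly off Corollary~\ref{cor:FTC-tw-tr-1} by means of the bijection $\Phi_2$ of Subsection~\ref{subsec:cat-theo-reform}. First I would put $\Sigma := \alpha_{\mathcal{C}} \otimes (-)^{**} : \mathcal{C} \to \mathcal{C}$, regarded as an object of $\Rex_{\mathcal{C}}(\mathcal{C}, \mathcal{C}_{\DLD})$ in the obvious way as in Corollary~\ref{cor:FTC-tw-tr-1}. That corollary provides a non-zero module-compatible $\Sigma$-twisted trace $\trace_{\bullet}$ on $\Proj(\mathcal{C})$ and asserts that every such trace is non-degenerate; in particular $\trace_{\bullet}$ itself is non-degenerate.

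Next I would invoke the main theorem of Subsection~\ref{subsec:cat-theo-reform}, according to which the bijection $\Phi_2 : \TwTr(\Sigma) \to \Nat(\mathbb{h}_{\Sigma}, \mathbb{h}^*)$ restricts to a bijection between the set of non-degenerate $\Sigma$-twisted traces on $\Proj(\mathcal{C})$ and the set of natural isomorphisms $\mathbb{h}_{\Sigma} \to \mathbb{h}^*$. Applying $\Phi_2$ to $\trace_{\bullet}$ therefore yields a natural isomorphism $\Phi_2(\trace_{\bullet}) : \mathbb{h}_{\Sigma} \xrightarrow{\ \cong\ } \mathbb{h}^*$ of $\bfk$-linear functors $\mathcal{C}^{\op} \times \Proj(\mathcal{C}) \to \Vect$.

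Finally I would unwind the definitions $\mathbb{h}^*(M, P) = \Hom_{\mathcal{C}}(P, M)^*$ and $\mathbb{h}_{\Sigma}(M, P) = \Hom_{\mathcal{C}}(M, \Sigma(P)) = \Hom_{\mathcal{C}}(M, \alpha_{\mathcal{C}} \otimes P^{**})$, which turns $\Phi_2(\trace_{\bullet})$ into precisely the asserted isomorphism, natural in $M \in \mathcal{C}$ and in $P \in \Proj(\mathcal{C})$. I expect no real obstacle in this argument; the only point worth a second glance is that Corollary~\ref{cor:FTC-tw-tr-1} is already phrased for an arbitrary finite tensor category $\mathcal{C}$, so nothing extra needs to be verified about $\mathcal{C}$ as a module category over itself, and the entire content of the present statement is just the existence half of that corollary translated through $\Phi_2$.
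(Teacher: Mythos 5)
Your proposal is correct and is essentially the paper's intended argument: the corollary is stated without proof precisely because it is the image under $\Phi_2$ of the non-degenerate module-compatible $\Sigma$-twisted trace supplied by Corollary~\ref{cor:FTC-tw-tr-1} (equivalently, the isomorphism $\nakac{\alpha}_{M,P}$ combined with $\Nak_{\mathcal{C}} \cong \alpha_{\mathcal{C}} \otimes (-)^{**}$). Unwinding $\mathbb{h}_{\Sigma}$ and $\mathbb{h}^*$ as you do gives exactly the stated natural isomorphism, so there is nothing to add.
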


\subsection{Pivotal case}
\label{subsec:FTC-pivotal-case}

\newcommand{\MT}{\mathcal{T}}
\newcommand{\tracemigi}{\MT^{(r)}}
\newcommand{\tracehidari}{\MT^{(\ell)}}
\newcommand{\traceryogawa}{\MT^{(\ell r)}}

Let $\mathcal{C}$ be a pivotal finite tensor category with pivotal structure $\mathfrak{p}$.
As before, we denote by $\DLD$ the double left dual functor. The double right dual functor will be denoted by $\DRD$.
The identity functor on $\mathcal{C}$ is made into a $\DLD$-twisted right $\mathcal{C}$-module functor, as well as a $\DRD$-twisted left $\mathcal{C}$-module functor, through the pivotal structure $\mathfrak{p}$ (see Remarks~\ref{rem:pivotal-case} and~\ref{rem:left-module-case}).

\begin{definition}
  A {\em left} ({\it resp}.~{\em right}) {\em modified trace} on $\Proj(\mathcal{C})$ is an $\id_{\mathcal{C}}$-twisted trace on $\Proj(\mathcal{C})$ compatible with the left ({\it resp}.~right) $\mathcal{C}$-module structure.  
  We denote by
  $\tracehidari_{\mathcal{C}}$
  and $\tracemigi_{\mathcal{C}}$
  the spaces of left and right modified traces on $\Proj(\mathcal{C})$, respectively.
\end{definition}

The above definition agrees with those considered in \cite{MR2998839,MR2803849,MR3068949,2018arXiv180100321B}.
Now we give some applications of our results to modified traces.
We first establish the following criterion for the existence of non-zero modified traces:

\begin{theorem}
  \label{thm:FTC-mod-tr-1}
  For a pivotal finite tensor category $\mathcal{C}$, the following assertions are equivalent:
  \begin{itemize}
  \item [(1)] $\mathcal{C}$ is unimodular, that is, there is an isomorphism $\alpha_{\mathcal{C}} \cong \unitobj$ in $\mathcal{C}$.
  \item [(2)] There is a non-zero right modified trace on $\Proj(\mathcal{C})$.
  \item [(3)] There is a non-zero left modified trace on $\Proj(\mathcal{C})$.
  \end{itemize}
  If these equivalent conditions are satisfied, then we have
  $\dim_{\bfk} \tracemigi_{\mathcal{C}} = \dim_{\bfk} \tracehidari_{\mathcal{C}} = 1$
  and every non-zero left or right modified trace on $\Proj(\mathcal{C})$ is non-degenerate.
\end{theorem}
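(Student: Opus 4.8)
The plan is to deduce all three equivalences, together with the dimension and non-degeneracy statements, from Theorem~\ref{thm:FTC-tw-tr-1} applied with $\sigma = \unitobj$, combined with the facts that $\alpha_{\mathcal{C}}$ is an invertible (hence simple) object and that $\bfk$ is algebraically closed. First I would treat the right-handed assertions, i.e.\ the equivalence of (1) and (2) together with the claims about $\tracemigi_{\mathcal{C}}$. Through the pivotal structure $\mathfrak{p}$ the identity functor becomes an object of $\Rex_{\mathcal{C}}(\mathcal{C}, \mathcal{C}_{\DLD})$ (Remark~\ref{rem:pivotal-case}), and the condition that $\mathfrak{p}$ be a monoidal natural isomorphism says precisely that $\mathfrak{p}_{X} : X \to X^{**}$ is an isomorphism in $\Rex_{\mathcal{C}}(\mathcal{C}, \mathcal{C}_{\DLD})$ from $\id_{\mathcal{C}}$ to the functor $\Sigma$ of Theorem~\ref{thm:FTC-tw-tr-1} with $\sigma = \unitobj$ (so that $\Sigma(X) = \unitobj \otimes X^{**}$). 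Hence, by that theorem, $\tracemigi_{\mathcal{C}}$ is linearly isomorphic to $\Hom_{\mathcal{C}}(\unitobj, \alpha_{\mathcal{C}})$, and a non-zero right modified trace is non-degenerate if and only if the corresponding morphism $\unitobj \to \alpha_{\mathcal{C}}$ is invertible.

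Next I would use that $\alpha_{\mathcal{C}}$ is the dual of the distinguished invertible object (the discussion following Definition~\ref{def:modular-object}, and \cite{MR2097289,MR4042867}), hence an invertible and therefore simple object of $\mathcal{C}$. Since $\unitobj$ is simple and $\bfk$ is algebraically closed, Schur's lemma gives that $\Hom_{\mathcal{C}}(\unitobj, \alpha_{\mathcal{C}})$ is one-dimensional when $\alpha_{\mathcal{C}} \cong \unitobj$ and zero otherwise, and in the former case every non-zero element of it is an isomorphism. Together with the previous paragraph this yields (1)$\iff$(2), the equality $\dim_{\bfk}\tracemigi_{\mathcal{C}} = 1$ under (1), and non-degeneracy of every non-zero right modified trace.

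For the left-handed assertions I would pass to the reversed category $\mathcal{C}^{\rev}$: a pivotal structure on $\mathcal{C}$ induces one on $\mathcal{C}^{\rev}$, the right $\mathcal{C}^{\rev}$-module structure on $\mathcal{C}^{\rev}$ coincides with the left $\mathcal{C}$-module structure on $\mathcal{C}$, and, as recorded in Remark~\ref{rem:left-module-case}, the double left dual functor of $\mathcal{C}^{\rev}$ is $\DRD$; consequently a left modified trace on $\Proj(\mathcal{C})$ is precisely a right modified trace on $\Proj(\mathcal{C}^{\rev})$, i.e.\ $\tracehidari_{\mathcal{C}} = \tracemigi_{\mathcal{C}^{\rev}}$. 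Moreover the Nakayama functor depends only on the underlying finite abelian category, which $\mathcal{C}$ and $\mathcal{C}^{\rev}$ share, so $\Nak_{\mathcal{C}^{\rev}} = \Nak_{\mathcal{C}}$ and in particular $\alpha_{\mathcal{C}^{\rev}} = \alpha_{\mathcal{C}}$; thus $\mathcal{C}^{\rev}$ is unimodular if and only if $\mathcal{C}$ is. Applying the first two paragraphs to $\mathcal{C}^{\rev}$ then yields (1)$\iff$(3), the equality $\dim_{\bfk}\tracehidari_{\mathcal{C}} = 1$ under (1), and non-degeneracy of every non-zero left modified trace, which together with the above completes the proof.

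I expect the main difficulty to be bookkeeping rather than new mathematics: one must verify carefully that through $\mathfrak{p}$ the functor $\id_{\mathcal{C}}$ is genuinely the $\sigma = \unitobj$ instance of Theorem~\ref{thm:FTC-tw-tr-1}, and that the reduction $\tracehidari_{\mathcal{C}} = \tracemigi_{\mathcal{C}^{\rev}}$ is compatible with the precise definitions of left and right module-compatibility and with the interplay of the two double-dual functors $\DLD$ and $\DRD$. The substantive input is exhausted by Theorem~\ref{thm:FTC-tw-tr-1}, the identification of $\alpha_{\mathcal{C}}$ with the dual of the distinguished invertible object, and the simplicity of invertible objects in a finite tensor category.
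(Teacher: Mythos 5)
Your proposal is correct and takes essentially the same route as the paper: both reduce the statement to the identification of module-compatible $\id_{\mathcal{C}}$-twisted traces with $\Hom_{\mathcal{C}}(\unitobj,\alpha_{\mathcal{C}})$ (equivalently, morphisms of twisted module functors into $\Nak_{\mathcal{C}}$), apply Schur's lemma using the simplicity/invertibility of $\alpha_{\mathcal{C}}$, and handle the left-handed case by passing to $\mathcal{C}^{\rev}$. The only cosmetic difference is that the paper argues via Theorem~\ref{thm:tw-tr-indec-exact-mod-cat}, constructing the isomorphism $\xi_X=\Psi^{\Nak}_{\unitobj,X}\circ(f\otimes\mathfrak{p}_X)$ explicitly, whereas you invoke Theorem~\ref{thm:FTC-tw-tr-1} with $\sigma=\unitobj$ after observing that monoidality of $\mathfrak{p}$ makes it an isomorphism $\id_{\mathcal{C}}\to\DLD$ of twisted right $\mathcal{C}$-module functors.
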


This theorem has been proved in \cite[Corollary 5.6]{2018arXiv180900499G} except the implications (2) $\Rightarrow$ (1) and (3) $\Rightarrow$ (1). We give a proof of the whole part of this theorem based on our approach, which is different to \cite{2018arXiv180900499G}.

\begin{proof}
  \underline{(1) $\Rightarrow$ (2)}.
  Suppose that $\mathcal{C}$ is unimodular.
  We fix an isomorphism $f : \unitobj \to \alpha_{\mathcal{C}}$ in $\mathcal{C}$ and then define the natural isomorphism $\xi$ by
  \begin{equation}
    \xi_X = \Big( X \xrightarrow{\quad f \otimes \mathfrak{p}_{X} \quad}
    \alpha_{\mathcal{C}} \otimes X^{**}
    \xrightarrow{\quad \Psi^{\Nak}_{\unitobj,X} \quad}
    \Nak_{\mathcal{C}}(X) \Big)
  \end{equation}
  for $X \in \mathcal{C}$. It is easy to see that $\xi = \{ \xi_X \}$ is in fact an isomorphism of $\DLD$-twisted right $\mathcal{C}$-module functors. Thus, by Theorem~\ref{thm:tw-tr-indec-exact-mod-cat}, there is a non-zero right modified trace on $\Proj(\mathcal{C})$. The uniqueness (up to scalar) and non-degeneracy of non-zero right modified trace on $\Proj(\mathcal{C})$ also follow from that theorem.

  \smallskip
  \underline{(2) $\Rightarrow$ (1)}.
  Suppose that there is a non-zero right modified trace on $\Proj(\mathcal{C})$. Then, again by  Theorem~\ref{thm:tw-tr-indec-exact-mod-cat}, the right $\DLD$-twisted $\mathcal{C}$-module functor $\id_{\mathcal{C}}$ is isomorphic to $\Nak_{\mathcal{C}}$. In particular, we have
  $\alpha_{\mathcal{C}} = \Nak_{\mathcal{C}}(\unitobj) \cong \id_{\mathcal{C}}(\unitobj) = \unitobj$
  and thus $\mathcal{C}$ is unimodular.

  \smallskip
  \underline{(1) $\Leftrightarrow$ (3)}. Apply the proof of (1) $\Leftrightarrow$ (2) to $\mathcal{C}^{\rev}$.
\end{proof}

\subsection{Existence of two-sided modified traces}

Let $\mathcal{C}$ be a pivotal finite tensor category.
A {\em two-sided modified trace} on $\Proj(\mathcal{C})$ is an element of $\traceryogawa_{\mathcal{C}} := \tracehidari_{\mathcal{C}} \cap \tracemigi_{\mathcal{C}}$. Theorem~\ref{thm:FTC-mod-tr-1} implies that a non-zero two-side modified trace on $\Proj(\mathcal{C})$ exists only if $\mathcal{C}$ is unimodular. The converse does not hold in general. For the existence of such a trace, we shall impose an extra condition for the pivotal structure.

\begin{definition}
  Let $\mathcal{C}$ be a finite tensor category. We denote by
  \begin{equation*}
    \Phi^{\Nak}_{X,Y} : {}^{**}X \otimes \Nak(Y) \to \Nak(X \otimes Y)
    \quad \text{and} \quad
    \Psi^{\Nak}_{X,Y} : \Nak(X) \otimes Y^{**} \to \Nak(X \otimes Y)
  \end{equation*}
  the left and the right twisted module structures of the Nakayama functor $\Nak = \Nak_{\mathcal{C}}$, respectively. The {\em Radford isomorphism} ({\it cf}. \cite{MR2097289} and \cite[Corollary 4.12]{MR4042867}) for $\mathcal{C}$ is defined to be the composition
  \begin{equation*}
    g_X := \Big(
    {}^{**}X \otimes \alpha_{\mathcal{C}}
    \xrightarrow{\quad \Phi^{\Nak}_{X,\unitobj} \quad}
    \Nak(X)
    \xrightarrow{\quad (\Psi^{\Nak}_{\unitobj,X})^{-1} \quad}
    \alpha_{\mathcal{C}} \otimes X^{**} \Big)
  \end{equation*}
  for $X \in \mathcal{C}$. Now we suppose that $\mathcal{C}$ is a pivotal finite tensor category with pivotal structure $\mathfrak{p}$. We say that $\mathcal{C}$ is {\em spherical} \cite[Definition 3.5.2]{2013arXiv1312.7188D} if $\mathcal{C}$ is unimodular and the diagram
  \begin{equation}
    \label{eq:def-sphericity}
    \begin{tikzcd}[column sep = 5em]
      {}^{**} X \otimes \unitobj
      \arrow[d, "\id_{{}^{**}\!X} \otimes f"']
      \arrow[r, "\mathfrak{p}_{\, {}^{**} \! X}"]
      & X \arrow[r, "\mathfrak{p}_X"]
      & \unitobj \otimes X^{**}
      \arrow[d, "f \otimes \id_{X^{**}}"] \\
      {}^{**} X \otimes \alpha_{\mathcal{C}}
      \arrow[rr, "g_X"]
      & & \alpha_{\mathcal{C}} \otimes X^{**}
    \end{tikzcd}
  \end{equation}
  commutes for all objects $X \in \mathcal{C}$, where $f : \unitobj \to \alpha_{\mathcal{C}}$ is an arbitrary isomorphism in $\mathcal{C}$ (since $\unitobj \in \mathcal{C}$ is a simple object, such an isomorphism $f$ is unique up to scalar and therefore this definition does not depend on the choice of $f$).
\end{definition}

\begin{remark}
  Let $\mathcal{C}$ be a finite tensor category, and set $D = \alpha_{\mathcal{C}}^*$ (this is the distinguished invertible object of \cite{MR2097289}).
  The {\em left exact Nakayama functor} of $\mathcal{C}$ is defined by
  \begin{equation*}
    \Nak^{\ell}_{\mathcal{C}}: \mathcal{C} \to \mathcal{C}, \quad
    V \mapsto \int_{X \in \mathcal{C}} \Hom_{\mathcal{C}}(X, V) \vectactl X
    \quad (V \in \mathcal{C})
  \end{equation*}
  \cite[Definition 3.14]{MR4042867}.
  A relationship between $\Nak^{\ell}_{\mathcal{C}}$ and a result of \cite{MR2097289} has been pointed out in \cite{2017arXiv170709691S}. Noting that $\Nak^{\ell}_{\mathcal{C}}$ is the inverse of the (right exact) Nakayama functor $\Nak_{\mathcal{C}}$ used in this paper, we see from \cite[Remark 4.11]{2017arXiv170709691S} that the natural isomorphism
  \begin{equation*}
    X^{**} \xrightarrow{\  \coev_D \otimes \id_{X^{**}} \ } 
    D \otimes D^* \otimes X^{**}
    \xrightarrow{\  \id_D \otimes g_X^{-1} \ }
    D \otimes {}^{**}\!X \otimes D^*
    \quad (X \in \mathcal{C})
  \end{equation*}
  coincides with that given in \cite[Theorem 3.3]{MR2097289}.
\end{remark}

Now we prove the following criterion for the existence of two-sided modified traces:

\begin{corollary}
  \label{cor:FTC-mod-tr-2}
  Let $\mathcal{C}$ be a pivotal finite tensor category. Then there exists a non-zero two-sided modified trace on $\Proj(\mathcal{C})$ if and only if $\mathcal{C}$ is spherical. If this is the case, a non-zero two-sided modified trace on $\Proj(\mathcal{C})$ is unique up to scalar multiple and every such trace is non-degenerate.
\end{corollary}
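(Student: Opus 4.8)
The plan is to derive this from Theorem~\ref{thm:FTC-mod-tr-1}, Theorem~\ref{thm:tw-tr-module-compati}, and the definition of the Radford isomorphism. First note that sphericity includes unimodularity by definition, so if $\mathcal{C}$ is not unimodular it is not spherical; moreover, by Theorem~\ref{thm:FTC-mod-tr-1} there is then no non-zero right modified trace on $\Proj(\mathcal{C})$, and since $\traceryogawa_{\mathcal{C}} \subseteq \tracemigi_{\mathcal{C}}$ there is no non-zero two-sided one either. Thus both sides of the claimed equivalence fail in the non-unimodular case, and from now on I assume $\mathcal{C}$ is unimodular; then Theorem~\ref{thm:FTC-mod-tr-1} gives $\dim_{\bfk}\tracemigi_{\mathcal{C}} = \dim_{\bfk}\tracehidari_{\mathcal{C}} = 1$, and every non-zero left or right modified trace on $\Proj(\mathcal{C})$ is non-degenerate.

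Next, I fix an isomorphism $f : \unitobj \to \alpha_{\mathcal{C}}$ and let $\xi : \id_{\mathcal{C}} \to \Nak_{\mathcal{C}}$ be the isomorphism of $\DLD$-twisted right $\mathcal{C}$-module functors constructed in the proof of Theorem~\ref{thm:FTC-mod-tr-1}, namely $\xi_X = \Psi^{\Nak}_{\unitobj,X} \circ (f \otimes \mathfrak{p}_X)$ for $X \in \mathcal{C}$; let $\trace^{(r)}_{\bullet} = \Phi_1(\xi)$ be the corresponding right modified trace, which spans $\tracemigi_{\mathcal{C}}$. Since $\traceryogawa_{\mathcal{C}} = \tracehidari_{\mathcal{C}} \cap \tracemigi_{\mathcal{C}}$ and $\tracemigi_{\mathcal{C}}$ is one-dimensional, we have $\traceryogawa_{\mathcal{C}} \neq 0$ if and only if $\trace^{(r)}_{\bullet}$ is itself compatible with the left $\mathcal{C}$-module structure. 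Applying Theorem~\ref{thm:tw-tr-module-compati} to $\mathcal{C}^{\rev}$ (cf.\ Remark~\ref{rem:left-module-case}), this holds if and only if $\xi$ is, in addition to being a morphism of $\DLD$-twisted right $\mathcal{C}$-module functors, a morphism of $\DRD$-twisted left $\mathcal{C}$-module functors $\id_{\mathcal{C}} \to \Nak_{\mathcal{C}}$, where $\id_{\mathcal{C}}$ carries the left $\mathcal{C}$-module structure induced by $\mathfrak{p}$ and $\Nak_{\mathcal{C}}$ its canonical left $\mathcal{C}$-module structure $\Phi^{\Nak}$. Thus the corollary reduces to the assertion that this last condition on $\xi$ is equivalent to the commutativity of the diagram~\eqref{eq:def-sphericity} for all $X \in \mathcal{C}$.

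That $\xi$ being a left $\mathcal{C}$-module morphism implies the commutativity of~\eqref{eq:def-sphericity} is the easy direction: the defining coherence square for $\xi$ at the pair $(X,\unitobj)$ reads $\xi_X \circ \mathfrak{p}_{{}^{**}X} = \Phi^{\Nak}_{X,\unitobj} \circ (\id_{{}^{**}X} \otimes \xi_{\unitobj})$; using $\xi_{\unitobj} = f$, substituting $\xi_X = \Psi^{\Nak}_{\unitobj,X} \circ (f \otimes \mathfrak{p}_X)$ and composing on the left with $(\Psi^{\Nak}_{\unitobj,X})^{-1}$, this becomes $(f \otimes \id_{X^{**}}) \circ \mathfrak{p}_X \circ \mathfrak{p}_{{}^{**}X} = g_X \circ (\id_{{}^{**}X} \otimes f)$, which is precisely the commutativity of~\eqref{eq:def-sphericity}. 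For the converse one must propagate the instance $(X,\unitobj)$ to all pairs $(X,Y)$. Here I would use that $\xi$ is already a morphism of right $\mathcal{C}$-module functors, together with the fact that both $\Nak_{\mathcal{C}}$ and $\id_{\mathcal{C}}$ are $\mathcal{C}$-bimodule functors in the twisted sense (their $\DRD$-twisted left and $\DLD$-twisted right module structures cohere; for $\Nak_{\mathcal{C}}$ this follows from its construction, cf.\ \cite{MR4042867}, and for $\id_{\mathcal{C}}$ from $\mathfrak{p}$ being monoidal), and the invertibility of the structure morphisms $\Psi^{\Nak}$: these allow one to express $\Phi^{\Nak}_{X,Y}$ through $\Phi^{\Nak}_{X,\unitobj}$ and the maps $\Psi^{\Nak}$, so that the coherence square for $\xi$ at $(X,Y)$ reduces by a diagram chase to the one at $(X,\unitobj)$. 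I expect this reduction to be the main technical obstacle; the remaining steps are bookkeeping with the bijections of Sections~\ref{sec:cat-theo-refo}--\ref{sec:compati-module}.

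Finally, when $\mathcal{C}$ is spherical the equivalence just established gives $\traceryogawa_{\mathcal{C}} \neq 0$, whence $\tracemigi_{\mathcal{C}} = \tracehidari_{\mathcal{C}} = \traceryogawa_{\mathcal{C}}$, a one-dimensional space by Theorem~\ref{thm:FTC-mod-tr-1}; this gives uniqueness up to a scalar. Non-degeneracy of any non-zero two-sided modified trace follows because it is in particular a non-zero right modified trace, and such traces are non-degenerate by Theorem~\ref{thm:FTC-mod-tr-1}.
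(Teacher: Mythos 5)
Your proposal is correct and follows essentially the same route as the paper: construct $\xi_X=\Psi^{\Nak}_{\unitobj,X}\circ(f\otimes\mathfrak{p}_X)$, get uniqueness and non-degeneracy from Theorem~\ref{thm:FTC-mod-tr-1}, and translate two-sidedness of the spanning right modified trace into $\xi$ being a morphism of $\DRD$-twisted left $\mathcal{C}$-module functors, with sphericity appearing as the $(X,\unitobj)$ instance of that coherence. The one step you leave as an expectation --- propagating the instance $(X,\unitobj)$ to all pairs $(X,Y)$ --- does go through with exactly the ingredients you name: the bimodule coherence of $\Nak_{\mathcal{C}}$ gives $\Phi^{\Nak}_{X,Y}\circ(\id_{{}^{**}X}\otimes\Psi^{\Nak}_{\unitobj,Y})=\Psi^{\Nak}_{X,Y}\circ(\Phi^{\Nak}_{X,\unitobj}\otimes\id_{Y^{**}})$, and combining this with the right-module coherence $\Psi^{\Nak}_{\unitobj,X\otimes Y}=\Psi^{\Nak}_{X,Y}\circ(\Psi^{\Nak}_{\unitobj,X}\otimes\id_{Y^{**}})$, monoidality of $\mathfrak{p}$, and the $(X,\unitobj)$ case \eqref{eq:def-sphericity} yields $\xi_{X\otimes Y}\circ(\mathfrak{p}_{{}^{**}X}\otimes\id_Y)=\Phi^{\Nak}_{X,Y}\circ(\id_{{}^{**}X}\otimes\xi_Y)$ in a few lines, so there is no genuine obstacle there. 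The paper makes this same reduction frictionless by transporting the bimodule structure of $\Nak_{\mathcal{C}}$ along $\Psi^{\Nak}_{\unitobj,-}$ to $\mathfrak{N}(X)=\alpha_{\mathcal{C}}\otimes X^{**}$, whose left structure is then $g_X\otimes\id_{Y^{**}}$ by the definition of the Radford isomorphism, after which the check for $\xi_X=f\otimes\mathfrak{p}_X$ is a two-line computation; your direct chase at the level of $\Nak_{\mathcal{C}}$ is the same argument without this change of model, and it relies on the same input (that the left and right twisted structures of $\Nak_{\mathcal{C}}$ form a bimodule structure, from \cite{MR4042867}). The converse direction and the deduction of uniqueness and non-degeneracy from Theorem~\ref{thm:FTC-mod-tr-1} are handled exactly as in the paper.
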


If $\mathcal{C} = \lmod{H}$ for some finite-dimensional Hopf algebra $H$, then the Radford isomorphism corresponds to the distinguished grouplike element of $H$. Thus, in the case where $H$ is pivotal, $\lmod{H}$ is spherical if and only if $H$ is unimodular and unibalanced in the sense of \cite{2018arXiv180100321B}. This corollary can be thought of a generalization of \cite[Theorem 1]{2018arXiv180100321B} to the setting of finite tensor categories.

\begin{proof}
  We make $\Sigma := \id_{\mathcal{C}}$ a $\mathcal{C}$-bimodule functor from $\mathcal{C}$ to ${}_{\DRD}\mathcal{C}_{\DLD}^{}$ by the structure morphisms
  \begin{gather*}
    \Phi_{X,Y}^{\Sigma} : {}^{**}\!X \otimes \Sigma(Y)
    \xrightarrow{\ \mathfrak{p}_{{}^{**} \! X} \otimes \id_Y \ } \Sigma(X \otimes Y), \\
    \Psi_{X,Y}^{\Sigma} : \Sigma(X) \otimes Y^{**}
    \xrightarrow{\ \id_X \otimes \mathfrak{p}_Y^{-1} \ } \Sigma(X \otimes Y).
  \end{gather*}
  We also define $\mathfrak{N}(X) = \alpha_{\mathcal{C}} \otimes X^{**}$ for $X \in \mathcal{C}$. There is an isomorphism
  \begin{equation*}
    \mathfrak{N}(X)
    = \Nak_{\mathcal{C}}(\unitobj) \otimes X^{**}
    \xrightarrow{\quad \Psi^{\Nak}_{\unitobj, X} \quad}
    \Nak_{\mathcal{C}}(X)
    \quad (X \in \mathcal{C})
  \end{equation*}
  of right $\mathcal{C}$-module functors from $\mathcal{C}$ to $\mathcal{C}_{\DLD}$.
  We make $\mathfrak{N}$ a $\mathcal{C}$-bimodule functor from $\mathcal{C}$ to ${}_{\DRD}\mathcal{C}_{\DLD}^{}$ in such a way that the above isomorphism $\mathfrak{N} \cong \Nak_{\mathcal{C}}$ is in fact an isomorphism of $\mathcal{C}$-bimodule functors. By the definition of the Radford isomorphism, we see that the resulting structure morphism of $\mathfrak{N}$ as a twisted left $\mathcal{C}$-module functor is given by
  \begin{equation*}
    \Phi^{\mathfrak{N}}_{X,Y} : {}^{**}\!X \otimes \mathfrak{N}(Y)
    \xrightarrow{\quad g_X \otimes \id_{Y^{**}} \quad}
    \mathfrak{N}(X \otimes Y)
  \end{equation*}
  for $X, Y \in \mathcal{C}$.

  We first prove the `if' part.
  Suppose that $\mathcal{C}$ is spherical. Then, by the definition of the sphericity, there is an isomorphism $f : \unitobj \to \alpha_{\mathcal{C}}$ in $\mathcal{C}$. For $X \in \mathcal{C}$, we set $\xi_X := f \otimes \mathfrak{p}_X$.
  Then $\xi = \{ \xi_X \}$ is an isomorphism of right $\mathcal{C}$-module functors from $\Sigma$ to $\mathfrak{N}$.
  By the sphericity, we have
  \begin{gather*}
    \xi_{X \otimes Y} \circ \Phi^{\Sigma}_{X,Y}
    = (f \otimes \mathfrak{p}_{X \otimes Y}) \circ (\mathfrak{p}_{{}^{**}X} \otimes \id_Y)
    = (f \otimes \mathfrak{p}_X \otimes \mathfrak{p}_Y) \circ (\mathfrak{p}_{{}^{**}X} \otimes \id_Y) \\
    \mathop{=}^{\eqref{eq:def-sphericity}}
    g_X \otimes f \otimes \mathfrak{p}_Y
    = \Phi_{X,Y}^{\mathfrak{N}} \circ (\id_{{}^{**}\!X} \otimes \xi_Y)
  \end{gather*}
  for all $X, Y \in \mathcal{C}$. Thus $\xi : \Sigma \to \mathfrak{N}$ is also an isomorphism of left $\mathcal{C}$-module functors. Hence the $\Sigma$-twisted trace $\trace^{\xi}_{\bullet}$ associated to $\xi$ is a non-degenerate two-sided modified trace on $\Proj(\mathcal{C})$. By Theorem~\ref{thm:FTC-mod-tr-1}, every non-zero two-sided modified trace on $\Proj(\mathcal{C})$ is a scalar multiple of $\trace^{\xi}_{\bullet}$ and thus non-degenerate.

  We prove the converse. Suppose that there is a non-zero two-sided modified trace $\trace_{\bullet}$ on $\Proj(\mathcal{C})$. Then $\trace_{\bullet}$ is non-degenerate by Theorem~\ref{thm:FTC-mod-tr-1} and therefore there is an isomorphism $\xi : \Sigma \to \mathfrak{N}$ of $\mathcal{C}$-bimodule functors. Since, in particular, $\xi$ is an isomorphism of right $\mathcal{C}$-module functors, $f := \xi_{\unitobj}$ is an isomorphism in $\mathcal{C}$ and the equation $\xi_X = f \otimes \mathfrak{p}_X$ holds for all objects $X \in \mathcal{C}$. Now the sphericity follows from that $\mathcal{C}$ is a morphism of left $\mathcal{C}$-module functors. The proof is done.
\end{proof}

\subsection{Ribbon finite tensor categories}

We give some applications of our results to ribbon finite tensor categories.
Let $\mathcal{B}$ be a braided rigid monoidal category with braiding $\sigma$.
The Drinfeld isomorphism ({\it cf}. \cite[Section 5]{MR2097289}) of $\mathcal{B}$ is the natural transformation $u: \id_{\mathcal{B}} \to \DLD$ defined by
\begin{align*}
  u_X
  & = (\eval_X \otimes \id_{X^{**}})
    \circ (\sigma_{X,X^*} \otimes \id_{X^{**}})
    \circ (\id_X \otimes \coev_{X^*})
\end{align*}
for $X \in \mathcal{B}$. The map $\mathfrak{p} \mapsto u^{-1} \circ \mathfrak{p}$ gives a bijection between the set of pivotal structures of $\mathcal{B}$ and the set of natural isomorphisms $\theta : \id_{\mathcal{B}} \to \id_{\mathcal{B}}$ satisfying
\begin{equation}
  \label{eq:def-ribbon-1}
  \theta_{X \otimes Y} = \sigma_{Y,X}\sigma_{X,Y} (\theta_X \otimes \theta_Y)
  \quad (X, Y \in \mathcal{B}).
\end{equation}
A ribbon category \cite{MR1321145} is a braided rigid monoidal category $\mathcal{B}$ equipped with a twist, that is, a natural isomorphism $\theta : \id_{\mathcal{B}} \to \id_{\mathcal{B}}$ satisfying \eqref{eq:def-ribbon-1} and
\begin{equation}
  \label{eq:def-ribbon-2}
  \quad (\theta_{X})^* = \theta_{X^*} \quad (X \in \mathcal{B}).
\end{equation}
Equivalently, a ribbon category is a braided pivotal monoidal category with pivotal structure $\mathfrak{p}$ such that the natural isomorphism $\theta := u^{-1} \circ \mathfrak{p}$ satisfies \eqref{eq:def-ribbon-2}. Such a pivotal structure is called a {\em ribbon pivotal structure} in \cite{2017arXiv170709691S}.

We are interested in when a ribbon finite tensor category admits a non-degenerate two-sided modified trace. Suppose that $\mathcal{B}$ is a braided pivotal unimodular finite tensor category.
By \cite[Lemma 5.9]{2017arXiv170709691S}, $\mathcal{B}$ is a ribbon finite tensor category if and only if $\mathcal{B}$ is spherical. This fact and the previous corollary yield the following consequence:

\begin{corollary}
  \label{cor:ribbon-two-sided-trace}
  Let $\mathcal{C}$ be a ribbon finite tensor category. Then there exists a non-degenerate two-sided modified trace on $\Proj(\mathcal{C})$ if and only if $\mathcal{C}$ is unimodular.
\end{corollary}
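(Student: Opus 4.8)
The statement to prove is Corollary~\ref{cor:ribbon-two-sided-trace}: for a ribbon finite tensor category $\mathcal{C}$, a non-degenerate two-sided modified trace on $\Proj(\mathcal{C})$ exists if and only if $\mathcal{C}$ is unimodular.

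\textbf{Plan of proof.} The strategy is to combine Corollary~\ref{cor:FTC-mod-tr-2} with the cited result \cite[Lemma 5.9]{2017arXiv170709691S}, which characterizes sphericity of a braided pivotal finite tensor category. First I would recall that, by Corollary~\ref{cor:FTC-mod-tr-2}, a non-zero (equivalently, by that corollary, non-degenerate) two-sided modified trace on $\Proj(\mathcal{C})$ exists if and only if $\mathcal{C}$ is spherical. So it suffices to show that, for a ribbon finite tensor category $\mathcal{C}$, sphericity is equivalent to unimodularity.

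\textbf{Key steps.} The forward direction is immediate: by definition a spherical finite tensor category is in particular unimodular, so if $\mathcal{C}$ is spherical then $\mathcal{C}$ is unimodular. For the converse, suppose $\mathcal{C}$ is a ribbon finite tensor category that is unimodular. Since $\mathcal{C}$ is ribbon it is in particular a braided pivotal finite tensor category whose pivotal structure $\mathfrak{p}$ is a ribbon pivotal structure, i.e. $\theta := u^{-1}\circ\mathfrak{p}$ satisfies \eqref{eq:def-ribbon-2}. Now I would invoke \cite[Lemma 5.9]{2017arXiv170709691S}: a braided pivotal unimodular finite tensor category is ribbon if and only if it is spherical. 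Applying this to $\mathcal{C}$, since $\mathcal{C}$ is ribbon and unimodular, we conclude that $\mathcal{C}$ is spherical. Feeding this back into Corollary~\ref{cor:FTC-mod-tr-2} gives the existence (and uniqueness up to scalar, and non-degeneracy) of a non-zero two-sided modified trace on $\Proj(\mathcal{C})$. Conversely, if such a trace exists, Corollary~\ref{cor:FTC-mod-tr-2} forces $\mathcal{C}$ to be spherical, hence unimodular by definition. This closes the equivalence.

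\textbf{Main obstacle.} There is essentially no obstacle here, since all the heavy lifting has already been done: Corollary~\ref{cor:FTC-mod-tr-2} packages the module-theoretic argument (via the Nakayama functor being a simple object of $\Rex_{\mathcal{C}}(\mathcal{C},\mathcal{C}_{\DLD})$, Schur's lemma, and the Radford isomorphism), and \cite[Lemma 5.9]{2017arXiv170709691S} packages the braided-categorical fact that ties the ribbon condition \eqref{eq:def-ribbon-2} to sphericity in the unimodular case. The only point requiring a line of care is making explicit that the pivotal structure underlying the ribbon structure is the one with respect to which ``spherical'' is being tested, and that the preceding discussion (just before the corollary) already records that a braided pivotal unimodular finite tensor category is ribbon precisely when it is spherical; so the proof amounts to a clean citation chain rather than a new computation.

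\begin{proof}
  By Corollary~\ref{cor:FTC-mod-tr-2}, a non-degenerate two-sided modified trace on $\Proj(\mathcal{C})$ exists if and only if $\mathcal{C}$ is spherical. If $\mathcal{C}$ is spherical, then it is unimodular by definition. Conversely, suppose $\mathcal{C}$ is unimodular. Since $\mathcal{C}$ is a ribbon finite tensor category, it is in particular a braided pivotal finite tensor category, and by \cite[Lemma 5.9]{2017arXiv170709691S} a braided pivotal unimodular finite tensor category is ribbon if and only if it is spherical; hence $\mathcal{C}$ is spherical. Applying Corollary~\ref{cor:FTC-mod-tr-2} once more, we conclude that a non-degenerate two-sided modified trace on $\Proj(\mathcal{C})$ exists.
\end{proof}
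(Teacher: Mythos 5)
Your proof is correct and follows essentially the same route as the paper: it combines Corollary~\ref{cor:FTC-mod-tr-2} with \cite[Lemma 5.9]{2017arXiv170709691S} (sphericity $\Leftrightarrow$ ribbon for braided pivotal unimodular finite tensor categories), noting that sphericity includes unimodularity by definition. No gaps.
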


\begin{remark}
  To prove the above corollary, we have used the fact that a unimodular ribbon finite tensor category is spherical \cite[Lemma 5.9]{2017arXiv170709691S}. One can give an alternative proof of this fact from the viewpoint of modified trace theory as follows:
  Let $\mathcal{C}$ be a unimodular ribbon finite tensor category.
  Then a non-degenerate right modified trace $\trace_{\bullet}$ on $\Proj(\mathcal{C})$ exists by Theorem~\ref{thm:FTC-mod-tr-1}.
  One can prove that $\trace_{\bullet}$ is two-sided by the graphical technique in ribbon categories as demonstrated in \cite[Theorem 3.3.1]{MR2803849} (note, however, the definition of an ambidextrous trace \cite[Definition 3.2.3]{MR2803849} is slightly different from that of our two-sided modified trace). Thus, by Corollary~\ref{cor:FTC-mod-tr-2}, $\mathcal{C}$ is spherical.
\end{remark}

A {\em modular tensor category} \cite{MR1862634,MR3996323} is a ribbon finite tensor category that is {\em factorisable} in the sense of \cite[Definition 8.6.2]{MR3242743}. Gainutdinov and Runkel proved that a modular tensor category admits a non-degenerate two-sided modified trace under a technical assumption called Condition P \cite[Corollary 4.7]{MR4079628}. Slightly generalizing their result, we have:

\begin{corollary}
  A modular tensor category admits a non-degenerate two-sided modified trace.
\end{corollary}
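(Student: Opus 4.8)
The plan is to obtain the statement as an immediate consequence of Corollary~\ref{cor:ribbon-two-sided-trace} once one knows that factorisability forces unimodularity. First I would unwind the definitions: a modular tensor category is, in the sense of \cite[Definition 8.6.2]{MR3242743}, a ribbon finite tensor category $\mathcal{C}$ that is \emph{factorisable}, i.e.\ such that the canonical braided functor $\mathcal{C} \boxtimes \mathcal{C}^{\rev} \to \mathcal{Z}(\mathcal{C})$ to the Drinfeld center is an equivalence (equivalently, the monodromy, or the associated Hopf pairing on the coend, is non-degenerate). In particular such a $\mathcal{C}$ is a ribbon finite tensor category, so Corollary~\ref{cor:ribbon-two-sided-trace} applies and reduces the claim to showing that $\mathcal{C}$ is unimodular, i.e.\ that $\alpha_{\mathcal{C}} \cong \unitobj$ in the notation of Definition~\ref{def:modular-object}.

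Next I would invoke the known fact that every factorisable finite tensor category is unimodular. In the Hopf-algebra case this is classical (a factorisable Hopf algebra is unimodular, which follows from non-degeneracy of the monodromy matrix), and the categorical generalization is due to Shimizu \cite{MR3996323}: the factorisability datum identifies the distinguished invertible object $D = \alpha_{\mathcal{C}}^{*}$ with the unit object via a non-degeneracy computation in the Drinfeld center, so that $\alpha_{\mathcal{C}} \cong \unitobj$. Combining this with the previous paragraph, Corollary~\ref{cor:ribbon-two-sided-trace} yields a non-degenerate two-sided modified trace on $\Proj(\mathcal{C})$, which is moreover unique up to scalar multiple by that corollary.

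The only substantial point in the argument is the implication ``factorisable $\Rightarrow$ unimodular''; everything else is a direct appeal to Corollary~\ref{cor:ribbon-two-sided-trace}. Since that implication is already available in the literature, the proof as I would present it collapses to essentially two lines, and a self-contained account would only be needed if one wished to avoid citing \cite{MR3996323}.
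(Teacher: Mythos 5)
Your argument is correct and is essentially the paper's own proof: both reduce the claim to Corollary~\ref{cor:ribbon-two-sided-trace} together with the fact that a factorisable braided finite tensor category is unimodular (the paper cites \cite[Proposition 8.10.10]{MR3242743} for this, while you cite \cite{MR3996323}; either source suffices). The only minor slip is attributing uniqueness up to scalar to Corollary~\ref{cor:ribbon-two-sided-trace}, which states only existence and non-degeneracy; uniqueness comes from Corollary~\ref{cor:FTC-mod-tr-2}, but it is not needed for the statement anyway.
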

\begin{proof}
  This follows Corollary~\ref{cor:ribbon-two-sided-trace} and the fact that a factorisable braided finite tensor category is unimodular \cite[Proposition 8.10.10]{MR3242743}.
\end{proof}

\section{Hopf algebras and comodule algebras}
\label{sec:comod-algebras}

\subsection{Module categories and comodule algebras}

Throughout this section, we work over a general field $\bfk$.
Let $H$ be a finite-dimensional Hopf algebra with comultiplication $\Delta$, counit $\varepsilon$ and antipode $S$. The Sweedler notation, such as $\Delta(h) = h_{(1)} \otimes h_{(2)}$, will be used to express the comultiplication. Given a right $H$-comodule $M$, we denote the coaction by $\delta_M : M \to M \otimes_{\bfk} H$ and express it as $\delta_M(m) = m_{(0)} \otimes m_{(1)}$.

A right $H$-comodule algebra is an algebra $A$ equipped with a right $H$-comodule structure $\delta_A$ such that the equations $\delta_A(1_A) = 1_A \otimes 1_H$ and $\delta_A(a b) = a_{(0)} b_{(0)} \otimes a_{(1)} b_{(1)}$ hold for all $a, b \in A$. If $A$ is a right $H$-comodule algebra, then $\lmod{A}$ is a finite right module category over the finite tensor category $\mathcal{C} := \lmod{H}$ by $M \catactr X = M \otimes_{\bfk} X$ ($M \in \lmod{A}$, $X \in \mathcal{C}$), where $A$ acts on $M \catactr X$ from the left by
\begin{equation*}
  a \cdot (m \otimes x) = a_{(0)} m \otimes a_{(1)} x
  \quad (a \in A, m \in M, x \in X).
\end{equation*}
It is known that every finite right $\mathcal{C}$-module category is equivalent to $\lmod{A}$ for some finite-dimensional right $H$-comodule algebra $A$ \cite{MR2331768}. The aim of this section is to describe and compute module-compatible twisted traces for such right $\mathcal{C}$-module categories.

From now on, $H$-comodule algebras are always assumed to be finite-dimensional. Let $A$ be a right $H$-comodule algebra. We pick a finite-dimensional $A$-bimodule $\Sigma$ and identify it with a $\bfk$-linear right exact endofunctor on $\lmod{A}$. In our framework, the functor $\Sigma$ is required to be a `twisted' right $\mathcal{C}$-module functor in order that the module-compatibility of a $\Sigma$-twisted trace makes sense. Thus we first discuss when the functor $\Sigma$ has such a structure.

Let $A$ and $B$ be right $H$-comodule algebras. We define the category $(\bimod{B}{A})^H$ of finite-dimensional $H$-equivariant $B$-$A$-bimodules ({\it cf}. \cite[Definition 1.22]{MR2331768}) as follows: An object of this category is a finite-dimensional $B$-$A$-bimodule $M$ equipped with a right $H$-coaction $\delta_M$ such that the equation
\begin{equation}
  \label{eq:H-eq-bimod-def}
  \delta_{M}(b m a) = b_{(0)} m_{(0)} a_{(0)} \otimes b_{(1)} m_{(1)} a_{(1)}
\end{equation}
holds for all $m \in M$, $a \in A$ and $b \in B$. A morphism of this category is a left $B$-linear, right $A$-linear and right $H$-colinear map.

We recall that an object $T \in \bimod{B}{A}$ defines a $\bfk$-linear right exact functor $T \otimes_{A}(-)$ from $\lmod{A}$ to $\lmod{B}$. If $T$ belongs to the category $(\bimod{B}{A})^H$, then the functor $T \otimes_A (-)$ is an oplax right $\mathcal{C}$-module functor by the structure map given by
\begin{equation}
  \label{eq:H-eq-bimod-oplax}
  \begin{aligned}
    T \otimes_A (M \catactr X) & \to (T \otimes_A M) \catactr X, \\
    t \otimes_A (m \otimes x) & \mapsto (t_{(0)} \otimes_A m) \otimes t_{(1)} x
  \end{aligned}
\end{equation}
for $m \in M \in \lmod{A}$, $x \in X \in \mathcal{C}$ and $t \in T$. Since $\mathcal{C}$ is rigid, the oplax right $\mathcal{C}$-module structure \eqref{eq:H-eq-bimod-oplax} is invertible whose inverse is given by
\begin{equation}
  \label{eq:H-eq-bimod-lax}
  \begin{aligned}
    (T \otimes_A M) \catactr X
    & \to T \otimes_A (M \catactr X), \\
    (t \otimes_A m) \otimes x
    & \mapsto t_{(0)} \otimes_A (m \otimes S(t_{(1)})x).
  \end{aligned}
\end{equation}
The Eilenberg-Watts equivalence \eqref{eq:EW-equivalence} induces an equivalence
\begin{equation}
  \label{eq:H-eq-EW}
  (\bimod{B}{A})^H \to \Rex_{\mathcal{C}}(\lmod{A}, \lmod{B}),
  \quad T \mapsto T \otimes_A (-)
\end{equation}
of $\bfk$-linear categories \cite{MR2331768}, which we call {\em the equivariant Eilenberg-Watts equivalence}.

What we actually want to know is the category $\Rex_{\mathcal{C}}(\lmod{A}, (\lmod{A})_{\DLD})$. To describe this category, we introduce the right $H$-comodule algebra $A^{(S^2)}$ as follows: As an algebra, $A^{(S^2)} = A$. The right $H$-coaction is twisted by the square of the antipode, as $\delta_{A^{(S^2)}} = (\id_A \otimes S^2) \circ \delta_A$. Now we prove:

\begin{lemma}
  The identity functor on $\lmod{A}$ induces an isomorphism $\lmod{A^{(S^2)}} \cong (\lmod{A})_{\DLD}$ of right $\mathcal{C}$-module categories.
\end{lemma}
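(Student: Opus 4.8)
The plan is to unwind the definitions on both sides and exhibit the structure isomorphisms explicitly. Recall that $(\lmod{A})_{\DLD}$ is $\lmod{A}$ with the right $\mathcal{C}$-action twisted by the double left dual $\DLD(X) = X^{**}$ of $\mathcal{C} = \lmod{H}$; concretely, $M \catactr_{\DLD} X = M \otimes_{\bfk} X^{**}$, where the left $A$-module structure is $a \cdot (m \otimes \phi) = a_{(0)} m \otimes a_{(1)} \cdot \phi$ and $h \cdot \phi = \phi \circ S^2(h)$ describes the $H$-action on $X^{**}$ (using the standard description of the double dual in $\lmod{H}$ via $S^2$). On the other side, $\lmod{A^{(S^2)}}$ is a right $\mathcal{C}$-module category via $N \catactr X = N \otimes_{\bfk} X$ with left action $a \cdot (n \otimes x) = a_{(0)} n \otimes S^2(a_{(1)}) x$, since the coaction of $A^{(S^2)}$ is $\delta_{A^{(S^2)}}(a) = a_{(0)} \otimes S^2(a_{(1)})$.

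First I would set up the underlying functor. As categories, $\lmod{A^{(S^2)}}$ and $\lmod{A}$ have the same objects and morphisms because $A^{(S^2)} = A$ as an algebra; so the identity functor $\id$ is the candidate equivalence at the level of abelian categories, and it trivially respects all the structure of a finite abelian category. What remains is to promote $\id$ to an isomorphism of right $\mathcal{C}$-module categories, i.e. to supply a natural isomorphism $\id(N) \catactr_{\DLD} X \cong \id(N \catactr X)$ in $\lmod{A}$, that is, $N \otimes_{\bfk} X^{**} \cong N \otimes_{\bfk} X$, which is compatible with the associativity and unit constraints of the two module structures. The natural candidate is $\id_N \otimes_{\bfk} \kappa_X$, where $\kappa_X : X^{**} \to X$ is the canonical $\bfk$-linear identification of $X$ with its double dual (forgetting the $H$-action). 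This $\kappa_X$ is not a morphism in $\mathcal{C}$, but that is precisely the point of the twist $\DLD$: the twisted action of $X$ uses the $H$-module $X^{**}$, and $\kappa_X$ intertwines the $S^2$-twisted $H$-action on $X^{**}$ with the genuine $H$-action on $X$.

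The key steps, then, are: (1) verify that $\id_N \otimes_{\bfk} \kappa_X : N \otimes_{\bfk} X^{**} \to N \otimes_{\bfk} X$ is a morphism of left $A$-modules, i.e. that it intertwines the action $a \cdot (n \otimes \phi) = a_{(0)} n \otimes a_{(1)}\phi$ on the source (here $a_{(1)}$ acts on $X^{**}$, so $a_{(1)} \phi = \phi \circ S^2(a_{(1)})$, whence $\kappa_X(a_{(1)}\phi) = S^2(a_{(1)}) \kappa_X(\phi)$) with the action $a \cdot (n \otimes x) = a_{(0)} n \otimes S^2(a_{(1)}) x$ of $A^{(S^2)}$ on the target — this is a one-line check once $\kappa_X$'s interaction with the $H$-action is spelled out; (2) check naturality in $X$, which follows because $\kappa$ is natural in $X$ as a transformation $\DLD \Rightarrow \id$ of underlying-vector-space functors, and naturality in $N$ is obvious; (3) check compatibility with the module-category associativity constraints $(N \catactr X) \catactr Y \cong N \catactr (X \otimes Y)$ on both sides — on the $\DLD$-twisted side one uses $(X \otimes Y)^{**} \cong X^{**} \otimes Y^{**}$ and the fact that $\kappa_{X \otimes Y}$ corresponds to $\kappa_X \otimes \kappa_Y$ under this identification, together with $S^2$ being an algebra map and a coalgebra anti-map so that $\delta_{A^{(S^2)}}$ really is a comodule-algebra structure; and (4) check the unit constraint, which is immediate since $\kappa_{\unitobj} = \id_{\bfk}$.

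I expect the only mildly delicate point to be bookkeeping in step (3): matching the associator of the double-dual-twisted action against the associator of $\lmod{A^{(S^2)}}$ requires carefully tracking how the canonical isomorphism $(X \otimes Y)^{**} \cong X^{**} \otimes Y^{**}$ interacts with both the $H$-actions and the evaluation/coevaluation data defining the module associators, and using that $S$ is an anti-automorphism of both the algebra and coalgebra structures of $H$ (so $S^2$ is an automorphism of each). None of this is conceptually hard, but it is the place where a sign or an order-reversal could slip in, so it deserves the bulk of the written-out verification. Everything else — that $\id$ is an equivalence of abelian categories, $\bfk$-linearity, and the unit axiom — is automatic. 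I would therefore organize the proof as: recall the two module structures explicitly, define $\kappa$, state that the module-functor structure is $\id \otimes \kappa$, and then dispatch (1)--(4) in that order, expanding (3) in full.
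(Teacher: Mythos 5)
Your proposal is correct and follows essentially the same route as the paper: take the identity functor and equip it with the structure morphism $N \otimes_{\bfk} X^{**} \to N \otimes_{\bfk} X$ given by the canonical vector-space identification, the whole point being the identity $h \cdot \phi_X(x) = \phi_X(S^2(h)x)$ (equivalently your $\kappa_X(h\cdot\phi) = S^2(h)\kappa_X(\phi)$), which is exactly the equation the paper records before leaving the remaining checks to the reader. The only quibble is the shorthand ``$h\cdot\phi = \phi\circ S^2(h)$'', which does not literally typecheck as precomposition on $X^*$, but the identity you actually use from it is the correct one.
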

\begin{proof}
  Let $F : \lmod{A^{(S^2)}} \to (\lmod{A})_{\DLD}$ be the identity functor. For a finite-dimensional vector space $V$, we denote by $\phi_V : V \to V^{**}$ the canonical isomorphism of vector spaces defined by $\phi_V(v) = \langle -, v\rangle$ for $v \in V$. If $X \in \mathcal{C}$, then we have
  \begin{equation}
    h \cdot \phi_X(x) = \phi_X(S^2(h) x)
    \quad (h \in H, x \in X)
  \end{equation}
  by the definition of a left dual $H$-module. Noting this equation, one can verify that $F$ is a right $\mathcal{C}$-module functor by the structure morphism defined by
  \begin{equation*}
    F(M) \catactr X^{**} \to F(M \catactr X),
    \quad m \otimes \phi_X(x) \mapsto m \otimes x
  \end{equation*}
  for $m \in M \in \lmod{A^{(S^2)}}$ and $x \in X \in \mathcal{C}$. The proof is done.
\end{proof}

By this lemma and the equivariant Eilenberg-Watts equivalence~\eqref{eq:H-eq-EW}, we obtain:

\begin{lemma}
  \label{lem:H-eq-EW-2}
  $\Rex(\lmod{A}, (\lmod{A})_{\DLD}) \approx (\bimod{A^{(S^2)}}{A})^H$.
\end{lemma}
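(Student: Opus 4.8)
The plan is to obtain the asserted equivalence by composing two equivalences that are already at hand: the isomorphism of right $\mathcal{C}$-module categories from the preceding lemma, and the equivariant Eilenberg--Watts equivalence \eqref{eq:H-eq-EW} applied with a suitably twisted comodule algebra in place of $B$.

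First I would record the following general and routine fact: if $G : \mathcal{N} \to \mathcal{N}'$ is an equivalence of right $\mathcal{C}$-module categories between finite abelian categories, then post-composition with $G$ defines an equivalence $\Rex_{\mathcal{C}}(\mathcal{M}, \mathcal{N}) \to \Rex_{\mathcal{C}}(\mathcal{M}, \mathcal{N}')$, $F \mapsto G \circ F$, for every finite abelian category $\mathcal{M}$ on which $\mathcal{C}$ acts linearly. Indeed, the composite of two right $\mathcal{C}$-module functors is again a right $\mathcal{C}$-module functor; $G$ is exact, being an equivalence, so $G \circ F$ is right exact whenever $F$ is; and a quasi-inverse is given by post-composition with a quasi-inverse of $G$, which inherits a right $\mathcal{C}$-module structure.

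Now apply this with $\mathcal{M} = \lmod{A}$, $\mathcal{N} = \lmod{A^{(S^2)}}$ and $\mathcal{N}' = (\lmod{A})_{\DLD}$, taking $G$ to be the isomorphism of right $\mathcal{C}$-module categories furnished by the previous lemma. This yields $\Rex_{\mathcal{C}}(\lmod{A}, \lmod{A^{(S^2)}}) \approx \Rex_{\mathcal{C}}(\lmod{A}, (\lmod{A})_{\DLD})$. On the other hand, the equivariant Eilenberg--Watts equivalence \eqref{eq:H-eq-EW}, with $B$ replaced by the $H$-comodule algebra $A^{(S^2)}$, gives $(\bimod{A^{(S^2)}}{A})^H \approx \Rex_{\mathcal{C}}(\lmod{A}, \lmod{A^{(S^2)}})$. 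Composing the two equivalences proves the lemma.

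There is essentially no real obstacle here: the statement is a formal consequence of results already established, and the only point that needs a line of justification is that post-composition with a module equivalence is an equivalence on the relevant categories of module functors (including preservation of right exactness), which is immediate.
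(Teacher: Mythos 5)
Your proposal is correct and follows exactly the paper's route: the paper obtains the lemma by combining the preceding lemma (the identity functor gives an isomorphism $\lmod{A^{(S^2)}} \cong (\lmod{A})_{\DLD}$ of right $\mathcal{C}$-module categories) with the equivariant Eilenberg--Watts equivalence \eqref{eq:H-eq-EW} taken with $B = A^{(S^2)}$. Your extra paragraph spelling out that post-composition with a module equivalence preserves right exactness and yields an equivalence of module-functor categories is just the routine detail the paper leaves implicit.
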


Spelling out, an object of the category $(\bimod{A^{(S^2)}}{A})^H$ is a finite-dimensional $A$-bimodule $\Sigma$ equipped with a right $H$-comodule structure $\delta_{\Sigma}$ satisfying
\begin{equation}
  \label{eq:H-eq-bimod-twisted-1}
  \delta_{\Sigma}(a s a') = a_{(0)} s_{(0)} a'_{(0)} \otimes S^2(a_{(1)}) s_{(1)} a'_{(2)}
  \quad (a, a' \in A, s \in \Sigma).
\end{equation}
The equivalence of the lemma sends an object $\Sigma \in (\bimod{A^{(S^2)}}{A})^H$ to the functor $\Sigma \otimes_A (-)$, which is an oplax right $\mathcal{C}$-module functor from $\lmod{A}$ to $(\lmod{A})_{\DLD}$ by the structure morphism given by
\begin{equation}
  \label{eq:H-eq-bimod-twisted-2}
  \begin{aligned}
    \Sigma \otimes_A (M \catactr X)
    & \to (\Sigma \otimes_A M) \catactr X^{**}, \\
    s \otimes_A (m \otimes x)
    & \mapsto (s_{(0)} \otimes_A m) \otimes \phi_X(s_{(1)} x)
  \end{aligned}
\end{equation}
for $m \in M \in \lmod{A}$, $x \in X \in \mathcal{C}$ and $s \in \Sigma$.

\subsection{The Nakayama functor as an equivariant bimodule}

Let $A$ be a right $H$-comodule algebra. Then the Nakayama functor $\Nak := \Nak_{\lmod{A}}$ is given by tensoring the bimodule $A^*$. Since $\Nak$ is a right $\mathcal{C}$-module functor from $\lmod{A}$ to $(\lmod{A})_{\DLD}$, the bimodule $A^*$ should have a right $H$-comodule structure making it an object of $(\bimod{A^{(S^2)}}{A})^H$ in view of the equivalence given by Lemma~\ref{lem:H-eq-EW-2}.

There is a right $H$-comodule structure of $A^*$ given as follows:
Let $\{ a_i \}$ be a basis of $A$ (as a vector space), and let $\{ a^i \}$ be the basis of $A^*$ dual to $\{ a_i \}$. We define the linear map $\delta_{A^*} : A^* \to A^* \otimes_{\bfk} H$ by
\begin{equation}
  \label{eq:Naka-bimod-coaction-1}
  \delta_{A^*}(a^*) = \langle a^*, a_{i(0)} \rangle a^i \otimes S(a_{i(1)})
  \quad (a^* \in A^*),
\end{equation}
where the summation over $i$ is understood. The element $a^*_{(0)} \otimes a^*_{(1)} = \delta_{A^*}(a^*)$ is characterized by
\begin{equation}
  \label{eq:Naka-bimod-coaction-2}
  \langle a^*_{(0)}, a \rangle a^*_{(1)} = \langle a^*, a_{(0)} \rangle S(a_{(1)})
  \quad (a \in A).
\end{equation}
One can verify that $\delta_{A^*}$ makes the $A$-bimodule $A^*$ an object of $(\bimod{A^{(S^2)}}{A})^H$. This observation itself is not new as it has been remarked in \cite{MR3537815}. The point not mentioned in existing literature is that the $H$-comodule structure $\delta_{A^*}$ originates from the twisted module structure of the Nakayama functor. Namely, we have:

\begin{theorem}
  \label{thm:Nakayama-as-equivariant-bimodule}
  The object $A^* \in (\bimod{A^{(S^2)}}{A})^H$ corresponds to the Nakayama functor of $\lmod{A}$ under the equivalence of Lemma~\ref{lem:H-eq-EW-2}.
\end{theorem}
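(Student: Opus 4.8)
The plan is to compare, under the equivariant Eilenberg--Watts equivalence of Lemma~\ref{lem:H-eq-EW-2}, the twisted right $\mathcal{C}$-module structure that this equivalence attaches to the functor $A^{*} \otimes_{A} (-)$ via the coaction $\delta_{A^{*}}$ with the intrinsic twisted module structure $\Psi^{\Nak}$ of the Nakayama functor recalled in Subsection~\ref{subsec:Naka-tw-module}. By Lemma~\ref{lem:Nakayama-A-mod} the functors $\Nak_{\lmod{A}}$ and $A^{*} \otimes_{A} (-)$ coincide as $\bfk$-linear functors, so the theorem asserts precisely that these two module structures agree. Since Lemma~\ref{lem:H-eq-EW-2} is an equivalence, the object of $(\bimod{A^{(S^2)}}{A})^{H}$ corresponding to $(\Nak_{\lmod{A}}, \Psi^{\Nak})$ has underlying bimodule $A^{*}$ and \emph{some} coaction $\delta$, and the whole problem reduces to showing $\delta = \delta_{A^{*}}$. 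For this it suffices, since both module structures are natural transformations between right exact functors of $M$ for each fixed $X \in \mathcal{C}$ (cf.\ the argument via Lemma~\ref{lem:restriction-to-proj} used in the proof of Theorem~\ref{thm:tw-tr-module-compati}), to compare the two structures on the projective generator $M = A$; and to read off $\delta$ it is then enough to take $X = H$ with its regular left action, evaluating at $1_{H}$.

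The side coming from $\delta_{A^{*}}$ is immediate: by formula \eqref{eq:H-eq-bimod-twisted-2} (or rather its inverse, in the spirit of \eqref{eq:H-eq-bimod-lax}), evaluated at $M = A$ and $X = H$, the attached module structure is encoded directly by $\delta_{A^{*}}$ through the characterization \eqref{eq:Naka-bimod-coaction-2}. So the substance of the argument is to compute $\Psi^{\Nak}_{A, X}$ explicitly. Rather than unwinding the chain \eqref{eq:Naka-adj-1}--\eqref{eq:Naka-twisted-mod-str-def} of canonical isomorphisms directly, I would use Lemma~\ref{lem:Naka-tw-mod-lem-1}, which expresses $\Psi^{\Nak}_{M, V}$ in terms of the universal dinatural transformation $\mathbb{i}$, the canonical $\Vect$-module structure of the functor $(-) \catactr V^{**}$ (eq.~\eqref{eq:cano-Vect-act-tensor}), and the linear dual of the closing operator $\close_{M, X | V}$. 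Since $\Nak(M) = A^{*} \otimes_{A} M$ is a coend, it is generated by the elements $\mathbb{i}_{X, M}(\varphi_{X, M}(x^{*} \otimes_{A} m) \otimes_{\bfk} x)$, which by the explicit identity \eqref{eq:Nak-A-mod-lem-3} equal $\langle x^{*}, ? x \rangle \otimes_{A} m$. Chasing such an element with $M = A$ through the diagram of Lemma~\ref{lem:Naka-tw-mod-lem-1} --- using that $\close_{A, X | V}$ is the partial trace over $V$, pairing the $V^{*}$ produced by $\coev_{V}$ against the $V^{**}$ in the object $X \catactr V^{**}$ at which the universal dinatural transformation of $\Nak(A \catactr V)$ is evaluated, and then applying \eqref{eq:Nak-A-mod-lem-3} once more --- produces a closed formula for $\Psi^{\Nak}_{A, X}$ in terms of a pair of dual bases of $X$. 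Comparing this against \eqref{eq:Naka-bimod-coaction-1}--\eqref{eq:Naka-bimod-coaction-2} then identifies the resulting coaction with $\delta_{A^{*}}$. An essentially equivalent alternative would start from the facts, proved above, that the canonical Nakayama-twisted trace $\nakac{\trace}_{\bullet}$ is module-compatible (Lemma~\ref{lem:Naka-tw-mod-lem-2}) and non-degenerate, so that $\Psi^{\Nak}_{P, X} \circ f$ is pinned down by the scalars $\nakac{\trace}_{P}(\close_{P, \Nak(P) | X}(f \circ g))$ as $g$ varies, all of which are explicit by Lemma~\ref{lem:cano-Naka-tw-tr-A-mod}.

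The main obstacle I anticipate is the bookkeeping of antipode powers and of the dual $H$-module structures appearing in this computation. For $V \in \lmod{H}$, the $H$-action on the left dual $V^{*}$ is twisted by $S$ and that on $V^{**}$ by $S^{2}$; the closing operator and the adjunction $(-) \catactr V \dashv (-) \catactr V^{*}$ bring these together, while the isomorphism $\lmod{A^{(S^2)}} \cong (\lmod{A})_{\DLD}$ contributes the $S^{2}$-twist visible in \eqref{eq:H-eq-bimod-twisted-1}. The delicate point is to verify that the \emph{single} antipode $S(a_{i(1)})$ in \eqref{eq:Naka-bimod-coaction-1} emerges with exactly the right side and power --- coming from the $H$-module structure of $V^{*}$ and not getting doubled or inverted by the ambient $V^{**}$ --- together with the routine but bracket-heavy check \eqref{eq:H-eq-bimod-twisted-1} that $(A^{*}, \delta_{A^{*}})$ indeed lies in $(\bimod{A^{(S^2)}}{A})^{H}$ (already observed in \cite{MR3537815}). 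Once the conventions are pinned down, both the comparison and this check are straightforward Hopf-algebraic manipulations.
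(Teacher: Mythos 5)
Your proposal follows essentially the same route as the paper's proof: identify $\Nak_{\lmod{A}}$ with $A^*\otimes_A(-)$ as functors, then use Lemma~\ref{lem:Naka-tw-mod-lem-1} together with the universal property of the coend and the identity \eqref{eq:Nak-A-mod-lem-3} to chase generating elements and match $\Psi^{\Nak}$ against the structure morphism induced by $\delta_{A^*}$, exactly the comparison the paper carries out (its only additional ingredient being the auxiliary identity \eqref{eq:Naka-bimod-pf-1} with dual bases of $V$, which is the ``bookkeeping'' you correctly flag). Your extra reduction to $M=A$ and $X=H$ via Lemma~\ref{lem:restriction-to-proj} is a valid but inessential variation; the paper simply performs the same element chase for general $M$ and $V$.
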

\begin{proof}
  We define the functor $\mathfrak{N} : \lmod{A} \to \lmod{A}$ by $\mathfrak{N}(M) := A^* \otimes_A M$ ($M \in \lmod{A}$) and equip it with a structure of a (lax) right $\mathcal{C}$-module functor from $\lmod{A}$ to $(\lmod{A})_{\DLD}$ by the inverse of \eqref{eq:H-eq-bimod-twisted-2}. Explicitly, the structure morphism of $\mathfrak{N}$ is given by
  \begin{equation}
    \begin{aligned}
      \label{eq:Naka-bimod-tw-mod}
      \Psi^{\mathfrak{N}}_{M,V} : \mathfrak{N}(M) \catactr V^{**}
      & \to \mathfrak{N}(M \catactr V), \\
      (a^* \otimes_{A} m) \otimes \phi_V(v)
      & \mapsto a^*_{(0)} \otimes_{A} (m \otimes S(a^*_{(1)})v) \\
    \end{aligned}
  \end{equation}
  for $m \in M \in \lmod{A}$, $v \in V \in \mathcal{C}$ and $a^* \in A^*$. As a functor, $\mathfrak{N}$ is identical to $\Nak$. We prove this theorem by showing that the structure morphism of $\mathfrak{N}$ is same as that of $\Nak$. By Lemma~\ref{lem:Naka-tw-mod-lem-1} and the universal property of $\Nak(M)$ as a coend, this is equivalent to that the diagram
  \begin{equation*}
    \begin{tikzcd}[column sep = 8em]
      \mathfrak{N}(M) \catactr V^{**}
      \arrow[d, "\Psi^{\mathfrak{N}}_{M,V}"']
      & (\Hom_{A}(M, X)^* \otimes_{\bfk} X) \catactr V^{**}
      \arrow[l, "\mathbb{i}_{X,M} \otimes \id_{V^{**}}"]
      \arrow[d, "(\close_{M, X | V})^* \otimes \id_{X} \otimes \id_{V^{**}}"] \\
      \mathfrak{N}(M \catactr V)
      & \Hom_{A}(M \catactr V^{**}, X \catactr V)^* \otimes_{\bfk} (X \catactr V^{**})
      \ar[l, "\mathbb{i}_{X \catactr V, M \catactr V^{**}}"]
    \end{tikzcd}
  \end{equation*}
  commutes for all objects $M, X \in \lmod{A}$ and $V \in \mathcal{C}$, where the associativity isomorphism for $\Vect$ is treated as the identity map.

  To prove the commutativity of the diagram, we require a technical identity \eqref{eq:Naka-bimod-pf-1} below. Let $\{ v_j \}$ be a basis of $V$, and let $\{ v^j \}$ be the basis of $V^*$ dual to $\{ v_j \}$. Then, as is well-known, the equation $h v_j \otimes v^j = v_j \otimes (v^j \leftharpoonup h)$ in $V \otimes_{\bfk} V^*$ holds for all $h \in H$, where the Einstein convention is used to suppress the sum over $j$. This allows us to define the $\bfk$-linear map
  \begin{align*}
    \Xi_{M, X | V} : X^* \otimes_A M
    & \to (X \catactr V^{**})^* \otimes_A (M \catactr V), \\
    x^* \otimes_A m
    & \mapsto (x^* \otimes \phi_{V^*}(v^j)) \otimes_A (m \otimes v_j),
  \end{align*}
  where $(X \catactr V^{**})^*$ is identified with $X^* \otimes V^{***}$ as a vector space. We note that the equation
  \begin{equation}
    \label{eq:Naka-bimod-pf-1}
    (\close_{M,X|V})^* \circ \varphi_{X, M}
    = \varphi_{X \catactr V^{**}, M \catactr V} \circ \Xi_{M, X | V}
  \end{equation}
  holds, where $\varphi_{-,-}$ is the natural isomorphism \eqref{eq:Nak-A-mod-lem-2}. Indeed, we have
  \begin{gather*}
    \langle (\close_{M,X|V})^* \varphi_{X,M}(x^* \otimes m), f \rangle
    = \langle \varphi_{X,M}(x^* \otimes m), \close_{M,X|V}(f) \rangle
    = \langle x^* , \close_{M,X|V}(f)(m) \rangle \\
    = \langle x^* \otimes \phi_{V^*}(v^j), f(m \otimes v_j) \rangle
    = \langle \varphi_{X \catactr V^{**}, M \catactr V} \Xi_{M, X | V} (x^* \otimes_A m), f \rangle
  \end{gather*}
  for $x^* \in X^*$, $m \in M$ and $f \in \Hom_{A}(M \catactr V, X \catactr V^{**})$.

  Now we pick elements $x^* \in X^*$, $x \in X$, $m \in M$ and $v \in V$ and chase the element
  \begin{equation*}
    w := \varphi_{X,M}(x^* \otimes_A m) \otimes x \otimes \phi_V(v)
    \in \Hom_{A}(M, X)^* \otimes_{\bfk} X \catactr V^{**}
  \end{equation*}
  around the diagram. For simplicity of notation, we write
  \begin{equation*}
    F_1 := \Psi^{\mathfrak{N}}_{M,V}(\mathbb{i}_{X,M} \otimes \id_{V^{**}})
    \quad \text{and} \quad
    F_2 := \mathbb{i}_{X \catactr V^{**}, M \catactr V} ((\close_{M,X|V})^* \otimes \id_X \otimes \id_{V^{**}}).
  \end{equation*}
  By \eqref{eq:Nak-A-mod-lem-3}, \eqref{eq:Naka-bimod-coaction-1}, \eqref{eq:Naka-bimod-tw-mod} and \eqref{eq:Naka-bimod-pf-1}, we compute
  \begin{align*}        
    F_1(w)
    & = \Psi^{\mathfrak{N}}_{M,V}((\langle x^*, ? x \rangle \otimes_A m) \otimes \phi_V(v)) \\
    & = \langle x^*, a_{i(0)} x \rangle a^i \otimes_{A} (m \otimes S^2(a_{i(1)}) v), \\
    F_2(w)
    & = \mathbb{i}_{X \catactr V^{**}, M \catactr V}
      (\varphi_{X \catactr V^{**}, M \catactr V}\Xi_{M, X | V}(x^* \otimes_A m)
      \otimes x \otimes \phi_V(v)) \\
    & = \mathbb{i}_{X \catactr V^{**}, M \catactr V}
      (\varphi_{X \catactr V^{**}, M \catactr V}((x^* \otimes \phi_{V^*}(v^j)) \otimes_A (m \otimes v_j))
      \otimes x \otimes \phi_V(v)) \\
    & = \langle x^* \otimes \phi_{V^*}(v^j), a_{i(0)} x \otimes a_{i(1)} \phi_V(v)) \rangle a^i
      \otimes_A (m \otimes v_j) \\
    & = \langle x^* \otimes \phi_{V^*}(v^j), a_{i(0)} x \otimes \phi_V(S^2(a_{i(1)})v)) \rangle a^i
      \otimes_A (m \otimes v_j) \\
    & = \langle x^*, a_{i(0)} x \rangle \langle v^j, S^2(a_{i(1)}) v) \rangle a^i
      \otimes_A (m \otimes v_j) \\
    & = \langle x^*, a_{i(0)} x \rangle a^i
      \otimes_A (m \otimes S^2(a_{i(1)}) v).
  \end{align*}
  Since $\varphi_{X,M}$ is an isomorphism, $F_1 = F_2$. The proof is done.
\end{proof}

\subsection{Twisted traces and cointegrals}

Let $A$ be a right $H$-comodule algebra, and let $\Sigma \in (\bimod{A^{(S^2)}}{A})^H$. We regard $\Sigma$ as a $\bfk$-linear right exact twisted $\mathcal{C}$-module functor by Lemma~\ref{lem:H-eq-EW-2}. As we have proved in Section~\ref{sec:twist-trace-HH}, the space $\TwTr(\Sigma)$ of $\Sigma$-twisted modified traces on $\lproj{A}$ is in bijection with $\HH_0(\Sigma)^*$. From the viewpoint of this bijection, module-compatible $\Sigma$-twisted traces are described as follows:

\begin{theorem}
  \label{thm:mod-tr-cointegral-1}
  The bijection $\TwTr(\Sigma) \cong \HH_0(\Sigma)^*$ restricts to a bijection between the following two sets:
  \begin{enumerate}
  \item The space of $\Sigma$-twisted module-compatible traces on $\lproj{A}$.
  \item The subspace of $\HH_0(\Sigma)^*$ consisting of all elements $\lambda$ satisfying
    \begin{equation}
      \label{eq:H-eq-bimod-cointegral}
      \lambda(s_{(0)}) s_{(1)} = \lambda(s) 1_H
      \quad (s \in \Sigma).
    \end{equation}
  \end{enumerate}
\end{theorem}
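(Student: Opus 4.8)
The plan is to pass, via the machinery of Sections~\ref{sec:cat-theo-refo} and~\ref{sec:compati-module}, from the $\Sigma$-twisted trace to a natural transformation $\Sigma \to \Nak$, recognize module-compatibility as a morphism condition in $(\bimod{A^{(S^2)}}{A})^H$, and finally unwind that condition into the identity \eqref{eq:H-eq-bimod-cointegral}.

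First I recall the precise form of the bijection $\TwTr(\Sigma) \cong \HH_0(\Sigma)^*$: by the theorem of Subsection~\ref{subsec:cat-theo-reform}, the trace $\trace^{\lambda}_{\bullet}$ attached to $\lambda \in \HH_0(\Sigma)^*$ equals $\Phi_1(\xi)$, where $\xi = \lambda^{\natural} \otimes_A (-) : \Sigma \otimes_A (-) \to A^* \otimes_A (-) = \Nak$ is obtained from $\lambda^{\natural} \in \Hom_{A|A}(\Sigma, A^*)$ through \eqref{eq:HH0-and-bimodules} and the Eilenberg--Watts equivalence. By Theorem~\ref{thm:tw-tr-module-compati}, $\trace^{\lambda}_{\bullet}$ is module-compatible if and only if $\xi$ is a morphism in $\Rex_{\mathcal{C}}(\lmod{A}, (\lmod{A})_{\DLD})$. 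By Lemma~\ref{lem:H-eq-EW-2} this last category is equivalent, via the (fully faithful) equivariant Eilenberg--Watts equivalence \eqref{eq:H-eq-EW}, to $(\bimod{A^{(S^2)}}{A})^H$, and under this equivalence $\Nak$ corresponds to $A^*$ with the coaction $\delta_{A^*}$ of \eqref{eq:Naka-bimod-coaction-1} (Theorem~\ref{thm:Nakayama-as-equivariant-bimodule}) while $\xi$ corresponds to its underlying linear map $\lambda^{\natural}$. Hence $\trace^{\lambda}_{\bullet}$ is module-compatible precisely when $\lambda^{\natural} : \Sigma \to A^*$ is a morphism of $H$-equivariant $A^{(S^2)}$-$A$-bimodules. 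Since $A^{(S^2)} = A$ as algebras, $A^{(S^2)}$-$A$-bilinearity is just $A$-bilinearity, which holds automatically because $\lambda^{\natural} \in \Hom_{A|A}(\Sigma, A^*)$; thus the only extra requirement is that $\lambda^{\natural}$ be right $H$-colinear.

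Next I would unwind colinearity of $\lambda^{\natural}$. Two elements of $A^* \otimes_{\bfk} H$ coincide iff they agree after pairing the $A^*$-factor against every $a \in A$; using the characterization \eqref{eq:Naka-bimod-coaction-2} of $\delta_{A^*}$, the equation $\delta_{A^*}(\lambda^{\natural}(s)) = (\lambda^{\natural} \otimes \id_H)(\delta_{\Sigma}(s))$ becomes
\begin{equation*}
  \langle \lambda, a_{(0)} s \rangle \, S(a_{(1)}) = \langle \lambda, a \, s_{(0)} \rangle \, s_{(1)}
  \qquad (a \in A, \ s \in \Sigma).
\end{equation*}
It then remains to show that, for $\lambda \in \HH_0(\Sigma)^*$, this displayed identity is equivalent to \eqref{eq:H-eq-bimod-cointegral}. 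Setting $a = 1_A$ and using $\delta_A(1_A) = 1_A \otimes 1_H$ gives \eqref{eq:H-eq-bimod-cointegral} at once. For the converse I would apply \eqref{eq:H-eq-bimod-cointegral} to the element $s a \in \Sigma$; by the $H$-equivariance of $\Sigma$ (equation \eqref{eq:H-eq-bimod-twisted-1} with trivial left factor) one has $\delta_{\Sigma}(s a) = s_{(0)} a_{(0)} \otimes s_{(1)} a_{(1)}$, so that $\lambda(s_{(0)} a_{(0)}) \, s_{(1)} a_{(1)} = \lambda(s a) \, 1_H$ for all $a$. Applying this to the first leg of $\delta_A(a)$, multiplying on the right by the antipode of the remaining $H$-component, and then simplifying by coassociativity, the antipode axiom $h_{(1)} S(h_{(2)}) = \varepsilon(h) 1_H$, the counit axiom for the comodule $A$, and the cyclicity $\lambda(a s) = \lambda(s a)$, recovers the displayed identity. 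Combining the two directions shows that the bijection $\TwTr(\Sigma) \cong \HH_0(\Sigma)^*$ restricts to the asserted bijection.

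The conceptual steps are routine applications of results already established; the part requiring care is the last computation, where one must keep the Sweedler bookkeeping straight while juggling the two distinct comodule structures (on $A$ and on $\Sigma$) together with the $S^2$-twist built into $A^{(S^2)}$. I expect that to be the main obstacle, whereas the reduction to a statement about $\lambda^{\natural}$ and the identification of the colinearity condition with the displayed equation are direct.
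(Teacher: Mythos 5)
Your proposal is correct and follows essentially the same route as the paper: reduce, via $\Phi_1$, Theorem~\ref{thm:tw-tr-module-compati}, Lemma~\ref{lem:H-eq-EW-2} and Theorem~\ref{thm:Nakayama-as-equivariant-bimodule}, to the statement that module-compatibility of $\trace^{\lambda}_{\bullet}$ is equivalent to $H$-colinearity of $\lambda^{\natural}$, and then check that colinearity is equivalent to \eqref{eq:H-eq-bimod-cointegral}. The only difference is that the paper dismisses this last equivalence as a direct verification, whereas you carry it out explicitly (and correctly, using cyclicity of $\lambda$ and the right $A$-equivariance of $\delta_{\Sigma}$).
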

\begin{proof}
  \newcommand{\mTwTr}{\mathrm{Tr}_{\mathcal{C}}}
  For simplicity of notation, we denote by $\mTwTr(\Sigma)$ the space of $\Sigma$-twisted traces on $\lproj{A}$ compatible with the right $\mathcal{C}$-module structure. By our results, there are bijections
  \begin{equation}
    \TwTr(\Sigma)
    \xrightarrow[\cong]{\  \eqref{eq:V-valued-trace} \ }
    \HH_0(\Sigma)^*
    \xrightarrow[\cong]{\  \eqref{eq:HH0-and-bimodules} \ }
    \Hom_{A|A}(\Sigma, A^*)
    \xrightarrow[\cong]{\  \eqref{eq:EW-equivalence} \ }
    \Nat(\Sigma, \Nak_{\lmod{A}})
  \end{equation}
  and, under these bijections, the space $\mTwTr(\Sigma)$ corresponds to the space of module-compatible natural transformations from $\Sigma \to \Nak_{\lmod{A}}$.
  Thus the space $\mTwTr(\Sigma)$ is in bijection with the space
  \begin{equation*}
    \{ \lambda \in \HH_0(\Sigma)^* \mid \text{The map $\lambda^{\natural} : \Sigma \to A^*$ is $H$-colinear} \},
  \end{equation*}
  where $\lambda^{\natural}$ for $\lambda \in \Sigma^*$ is given by~\eqref{eq:lambda-nat}.
  In conclusion, to complete the proof, it suffices to show that $\lambda \in \HH_0(\Sigma)^*$ satisfies \eqref{eq:H-eq-bimod-cointegral} if and only if $\lambda^{\natural}$ is $H$-colinear. This can be verified directly. The proof is done.
\end{proof}

\subsection{Pivotal case}

A {\em pivotal element} of $H$ is a grouplike element $g \in H$ such that the equation $S^2(h) = g h g^{-1}$ holds for all $h \in H$. In this subsection, we assume that a pivotal element $g_{\piv} \in H$ is given. Thus $\mathcal{C} = \lmod{H}$ is pivotal with the pivotal structure $\mathfrak{p}$ defined by
\begin{equation*}
  \mathfrak{p}_X : X \to X^{**},
  \quad \langle \mathfrak{p}_X(x), x^* \rangle = \langle x^*, g_{\piv} \cdot x \rangle
  \quad (x \in X, x^* \in X^*)
\end{equation*}
for $X \in \mathcal{C}$.

Now let $A$ be a right $H$-comodule algebra. As in Subsection~\ref{subsec:FTC-pivotal-case}, we make $\id_{\lmod{A}}$ a $\DLD$-twisted right $\mathcal{C}$-module functor by the pivotal structure. By a {\em right modified trace} on $\lproj{A}$, we mean a module compatible $\Sigma$-twisted trace on $\lproj{A}$ with $\Sigma = \id_{\lmod{A}}$ ({\it cf}. Subsection~\ref{subsec:FTC-pivotal-case}).

\begin{theorem}
  \label{thm:mod-tr-cointegral-2}
  For a right $H$-comodule algebra $A$, there is a bijection between the following two spaces:
  \begin{enumerate}
  \item The space of right modified traces on $\lproj{A}$.
  \item The space of all linear maps $\lambda : A \to \bfk$ satisfying
    \begin{equation}
      \label{eq:g-sym-coint}
      \lambda(a b) = \lambda(b a), \quad
      \lambda(a_{(0)}) a_{(1)} = \lambda(a) g_{\piv}^{-1}
      \quad (a, b \in A).
    \end{equation}
  \end{enumerate}
\end{theorem}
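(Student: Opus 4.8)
The plan is to deduce the statement from Theorem~\ref{thm:mod-tr-cointegral-1} by identifying $\id_{\lmod{A}}$ --- viewed, as in Subsection~\ref{subsec:FTC-pivotal-case}, as a $\DLD$-twisted right $\mathcal{C}$-module functor through the pivotal structure $\mathfrak{p}$ --- with a concrete object of $(\bimod{A^{(S^2)}}{A})^H$ under the equivalence of Lemma~\ref{lem:H-eq-EW-2}. First I would unwind the pivotal structure: writing $\phi_X : X \to X^{**}$ for the canonical isomorphism of vector spaces, the defining formula $\langle \mathfrak{p}_X(x), x^* \rangle = \langle x^*, g_{\piv} \cdot x \rangle$ says exactly that $\mathfrak{p}_X(x) = \phi_X(g_{\piv} \cdot x)$, whence $\mathfrak{p}_X^{-1}(\phi_X(z)) = g_{\piv}^{-1} \cdot z$. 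Hence the $\DLD$-twisted module structure of $\id_{\lmod{A}}$ is
\begin{equation*}
  \Psi^{\id}_{M,X} = \id_M \catactr \mathfrak{p}_X^{-1} : M \catactr X^{**} \to M \catactr X,
  \quad m \otimes \phi_X(z) \mapsto m \otimes g_{\piv}^{-1} z .
\end{equation*}

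Next I would introduce the $A$-bimodule $\widehat{A}$ which equals $A$ as an $A$-bimodule and carries the right $H$-coaction $\delta_{\widehat{A}}(a) = a_{(0)} \otimes g_{\piv} a_{(1)}$, where $a_{(0)} \otimes a_{(1)} = \delta_A(a)$. Two checks are needed. First, $\widehat{A}$ is a legitimate object of $(\bimod{A^{(S^2)}}{A})^H$: counitality and coassociativity of $\delta_{\widehat{A}}$ are immediate from those of $\delta_A$ and the fact that $g_{\piv}$ is grouplike, while the equivariance equation \eqref{eq:H-eq-bimod-twisted-1} reduces, upon expanding both sides with the help of the identity $\delta_A(aa') = a_{(0)} a'_{(0)} \otimes a_{(1)} a'_{(1)}$, to $g_{\piv} h = S^2(h) g_{\piv}$ for all $h \in H$, which is precisely the defining property $S^2(h) = g_{\piv} h g_{\piv}^{-1}$ of the pivotal element. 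Second, $\widehat{A}$ corresponds to $\id_{\lmod{A}}$ under Lemma~\ref{lem:H-eq-EW-2}: the functor $\widehat{A} \otimes_A (-)$ is canonically identified with $\id_{\lmod{A}}$ via $a \otimes_A m \mapsto am$, and under this identification its oplax structure \eqref{eq:H-eq-bimod-twisted-2} becomes $m \otimes x \mapsto m \otimes \phi_X(g_{\piv} x)$ (evaluate \eqref{eq:H-eq-bimod-twisted-2} at $1_A \otimes_A (m \otimes x)$ and use $\delta_A(1_A) = 1_A \otimes 1_H$); its inverse is exactly the morphism $\Psi^{\id}_{M,X}$ computed above. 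Therefore $\widehat{A} \cong \id_{\lmod{A}}$ in $\Rex_{\mathcal{C}}(\lmod{A}, (\lmod{A})_{\DLD})$, so a right modified trace on $\lproj{A}$ is the same thing as a module-compatible $\widehat{A}$-twisted trace on $\lproj{A}$.

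Finally I would apply Theorem~\ref{thm:mod-tr-cointegral-1} to $\Sigma = \widehat{A}$ and translate. Since the underlying $A$-bimodule of $\widehat{A}$ is the regular one, $\HH_0(\widehat{A}) = A / \mathrm{span}_{\bfk}\{ab - ba \mid a, b \in A\}$, so $\HH_0(\widehat{A})^*$ is precisely the space of linear maps $\lambda : A \to \bfk$ with $\lambda(ab) = \lambda(ba)$, the first equation of \eqref{eq:g-sym-coint}. For such a $\lambda$, condition \eqref{eq:H-eq-bimod-cointegral} for the comodule $\widehat{A}$ reads $\lambda(a_{(0)})\, g_{\piv} a_{(1)} = \lambda(a)\, 1_H$ for all $a \in A$, and left-multiplying by $g_{\piv}^{-1}$ turns this into $\lambda(a_{(0)})\, a_{(1)} = \lambda(a)\, g_{\piv}^{-1}$, the second equation of \eqref{eq:g-sym-coint}. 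Thus the bijection $\TwTr(\widehat{A}) \cong \HH_0(\widehat{A})^*$ restricts to the asserted bijection. I expect the main obstacle to be the bookkeeping in the identification $\widehat{A} \cong \id_{\lmod{A}}$ --- in particular tracking which of $g_{\piv}$, $g_{\piv}^{-1}$ appears where, and verifying that the oplax structure \eqref{eq:H-eq-bimod-twisted-2} and the pivotal structure $\id_M \catactr \mathfrak{p}_X^{-1}$ are mutually inverse --- whereas the remaining verifications are routine.
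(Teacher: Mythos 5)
Your proposal is correct and follows exactly the paper's route: the paper likewise takes $\Sigma = A$ with the twisted coaction $a \mapsto a_{(0)} \otimes g_{\piv} a_{(1)}$, identifies it under Lemma~\ref{lem:H-eq-EW-2} with $\id_{\lmod{A}}$ equipped with the pivotal $\DLD$-twisted structure, and applies Theorem~\ref{thm:mod-tr-cointegral-1}. The only difference is that you spell out the verifications (equivariance via $S^2(h) = g_{\piv} h g_{\piv}^{-1}$, and the matching of the oplax structure with $\id_M \catactr \mathfrak{p}_X^{-1}$) that the paper leaves implicit, and these are carried out correctly.
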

\begin{proof}
  The $A$-bimodule $\Sigma := A$ becomes an object of $(\bimod{A^{(S^2)}}{A})^H$ by the coaction
  \begin{equation*}
    \delta_{\Sigma} : \Sigma \to \Sigma \otimes_{\bfk} H, \quad a \mapsto a_{(0)} \otimes g_{\piv} a_{(1)} \quad (a \in \Sigma).
  \end{equation*}
  The object $\Sigma$ corresponds, through the equivalence of Lemma \ref{lem:H-eq-EW-2}, to the identity functor on $\lmod{A}$ regarded as a $\DLD$-twisted right $\mathcal{C}$-module functor by the pivotal structure associated to $g_{\piv}$. Thus the proof is done by applying Theorem \ref{thm:mod-tr-cointegral-1} to $\Sigma$.
\end{proof}

\begin{remark}
  For the case where $A = H$, a linear map $\lambda$ satisfying \eqref{eq:g-sym-coint} is precisely a symmetrized right cointegral on $H$ in the sense of \cite{2018arXiv180100321B} and therefore this theorem specializes to the description of right modified traces on $\mathcal{C}$ given in \cite{2018arXiv180100321B}.
\end{remark}

\begin{remark}
  Let $g \in H$ be a grouplike element. A $g$-cointegral on $A$ \cite{2018arXiv181007114K} is a linear map $\lambda : A \to \bfk$ satisfying $\lambda(a_{(0)}) a_{(1)} = \lambda(a) g$ for all elements $a \in A$ (properties and examples of such `grouplike-cointegrals' are studied in \cite{2018arXiv181007114K,2019arXiv190400376S}). Theorem~\ref{thm:mod-tr-cointegral-2} implies that a modified trace on $\lmod{A}$ exists only if there is a non-zero $g_{\piv}^{-1}$-cointegral on $A$, but the converse does not hold as we will see in Example~\ref{ex:Taft-alg}.
\end{remark}

\begin{remark}
  By Theorem \ref{thm:mod-tr-cointegral-2}, the algebra $A$ is symmetric Frobenius if there is a non-degenerate modified trace on $\lproj{A}$. The converse does not hold in general; see Example~\ref{ex:book-Hopf-alg}.
\end{remark}

\subsection{Examples}

We close this paper by examining the existence of a non-zero right modified trace for some concrete examples. We fix an integer $N > 1$ and assume that the base field $\bfk$ has a primitive $N$-th root $\omega$ of unity.

\begin{example}
  \label{ex:Taft-alg}
  The Taft algebra $T_{\omega}$ is generated by $x$ and $g$ subject to the relations
  $g^N = 1$,
  $g x = \omega x g$ and
  $x^N = 0$.
  The Taft algebra $H := T_{\omega}$ has the unique Hopf algebra structure determined by $\Delta(g) = g \otimes g$ and $\Delta(x) = x \otimes g + 1 \otimes x$. The antipode is the anti-algebra automorphism given by $S(g) = g^{-1}$ and $S(x) = - x g^{-1}$. The Hopf algebra $H$ has a unique pivotal element $g_{\piv} := g$.

  Given a divisor $d$ of $N$, we define $A(d)$ to be the subalgebra of $H$ generated by $x$ and $g^{m}$, where $m = N/d$. It is easy to see that $A(d)$ is a right coideal subalgebra of $H$ and, in particular, it is a right $H$-comodule algebra by the comultiplication of $H$.

  The set $\{ x^i g^{m j} \mid i = 0, \cdots, N - 1; j = 0, \cdots, d - 1 \}$ is a basis of the vector space $A(d)$. With respect to this basis, we define the linear map $\lambda : A(d) \to \bfk$ by $\lambda(x^i g^{m j}) = \delta_{i, N - 1} \delta_{j,0}$. By the same way as \cite{2018arXiv181007114K} and \cite[Subsection 5.1]{2019arXiv190400376S}, we see that $\lambda$ is a $g_{\piv}^{-1}$-cointegral on $A(d)$ and every non-zero $g_{\piv}^{-1}$-cointegral on $A$ is a scalar multiple of $\lambda$.

  If $d = 1$, then the pairing on $A(d)$ associated to $\lambda$ is symmetric and non-degenerate. Thus, by Theorem~\ref{thm:mod-tr-cointegral-2}, there is a non-degenerate right modified trace on $\lproj{A(d)}$. On the other hand, if $d > 1$, then $a = g^{m}$ and $b = x g^{-m}$ does not satisfy $\lambda(a b) = \lambda(b a)$. Thus, by the theorem, we conclude that there are no non-zero right modified traces on $\lproj{A(d)}$ in this case.
\end{example}

\begin{example}
  \label{ex:book-Hopf-alg}
  We consider the Hopf algebra $H$ defined as follows: As an algebra, it is generated by $x$, $y$ and $g$ subject to the relations
  \begin{equation*}
    g^N = 1,
    \quad g x = \omega x g,
    \quad g y = \omega^{-1} y g,
    \quad y x = \omega x y
    \quad \text{and}
    \quad x^N = y^N = 0.
  \end{equation*}
  The Hopf algebra structure of $H$ is determined by
  \begin{equation*}
    \Delta(g) = g \otimes g,
    \quad \Delta(x) = x \otimes g + 1 \otimes x,
    \quad \Delta(y) = y \otimes g + 1 \otimes y.
  \end{equation*}
  The antipode is given by $S(g) = g^{-1}$, $S(x) = - x g^{-1}$ and $S(y) = - y g^{-1}$. It is easy to see that $g_{\piv} := g$ is a unique pivotal element of $H$.

  Given a divisor $d$ of $N$ and parameters $\xi, \mu \in \bfk$, we define $A(d; \xi, \mu)$ to be the right $H$-comodule algebra generated, as an algebra, by $G$, $X$ and $Y$ subject to the relations
  \begin{gather*}
    G^{d} = 1, \quad X^N = \xi 1, \quad Y^N = \mu 1, \\
    G X = \omega^{m} X G, \quad G Y = \omega^{-m} Y G, \quad Y X = \omega X Y.
  \end{gather*}
  The right $H$-comodule structure $\delta : A \to A \otimes H$ is determined by
  \begin{equation*}
    \delta(X) = X \otimes g + 1 \otimes x,
    \quad \delta(Y) = Y \otimes g + 1 \otimes y, \\
    \quad \delta(G) = G \otimes g^{m},
  \end{equation*}
  where $m = N/d$. This example is taken from \cite[Subsection 8.3]{MR2678630}, where indecomposable exact left comodule algebras over $H$ are classified up to equivariant Morita equivalence assuming that the base field is an algebraically closed field of characteristic zero.

  For simplicity, we write $A = A(d; \xi, \mu)$.
  We determine when $\lmod{A}$ admits a right non-zero modified trace.
  We first classify non-zero grouplike-cointegrals on $A$.
  The set
  $\{ X^r Y^s G^t \mid r, s = 0, \cdots, N - 1; t = 0, \cdots, d - 1 \}$
  is a basis of $A$. For $u \in \mathbb{Z}/d\mathbb{Z}$, we define the linear map $\lambda_u : A \to \bfk$ by
  \begin{equation*}
    \lambda_u(X^r Y^s G^t) = \delta_{r,N-1} \delta_{s,N-1} \delta_{t,u}
    \quad (r, s = 0, \cdots, N - 1; t = 0, \cdots, d - 1).
  \end{equation*}
  One can directly check that $\lambda_u$ is a Frobenius form on $A$ and the associated Nakayama automorphism $\nu_u$ is given by
  $\nu_u(X) = \omega^{m u - 1} X$,
  $\nu_u(Y) = \omega^{- m u + 1} Y$
  and $\nu_u(G) = G$.

  By the same way as \cite[Subsection 5.2]{2019arXiv190400376S}, we see that $\lambda_u$ is a $g^{m u - 2}$-cointegral on $A$ and every grouplike-cointegral on $A$ is a scalar multiple of $\lambda_u$ for some $u$. We note that the equation $g^{m u - 2} = g_{\piv}^{-1}$ holds if and only if $m u \equiv 1 \pmod{N}$. Thus a non-zero $g_{\piv}^{-1}$-cointegral on $A$ exists if and only if $m u \equiv 1 \pmod{N}$ for some $u$. Since $m = N/d$ is a divisor of $N$, this happens precisely if $d = N$.

  The above discussion and Theorem~\ref{thm:mod-tr-cointegral-2} show that there is a non-zero right modified trace on $\lproj{A}$ only if $d = N$. Now we suppose $d = N$.
  Then $\lambda_1$ is a $g_{\piv}^{-1}$-cointegral on $A$.
  Furthermore, the pairing on $A$ induced by $\lambda_1$ is symmetric and non-degenerate.
  Thus, by Theorem~\ref{thm:mod-tr-cointegral-2}, there is a non-degenerate right modified trace on $\lproj{A}$.

  Finally, we mention the case where $d < N$ and $\xi \mu \ne 0$. The elements $X$ and $Y$ are invertible in this case and $\nu_u$ is the inner automorphism implemented by $G^u X Y$. Hence, as a functor, $\id_{\lmod{A}}$ is isomorphic to the Nakayama functor $\Nak_{\lmod{A}}$.
  On the other hand, as we have seen in the above, there are no non-zero right modified traces on $\lproj{A}$.
  This implies that $\id_{\lmod{A}}$ is {\em not} isomorphic to $\Nak_{\lmod{A}}$ as a $\DLD$-twisted $\mathcal{C}$-module functor.
\end{example}


\begin{thebibliography}{GKPM13}

\bibitem{MR2331768}
N.~Andruskiewitsch and J.~M.~Mombelli.
\newblock On module categories over finite-dimensional {H}opf algebras.
\newblock {\em J. Algebra}, 314(1):383--418, 2007.

\bibitem{2018arXiv180100321B}
A.~{Beliakova}, C.~{Blanchet}, and A.~M.~{Gainutdinov}.
\newblock Modified trace is a symmetrised integral.
\newblock {\em Selecta Math. (N.S.)}, 27(3), 51, 2021.

\bibitem{MR3910068}
A.~Beliakova, K.~K.~Putyra, and S.~M.~Wehrli.
\newblock Quantum link homology via trace functor {I}.
\newblock {\em Invent. Math.}, 215(2):383--492, 2019.

\bibitem{MR2869176}
A.~Brugui{\`e}res and A.~Virelizier.
\newblock Quantum double of {H}opf monads and categorical centers.
\newblock {\em Trans. Amer. Math. Soc.}, 364(3):1225--1279, 2012.

\bibitem{MR3537815}
S.~D\u{a}sc\u{a}lescu, C.~N\u{a}st\u{a}sescu, and L.~N\u{a}st\u{a}sescu.
\newblock Symmetric algebras in categories of corepresentations and smash products.
\newblock {\em J. Algebra}, 465:62--80, 2016.

\bibitem{MR3934626}
C.~L.~Douglas, C.~Schommer-Pries, and N.~Snyder.
\newblock The balanced tensor product of module categories.
\newblock {\em Kyoto J. Math.}, 59(1):167--179, 2019.

\bibitem{2013arXiv1312.7188D}
C.~L.~{Douglas}, C.~{Schommer-Pries}, and N.~{Snyder}.
\newblock Dualizable tensor categories.
\newblock {\em Amer. Math. Soc.} 268, no. 1308, vii+88, 2020.

\bibitem{MR3242743}
P.~Etingof, S.~Gelaki, D.~Nikshych, and V.~Ostrik.
\newblock Tensor categories,
volume 205 of {\em Mathematical Surveys and Monographs}.
\newblock American Mathematical Society, Providence, RI, 2015.

\bibitem{MR2097289}
P.~Etingof, D.~Nikshych, and V.~Ostrik.
\newblock An analogue of {R}adford's {$S^4$} formula for finite tensor categories.
\newblock {\em Int. Math. Res. Not.}, (54):2915--2933, 2004.

\bibitem{MR2119143}
P.~Etingof and V.~Ostrik.
\newblock Finite tensor categories.
\newblock {\em Mosc. Math. J.}, 4(3):627--654, 782--783, 2004.

\bibitem{2018arXiv180901122F}
A.~F.~{Fontalvo Orozco} and A.~M.~{Gainutdinov}.
\newblock {Module traces and Hopf group-coalgebras}.
\newblock {\tt arXiv:1809.01122}, 2018.

\bibitem{MR4042867}
J.~Fuchs, G.~Schaumann, and C.~Schweigert.
\newblock Eilenberg-{W}atts calculus for finite categories and a bimodule
  {R}adford {$S^4$} theorem.
\newblock {\em Trans. Amer. Math. Soc.}, 373(1):1--40, 2020.

\bibitem{2018arXiv180900499G}
N.~{Geer}, J.~{Kujawa}, and B.~{Patureau-Mirand}.
\newblock {M-traces in (Non-Unimodular) Pivotal Categories}.
\newblock {\em Algebr Represent Theor}, 2021. 

\bibitem{MR2803849}
N.~Geer, J.~Kujawa, and B.~Patureau-Mirand.
\newblock {Generalized trace and modified dimension functions on ribbon categories}.
\newblock {\em Selecta Math. (N.S.)}, 17(2):453--504, 2011.

\bibitem{MR3068949}
N.~Geer, J.~Kujawa, and B.~Patureau-Mirand.
\newblock {Ambidextrous objects and trace functions for nonsemisimple categories}.
\newblock {\em Proc. Amer. Math. Soc.}, 141(9):2963--2978, 2013.

\bibitem{MR1466618}
R.~Gordon and A.~J.~Power.
\newblock Enrichment through variation.
\newblock {\em J. Pure Appl. Algebra}, 120(2):167--185, 1997.

\bibitem{MR2480500}
N.~Geer, B.~Patureau-Mirand, and V.~Turaev.
\newblock Modified quantum dimensions and re-normalized link invariants.
\newblock {\em Compos. Math.}, 145(1):196--212, 2009.

\bibitem{MR2998839}
N.~Geer, B.~Patureau-Mirand, and A.~Virelizier.
\newblock Traces on ideals in pivotal categories.
\newblock {\em Quantum Topol.}, 4(1):91--124, 2013.

\bibitem{MR4079628}
A.~M.~Gainutdinov and I.~Runkel.
\newblock Projective objects and the modified trace in factorisable finite
  tensor categories.
\newblock {\em Compos. Math.}, 156(4):770--821, 2020.

\bibitem{MR2794862}
J. Greenough.
\newblock Bimodule categories and monoidal 2-structure.
\newblock {\em Thesis (Ph.D.) University of New Hampshire}, 137 pp., 2010.

\bibitem{MR175950}
A.~Hattori.
\newblock Rank element of a projective module.
\newblock {\em Nagoya Math. J.}, 25:113--120, 1965.

\bibitem{MR1321145}
C.~Kassel.
\newblock {Quantum groups}, volume 155 of {\em Graduate Texts in
  Mathematics}.
\newblock Springer-Verlag, New York, 1995.

\bibitem{2018arXiv181007114K}
P.~{Kasprzak}.
\newblock {Generalized (co)integrals on coideal subalgebras}.
\newblock {\tt arXiv:1810.07114}, 2018.

\bibitem{MR1862634}
T.~Kerler and V.~V.~Lyubashenko.
\newblock {Non-semisimple topological quantum field theories for
  3-manifolds with corners}, volume 1765 of {\em Lecture Notes in Mathematics}.
\newblock Springer-Verlag, Berlin, 2001.

\bibitem{MR1653294}
T.~Y.~Lam.
\newblock {Lectures on modules and rings}, volume 189 of {\em Graduate Texts in Mathematics}.
\newblock Springer-Verlag, New York, 1999.

\bibitem{MR1712872}
S.~Mac~Lane.
\newblock {Categories for the working mathematician}, volume~5 of {\em
  Graduate Texts in Mathematics}.
\newblock Springer-Verlag, New York, second edition, 1998.

\bibitem{MR2678630}
M.~Mombelli.
\newblock Module categories over pointed {H}opf algebras.
\newblock {\em Math. Z.}, 266(2):319--344, 2010.

\bibitem{arXiv:2103.15772}
C.~Schweigert, L.~Woike.
\newblock {The Trace Field Theory of a Finite Tensor Category}.
\newblock {\tt arXiv:2103.15772}, 2021.

\bibitem{MR3314297}
K.~Shimizu.
\newblock The pivotal cover and {F}robenius-{S}chur indicators.
\newblock {\em J. Algebra}, 428:357--402, 2015.

\bibitem{2017arXiv170709691S}
K.~{Shimizu}.
\newblock {Ribbon structures of the Drinfeld center}.
\newblock {\tt arXiv:1707.09691v3}, 2017.

\bibitem{MR3996323}
K.~Shimizu.
\newblock Non-degeneracy conditions for braided finite tensor categories.
\newblock {\em Adv. Math.}, 355:106778, 36, 2019.

\bibitem{2019arXiv190400376S}
K.~{Shimizu}.
\newblock {Relative Serre functor for comodule algebras}.
\newblock {\tt arXiv:1904.00376}, 2019.

\bibitem{MR202807}
J.~Stallings.
\newblock Centerless groups---an algebraic formulation of {G}ottlieb's theorem.
\newblock {\em Topology}, 4:129--134, 1965.
\end{thebibliography}
\def\cprime{$'$}

\end{document}